\definecolor{mygreen}{RGB}{13, 110, 53}
\definecolor{Green}{RGB}{0, 128, 0}
\definecolor{navyblue}{RGB}{0, 0, 128}
\definecolor{MyMulberry}{RGB}{197, 75, 140}
\author{Mattia Freguglia, Andrea Malchiodi and Francesco Malizia \thanks{Scuola Normale Superiore, Piazza dei Cavalieri 7, 56126 Pisa. e-mails: mattia.freguglia@sns.it, andrea.malchiodi@sns.it, francesco.malizia@sns.it}}
\title{Min-max theory and Yamabe metrics on conical four-manifolds}
\date{}
\begin{document}

\maketitle

{\footnotesize
\begin{abstract}

\noindent We prove existence of Yamabe metrics on 
 four-manifolds possessing finitely-many conical points 
with $\Z_2$-group, using for the first time a min-max scheme 
in the singular setting. In our variational argument we need to deform continuously regular bubbles into singular ones, while 
keeping the Yamabe energy sufficiently low.
% \colorbox{yellow}{(2)} This requires us to employ recent positive mass theorems in the conical setting, as well as to study the divergence rate of the mass, associated to the conformal blow-up of our singular manifold, when the blow-up point approaches the singular set.
% \colorbox{yellow}{(3)} 
For doing this, we exploit recent positive mass theorems in the conical setting and study how the mass of the conformal blow-up diverges as the blow-up point approaches the singular set.

% performing conformal blow-ups, the mass 

% diverges at an inverse-quadratically from the distance to the singular set 

% when the 

% . Along the deformation into conical bubbles 
% we 
% need to understand the behavior of the Green's function of the conformal Laplacian 
% when the pole approaches a conical point, providing 
% an asymptotics for the mass depending on the distance from it. 

\vspace{3ex}

\noindent{\it Key Words:} Singular Yamabe problem, Conical Metrics, 
Conformal Geometry, Min-max methods. 

\vspace{2ex}

\noindent{\bf MSC 2020:} 53C18, 53C21, 58J60, 35J20.

\end{abstract}}

%\begin{spacing}{0.75}
%    {\tableofcontents}
%\end{spacing}
\tableofcontents

\section{Introduction}

In this paper we consider the Yamabe problem, posed in  \cite{Yamabe-60} and consisting in conformally deforming  the metric of a  manifold $(M^n,g)$, $n \geq 3$,  to obtain constant scalar curvature. 
If $\tilde{g} = w^{\frac{4}{n-2}} g$ is a conformal metric, then the scalar curvature $R_{\tilde{g}}$ of $\tilde{g}$ 
is determined by the formula 
\begin{equation}\label{eq:transf}
     L_g w = R_{\tilde{g}} w^{\frac{n+2}{n-2}},  
\end{equation}
where $L_g$ denotes the {\em conformal Laplacian} 
\[
  L_g w = - a \Delta_g w + R_g w, \qquad a = \frac{4(n-1)}{n-2}. 
\]
By equation \eqref{eq:transf}, the Yamabe problem 
is equivalent to solving 
\begin{equation}\tag{$\mathcal{Y}$} \label{eq:Y}
	L_g u = \bar{R} \, u^{\frac{n+2}{n-2}}, \qquad \bar{R} \in \R.
\end{equation}
On closed manifolds the equation has a  variational formulation: working in the  functional space  
$H^1(M) := \big\{ u \colon M \to \R \, | \, u, \nabla u \in L^2 \big\}$, 
solutions are extremals of the {\em Yamabe energy} 
	\begin{equation*}
		Q_g (u) := \frac{\int_M \big( a \abs{\nabla_g u}^2 + R_g u^2 \big) \, d\mu_g }{\big( \int_M \abs{u}^{\frac{2n}{n-2}} \, d\mu_g \big)^{\frac{n-2}{n}}}, 
		\qquad \quad u \in H^1(M). 
	\end{equation*}
We call {\em Yamabe metrics} conformal metrics associated to any critical 
point of $Q_g$. A first natural attempt to find such critical points is 
to try minimizing this quantity, considering the 
{\em Yamabe constant}
\[
   \mathcal{Y}(M, g) := \inf_{\substack{{u \in C^{\infty}(M)} \\ u \neq 0}} Q_g(u). 
\]
Since the conformal Laplacian satisfies 
\begin{equation*}
    L_{\tilde{g}} \phi  = w^{- \frac{n+2}{n-2}} L_g (w \, \phi), \qquad \phi \in C^\infty(M), 
\end{equation*} 
the Yamabe constant only depends on the conformal class $[g]$ of $g$, and 
will be denoted by $\mathcal{Y}(M,[g])$. 

However, due to the noncompactness of the embedding $H^1(M) \hookrightarrow L^{\frac{2n}{n-2}}(M)$, minimizing sequences might develop a bubbling
behavior. Despite this difficulty, the Yamabe problem was completely solved:
first, in \cite{Trudinger-68} it was shown the existence of $\varepsilon_n > 0$ such that
$\mathcal{Y}(M,[g])$ is attained whenever $\mathcal{Y}(M,[g]) < \eps_n$. This
was sharpened in \cite{Aubin-76-2}, proving that $\mathcal{Y}(M,[g]) $ is achieved provided that  
\begin{equation}\label{eq:Y<}
  \mathcal{Y}(M,[g]) < \mathcal{Y}(\Sp^n,[g_{\Sp^n}]), 	
\end{equation}
with $g_{\Sp^n}$ the round metric on the sphere; it was shown that \eqref{eq:Y<} holds when $n \geq 6$ and $g$ is not locally conformally flat, expanding $Q_g$ on functions of the type (in normal 
coordinates at a point where the Weyl tensor does not vanish) 
${U}_\eps(x) 
\cong \eps^{-\frac{n-2}{2}} U(x/\eps)$, for $\eps$ small, 
where $U$ (see \eqref{eq:std-norm-bub})  
is the extremal of the Sobolev inequality in $\R^n$, see also \cite{Tal}. 
When $n \leq 5$ or when $g$ is 
locally conformally flat, and $M$ is not the sphere, strict inequality was proved in~\cite{Schoen-84} 
using the  {Positive Mass Theorem} from~\cite{Schoen-Yau-79},~\cite{Schoen-Yau-81} and~\cite{Schoen-Yau-88}.

\

In this paper we study the Yamabe problem on singular manifolds: in particular on four-manifolds with 
finitely-many $\Z_2$-conical points. 
Singular structures arise naturally when considering Gromov-Hausdorff limits of 
smooth manifolds, such as non-collapsing Einstein or 
\emph{critical} metrics (\cite{And}, \cite{bando-kasue-nakajima-1989-Inventiones}, \cite{Ti-Vi-1}, \cite{Ti-Vi-2}), 
in which orbifold points can form. Isolated singularities might also appear in the extremization of the Yamabe energy with respect to the conformal class $[{g}]$ 
(see \cite{Ak-94} and \cite{Ak-96}), while other types of stratified 
singularities can be introduced to analyze  K\"ahler-Einstein metrics, see 
e.g.~\cite{JMR} and references therein.

Before stating our main result, we first recall some facts concerning the
Yamabe problem in the singular setting, which presents new issues 
compared to the classical case. For example, it was proved in \cite{Ak-Mon} that on $\Sp^n$ with an equatorial conical wedge of codimension $2$ and of angle $\alpha \geq 4 \pi$, 
the Yamabe constant is not attained. In 
\cite{Vi} there are even examples of four-manifolds with orbifold points (indeed,  
conformal compactifications of hyperk\"ahler ALE manifolds) for which the Yamabe equation is not  solvable.

All the above examples fall into the category of \emph{stratified spaces} considered in
\cite[Section~2.1]{ACM}, and for such spaces  it is possible to give a criterion, that we now describe, for the attainment of the Yamabe constant. As for the regular case,  one defines 
\begin{equation*}
    \mathcal{Y}(M,[g])
    :=
    \inf_{\substack{{u \in W^{1,2}(M)} \\ u \neq 0}} Q_{g}(u)
    =
    \inf_{\substack{{u \in W^{1,2}(M)} \\ u \neq 0}} \frac{\int_{\Omega} \big( a \abs{\nabla_{g} u}^2 + R_{g} u^2 \big) \, d\mu_{g} }{\big( \int_{\Omega} \abs{u}^{\frac{2n}{n-2}} \, d\mu_{g} \big)^{\frac{n-2}{n}}}, 
\end{equation*}
where $\Omega$ is the \emph{regular part} of $M$ 
and $W^{1,2}(M)$ denotes the space obtained as the closure of Lipschitz functions with respect to the $W^{1,2}$-norm. For $P \in M$, we recall the notion of {\em local Yamabe constant} \begin{equation*}
  \mathcal{Y}_P := \lim_{r \to 0^+} \inf \big\{ Q_{g}(u) : u \in W^{1,2}_0(B_r(P)) \big\}, 
\end{equation*}
which is related to the minimal blow-up Yamabe energy at the point $P$. 
For example,  if $P$ is a conical point with link $(Y, h_0)$, see~\eqref{eq:diffeo} below, then the local Yamabe constant $\mathcal{Y}_P$ coincides with that of the scaling-invariant  metric
$ds^2 + s^2 h_0$ on $(0,+\infty) \times Y$,  
% is upper bounded by $\mathcal{Y}(\Sp^n,[g_{\Sp^n}])$
which is sometimes explicitly known. 
When $Y = \Sp^{n-1} / \Gamma$, then $\mathcal{Y}_P = k^{-\frac{2}{n}} \mathcal{Y}(\Sp^n,[g_{\Sp^n}])$, with $k$  the cardinality of the  isometry group
$\Gamma$, see \cite{Ak-orb}, and if $(Y,h_0)$ is an Einstein manifold with 
$\mathrm{Ric}(h_0) = (n-2) h_0$, one has that $\mathcal{Y}_P = ( \mathrm{Vol}_{h_0} (Y) / \mathrm{Vol}_{g_{\Sp^{n-1}}}(\Sp^{n-1}))^{\frac{2}{n}} \mathcal{Y}(\Sp^n,[g_{\Sp^n}])$, see 
Corollary~1.3 in~\cite{Petean}. The local Yamabe constant is also known for 
wedge-type singularities, see \cite{Mond-17}.

In analogy 
with the results in \cite{Aubin-76-2}, in \cite{Ak-Bo} and \cite{ACM} it was shown that  the  Yamabe equation is solvable under the condition 
\begin{equation} \label{eq:sing-quot}
  \mathcal{Y}(M,[g]) < \Y_S := \min_{Q \in M} \mathcal{Y}_Q. 
\end{equation}
This inequality was verified in \cite{Vi} in some special cases
for connected sums of $\mathbb{CP}^2$'s, and in \cite{freguglia-malchiodi-2024-pre} for
conical manifolds with strictly stable Einstein links in
dimension $n \geq 4$ under the conditions~\eqref{eq:diffeo} and~\eqref{s:xi-conf} described below. 

\bigskip

The purpose of this paper is to produce, for the first time 
to our knowledge, a min-max scheme for the Yamabe problem in the 
singular setting, and to apply it to find new 
existence results on singular four-manifolds possessing 
finitely-many conical points with $\Z_2$-group. As it will 
be clarified below, such conditions imply that the limiting Yamabe energy of 
two singular bubbles  coincides with that of one regular bubble, which will 
be useful to derive compactness estimates. 

\smallskip

Let us first precisely describe the objects we deal with: we consider compact metric spaces $(M,d)$ such that there exist a finite number of points $\{ P_1, \dots , P_l \}$ and a Riemannian metric $g$ on $M \setminus \{ P_1, \dots , P_l \}$ that induces the same distance $d$.

In addition, we ask that for every point $P_i$ there exists an open neighborhood $\mathcal{U}_i$ of $P_i$ and a diffeomorphism $\sigma_i$ such that
\begin{equation}  \label{eq:diffeo} \tag{$H_{P}$}
    \sigma_i \colon \mathcal{U}_i \setminus \{ P_i \} \to (0,1) \times Y_i, \quad 
    \hbox{and} \quad (\sigma_i)_{*} g = ds^2 + s^2 h_i(s),
\end{equation}
where $h_i(s)$ is a family of metrics on the smooth closed manifold $Y_i$, regular \underline{up to $s=0$}. Moreover, $(Y_i, h_{i}(0))$ is called the \emph{link} over the \emph{conical} point $P_i$, and we will use the notation $h_{i,0} = h_i(0)$.

\smallskip

Therefore, the metric ball of radius $s$ around 
a conical point $P$ is of the type  
\[
    B_s(P) = \Big( [0,s) \times Y \Big) \Big|_\sim,
\]
namely a topological cylinder collapsed on one side. 
Our main result is as follows: 
 
\begin{theorem}\label{t:main}
 	 Let $(M, g)$ be a closed four-manifold  
 	   with finitely-many conical points $\{P_1, \dots, P_l\}$ 
 	  such that \eqref{eq:diffeo} holds with $Y_i = \Sp^3/\Z_2$ for all $i$.  
 	 Then, if $l \geq 2$, $(M,g)$ admits a Yamabe metric. 
 \end{theorem}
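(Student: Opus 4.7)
We start from the dichotomy suggested by \eqref{eq:sing-quot}: if $\mathcal{Y}(M,[g]) < \Y_S$, the infimum defining the singular Yamabe constant is attained by \cite{Ak-Bo, ACM} and we are done. Hence we may assume
\[
    \mathcal{Y}(M,[g]) \;=\; \Y_S \;=\; 2^{-1/2}\,\mathcal{Y}(\Sp^4,[g_{\Sp^4}]),
\]
the last equality following from $Y_i = \Sp^3/\Z_2$ and the Akutagawa orbifold formula. In this regime minimizing sequences concentrate at a single conical point and yield no nontrivial solution by direct minimization; we instead produce a second, non-minimizing critical point of $Q_g$ by a min-max scheme. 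The crucial numerical coincidence is
\[
   2\,\Y_S^{\,2} \;=\; \mathcal{Y}(\Sp^4,[g_{\Sp^4}])^{\,2},
\]
so that two conical bubbles carry the same limiting Yamabe energy as a single regular bubble. This makes $\mathcal{Y}(\Sp^4,[g_{\Sp^4}])$ the first bubbling threshold above $\Y_S$, and our target is a min-max critical level $c \in \bigl( \Y_S ,\ \mathcal{Y}(\Sp^4,[g_{\Sp^4}]) \bigr)$.

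\medskip

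The min-max family is parametrized by the position of a concentration point $\gamma(\tau)\in M$, $\tau \in [0,1]$, moving along a continuous path with $\gamma(0)=P_1$ and $\gamma(1)=P_2$. Near $\tau = 0, 1$ the test function $u_\tau$ is a rescaled \emph{conical} bubble at $P_i$, while for intermediate $\tau$, when $\gamma(\tau)$ is a regular point, $u_\tau$ is an \emph{Aubin} bubble $U_{\eps(\tau)}$. The standard expansion of the Yamabe energy along a conformal blow-up at a regular point $Q$ reads
\[
   Q_g(U_\eps) \;=\; \mathcal{Y}(\Sp^4,[g_{\Sp^4}]) \bigl( 1 - c_0\, m(Q)\, \eps^{2} + o(\eps^{2}) \bigr),
\]
where $m(Q)$ denotes the conical ADM mass of the blow-up. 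The positive mass theorem in the four-dimensional $\Z_2$-conical setting gives $m(Q)>0$, and -- a central contribution of the paper -- the mass $m(Q)$ \emph{diverges} as $Q$ approaches the singular set. This divergence is precisely what allows us to tune $\eps(\tau)$ jointly with $\gamma(\tau)$ so that $Q_g(u_\tau) < \mathcal{Y}(\Sp^4,[g_{\Sp^4}])$ uniformly in $\tau$, with a continuous crossover between the regular-bubble and conical-bubble regimes inside a small neighbourhood of each $P_i$.

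\medskip

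Once the family is built, a min-max argument -- the two endpoints $u_0,u_1$ concentrate at distinct conical points and hence lie in topologically separated components of a suitable low sub-level set of $Q_g$, detected by a barycenter-type obstruction, while any continuous path between them must cross strictly above $\Y_S$ -- produces a critical value $c \in \bigl( \Y_S ,\ \mathcal{Y}(\Sp^4,[g_{\Sp^4}]) \bigr)$. Concentration-compactness in the singular setting then gives compactness of Palais--Smale sequences at level $c$: were the weak limit to vanish, the energy would split into bubbles, each of energy $\Y_S$ at a conical point or $\mathcal{Y}(\Sp^4,[g_{\Sp^4}])$ at a regular point, and no admissible sum of these values lies in the open interval $\bigl( \Y_S ,\ \mathcal{Y}(\Sp^4,[g_{\Sp^4}]) \bigr)$. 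Hence the weak limit is a nontrivial positive solution of \eqref{eq:Y}, yielding the desired Yamabe metric. The principal technical obstacle, and the heart of the paper, is the quantitative control of the conical mass $m(Q)$ near the singular set together with the construction of a continuous interpolating family that keeps $Q_g(u_\tau)$ strictly below $\mathcal{Y}(\Sp^4,[g_{\Sp^4}])$; this is exactly where the recent positive mass theorems in the conical setting are indispensable.
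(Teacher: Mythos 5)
Your proposal follows essentially the same route as the paper: the dichotomy via \eqref{eq:sing-quot}, a min-max between bubbles concentrated at two conical points with target level in $\left(\tfrac{\sqrt2}{2}\Y_4,\Y_4\right)$, a competitor path of regular bubbles glued to Green's functions controlled by the Dai--Sun--Wang positive mass theorem, the divergence of the mass as the center approaches the cone (which the paper quantifies as $\sim d_g(q,P)^{-2}$) to interpolate continuously into singular bubbles, and the quantization $2\Y_S^2=\Y_4^2$ to rule out blow-up of the Palais--Smale sequence at the min-max level. The only small caveat is that at the threshold $\mathcal{Y}(M,[g])=\Y_S$ the infimum may still be attained (e.g.\ the football $\Sp^4/\Z_2$), so one should assume non-attainment -- being otherwise done -- rather than assert that direct minimization fails; with that adjustment your outline coincides with the paper's argument.
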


Before discussing the proof, some comments are in order.

\begin{remark} (\textcolor{blue}{a}\label{rmk:a1}) 
In \cite{LeB-88} some examples of ALE manifolds with {\em negative mass} 
were given, which  become conical after conformal compactification. 
This may lead to the failure of \eqref{eq:sing-quot}, 
as it happens for  the non-existence example in \cite{Vi}. Therefore, our  
condition $l \geq 2$ is necessary for solving \eqref{eq:Y}. 

(\textcolor{blue}{b}) In \cite{freguglia-malchiodi-2024-pre} 
it was required that 
\begin{equation}\label{s:xi-conf} \tag{$\xi_P$}
	h'(0) \neq \nabla_{h_0}^2 f + f h_0, \quad \text{for all smooth $f \colon Y \to \R$,} 
\end{equation}
  to distinguish at first order 
 the metric $g$ from conformally deformed purely conical ones, see Proposition~\ref{l:Melrose}. 
 Here we make no such assumptions, which in particular allows us to deal  
 with the case $h'(0) = 0$, applying e.g. to {\em orbifold metrics}, namely those  that 
 smoothly locally lift to double covers.

 (\textcolor{blue}{c})  In the case when \eqref{eq:sing-quot} is not verified, 
 our result gives  an answer to Problem 5.6 in \cite{akutagawa-2021-sugaku-survey}. 
Indeed, by the fact that our links are of type $\Sp^3/\Z_2$, our (variational) solutions have globally Lipschitz gradient by the regularity results in~\cite{Bo-Pre} and~\cite{ACM}. 
However, when the local lift of $g$ admits a smooth extension, by the same 
argument of \cite[Theorem 3.1]{Ak-orb}, one can show that Yamabe  
metrics are also (smooth) orbifold metrics. 
 \end{remark}

We next describe the strategy and the main ingredients of our proof. 
As we remarked before, under the assumptions of  Theorem \ref{t:main} 
we have that the local Yamabe constant $\mathcal{Y}_P$ 
coincides with $\Y_4 := \mathcal{Y}(\Sp^4,[g_{\Sp^4}])$  if $P$ is 
regular, and with $\frac{\sqrt{2}}{2} \Y_4$ if 
$P$ is a conical point. If \eqref{eq:sing-quot} holds, 
then we have a minimizing Yamabe metric by the result in \cite{ACM}. 
We can therefore assume from now on that 
\begin{equation} \label{eq:no-att}
    \mathcal{Y}(M,[g]) = \frac{\sqrt{2}}{2} \Y_4 \ \text{and  it is not attained}.
\end{equation}
Consider next two singular (conical) points $P_1, P_2$, and 
two functions of the type (localized via cut-offs)
\[
U_{\eps, 1} = \eps^{-1} U(\eps^{-1} s_1); \qquad U_{\eps, 2} = \eps^{-1} U(\eps^{-1} s_2), 
\]
where $U$ is the extremal of the Sobolev inequality in 
$\R^n$, see \eqref{eq:std-norm-bub}, and where $s_i$ stands for the geodesic distance from $P_i$, $i = 1, 2$. By the results in 
\cite{Petean}, such functions are also extremals 
for the Sobolev inequality in the Ricci-flat dilation-invariant 
cone with link $\Sp^3/\Z_2$. Since we are assuming that $\mathcal{Y}(M,[g])$ is not attained, it must then be 
\[
 Q_{g}(U_{\eps, i}) \searrow \frac{\sqrt{2}}{2} \Y_4 \qquad \hbox{ as } \eps \to 0, \quad i = 1, 2. 
\]
For $\bar{\eps} > 0$ sufficiently small, we can then consider the class of {\em admissible maps} 
\begin{equation*}%\label{eq:path-Gamma}
    \Pi = \Pi_{\bar{\eps}} := \big\{ \gamma \in C([0,1]; W^{1,2}(M)) \; | \; 
    \gamma(0) = U_{\bar{\eps}, 1}, \gamma(1) = U_{\bar{\eps}, 2} \big\}, 
\end{equation*}
and the min-max value 
\begin{equation*}%\label{eq:c-minmax}
 c:= \inf_{\gamma \in \Pi} \max_{t \in [0,1]} Q_{g} 
 (\gamma(t)). 
\end{equation*}
It is possible to show via concentration-compactness arguments that, 
for $\bar{\eps} > 0$ sufficiently small, one has the strict inequality 
$c > \max \{ Q_{g}(U_{\bar{\eps}, 1}), Q_{g}(U_{\bar{\eps}, 2}) \}$, see Lemma \ref{lemma:concentration-compactness} for a precise statement. 

In view of this, standard variational tools imply the existence of a 
Palais-Smale sequence for the Yamabe energy at level $c > \frac{\sqrt{2}}{2} \Y_4$. It turns out that we also 
have the following inequality 
\begin{equation}\label{eq:c<Y4}
c < \Y_4,
\end{equation}
see Proposition \ref{prop:gammabar-yam}. By the fact that the  energy of two 
singular bubbles coincides with that of a regular one, see \eqref{eq:j1j2}, 
all possible blow-up scenarios would be ruled out. This implies the existence of a critical point of the Yamabe 
energy, and therefore a solution of the Yamabe problem. Variants of the  above variational scheme were used in e.g. 
\cite{Coron}, \cite{Cao}, \cite{Bianchi-96} and \cite{Car-Ma}, in the context of 
critical equations in bounded domains, nonlinear field 
equations in $\R^n$, Kazdan-Warner's problem and  singular Liouville equations 
on compact surfaces. 

\bigskip

However, in the present situation it is particularly delicate to prove the upper bound in \eqref{eq:c<Y4}, and we will describe next how we proceed.  The idea is to consider a curve $\hat{\gamma} \colon [0,1] \to M$ joining the points $P_1$ and $P_2$ and 
not passing through any other singular point. We associate to it an admissible curve $\bar{\gamma} \in \Pi$ such that the $L^{\frac{2n}{n-2}}$-norm 
of $\bar{\gamma}(t)$ is concentrated near $\hat{\gamma}(t)$ and such that 
the Yamabe energy of $\bar{\gamma}(t)$ is always below $\Y_4$ for all $t$. 

If $\hat{\gamma}(t)$ is outside a fixed neighborhood of $\{P_1, P_2\}$, 
we consider a test function as in \cite{Schoen-84}, with the profile of a
regular bubble $U_\eps$ centered at $\hat{\gamma}(t)$ and glued to a suitable multiple of the Green's function of $L_g$ with pole at $\hat{\gamma}(t)$. 
Thanks to a recent positive mass theorem for (conformal blow-ups at regular points of) manifolds with conical singularities from \cite{Dai-Sun-Wang-24-preprint}, we can guarantee the desired upper bound. 
The upper bound on $Q_g$  becomes though particularly delicate when $\hat{\gamma}(t)$  approaches one of the singular points, say $P_1$, since we need to 
deform a regular bubble into a singular one. 

First, we prove such a property in the flat cone obtained 
as a quotient of $\R^4$ via the antipodal action. Lifting to 
$\R^4$, we consider a symmetric sum of regular bubbles $U_{\eps,t} + U_{\eps,-t}$, 
where 
\[
U_{\eps,t}(y) = \frac{\epsilon^{-1}}{\Uetden}; 
\qquad U_{\eps,-t}(y) = \frac{\epsilon^{-1}}{\Uetmden},
\]
with ${\bf e}_1$ the first coordinate vector. 
When $t$ runs from zero to infinity, 
quotienting by the antipodal action, we obtain the desidered deformation from a 
conical bubble into a regular one, with Yamabe energy varying from $\frac{\sqrt{2}}{2} \Y_4$ and $\Y_4$,  staying always strictly between these two values. This estimate could be viewed as a {\em non-perturbative} 
version of an asymptotic expansion from \cite{ES-86}, where the
authors exploit the interaction (decreasing the Yamabe energy) of bubbles  highly concentrated at different points  to tackle variationally the Kadzan-Warner problem, see also \cite{Ba-Co}.

Adapting this construction to singular manifolds, we need to smoothly interpolate 
with the family of regular bubbles described above, glued to the Green's 
functions $G_p$ for the conformal Laplacian. In this crucial step, one needs then to understand with sufficient precision the behaviour of 
$G_p$ when $p$ approaches the conical point $P_1$. This is done in Section \ref{sec:greenfunct}, where we show that the mass grows proportionally 
to the squared inverse  distance from $P_1$. Since the mass can be identified 
as the constant term in the expansion of $G_p$ in \textit{conformal 
normal coordinates} at $p$, see \cite{Lee-Parker-87}, in this step 
we also need to analyze the dependence of such coordinates when $p$ 
appraches $P_1$.

\medspace

\begin{remark} In general dimension, or in the presence of conical 
points with links of different type, a min-max scheme as above might 
produce  bubbling at multiple  points, singular or regular. It might still be possible to rule out some non-compactness scenarios, as with the 
blow-up analysis in \cite{Ju-Vi} (showing isolated-simple bubbling behavior), 
but other tools and ideas would be needed.  
It would also be interesting to develop variational arguments for other types of singularities, as for example conical wedges of codimension two. 
\end{remark}

\bigskip

The plan of the paper is the following. In Section \ref{sec:prel} we collect some useful preliminaries on the Euclidean Sobolev quotient and on conical metrics. In Section \ref{sec:greenfunct} we discuss the existence of the Green's function on conical manifolds, and derive via parametrix the asymptotic behavior of the mass when the 
pole approaches a conical point, showing that it diverges proportionally to the inverse of the squared distance from the singularity. In Section \ref{sec:variational-argument} we introduce our min-max scheme, show that it is variationally admissible and construct a min-max path. 
Near regular points, we can prove upper bounds on the Yamabe energy using the positive mass theorem from \cite{Dai-Sun-Wang-24-preprint}. 
Finally, Section \ref{section:expansions} is devoted to proving upper bounds for the Yamabe energy along the min-max path, in the delicate regime when the center of a regular bubble approaches a conical point, and continuously deforms into a singular bubble, exploiting 
the results in Sections \ref{sec:prel}-\ref{sec:greenfunct}. 

\begin{center}
	{\bf Acknowledgments} 
\end{center}

\noindent 
The authors are supported by the PRIN Project 2022AKNSE4 {\em Variational and Analytical aspects of Geometric PDEs}, and 
A.M. by the project {\em Geometric problems with loss of compactness} from Scuola Normale Superiore. The authors are members of GNAMPA, as part of INdAM.

\section{Preliminary facts}
\label{sec:prel}

In this section we introduce some useful preliminary facts. We first investigate some properties of the Sobolev quotient in $\R^4$, in particular on suitable 
linear combinations of two bubble functions. We then 
consider regularity and lifting properties of conical metrics as described in the introduction.

\subsection{On the Euclidean Sobolev quotient}
\label{sec:lemmas}

Let $\mathcal{S}_n > 0$ be the Sobolev constant in dimension $n$, that is, the largest constant such that
        \begin{equation*} %\label{eq:Sob-con}
            \mathcal{S}_n \norm{u}_{L^{\frac{2n}{n-2}}(\R^n)}^2 \le \norm{\nabla u}_{L^2(\R^n)}^2, \qquad \forall u \in C_c^{\infty}(\R^n).
        \end{equation*}
We define the space $\mathscr{D}^{1,2}(\R^n):=\overline{C_c^{\infty}(\R^n)}^{\norm{\nabla \cdot}_{L^2(\R^n)}}$ and the functional
        \begin{equation}
            \label{eq:flat-quotient}
            Q_{g_{\R^n}}(u):=\frac{\int_{\R^n} \frac{4(n-1)}{n-2} \abs{\nabla u}^2 \, dx }{\big(\int_{\R^n} u^{\frac{2n}{n-2}} \, dx \big)^{\frac{n-2}{n}}}, \qquad \forall u \in \mathscr{D}^{1,2}(\R^n) \ \text{and} \ u \ge 0.
        \end{equation}
Let $U \colon \R^n \to (0,+\infty)$ be the function defined as
        \begin{equation} \label{eq:std-norm-bub}
            U(x)=U(\abs{x}):=c_n  \bigg( \frac{1}{1+\abs{x}^2} \bigg)^{\frac{n-2}{2}}, \qquad c_n:=\bigg( \frac{n(n-2)}{\mathcal{S}_n} \bigg)^{\frac{n-2}{4}}. 
        \end{equation}
    We call it the \emph{normalized bubble}, the reason being that $\norm{U}_{L^{\frac{2n}{n-2}}(\R^n)} = 1$. Moreover, $U$ is a solution of the following equation:
        \begin{equation} \label{eq:Y-eq-Rn}
            - \Delta U = \mathcal{S}_n U^{\frac{n+2}{n-2}} \qquad \hbox{ in } \R^n. 
        \end{equation}
    Starting from $U$, it is possible to construct an $(n+1)$-dimensional family of solutions, that is, for every $\eps > 0$ and every $x_0 \in \R^n$, we define the function
        \begin{equation} \label{eq:bub-family}
            U_{\eps, x_0}(x) := \eps^{-\frac{n-2}{2}} U\bigg(\frac{x-x_0}{\eps}\bigg).
        \end{equation}
    We remark that each element of this family is an absolute minimizer of the functional defined by~\eqref{eq:flat-quotient}.

We define next a family of \emph{double-bubbles} as follows
        \begin{equation}
            \label{eq:double-bubble}
                \widehat{U}_{\eps,x_0}(x) := U_{\eps, x_0}(x) + U_{\eps, x_0}(-x) = U_{\eps, x_0}(x) + U_{\eps, -x_0}(x).
        \end{equation}
By taking quotient via the antipodal map, such functions represent, on a conical manifold, a deformation from a 
regular bubble into a singular one, as $|x_0|$ decreases from $+\infty$ to zero. Our next goal is to estimate from above and below the Euclidean Sobolev quotient on such functions.

\begin{lemma}
    Let $\eps, t > 0$ and let $\nu \in \Sp^3$. Then,
        \begin{equation}
            \label{eq:almost-mon}
                6 \mathcal{S}_4 < Q_{g_{\R^4}}\big(\widehat{U}_{\eps, t \nu} \big) < 6 \sqrt{2} \mathcal{S}_4, \qquad \forall t \in (0,+\infty),
        \end{equation}
        where $\widehat{U}_{\eps, t \nu}$ is the double-bubble defined in \eqref{eq:double-bubble}.
\end{lemma}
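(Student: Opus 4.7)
Write $U_1 := U_{\varepsilon, t\nu}$ and $U_2 := U_{\varepsilon, -t\nu}$, and introduce the two interaction integrals
\begin{equation*}
I := \int_{\R^4} U_1^3 U_2 \, dx, \qquad J := \int_{\R^4} U_1^2 U_2^2 \, dx,
\end{equation*}
both finite and positive. My plan is to express $Q_{g_{\R^4}}(\widehat{U}_{\varepsilon, t\nu})$ purely in terms of $I$ and $J$, and then reduce the two inequalities to algebraic statements. Using \eqref{eq:Y-eq-Rn} in dimension four ($-\Delta U_i = \mathcal{S}_4 U_i^3$), integration by parts yields $\int \nabla U_1 \cdot \nabla U_2 \, dx = \mathcal{S}_4 I$ and $\|\nabla U_i\|_{L^2}^2 = \mathcal{S}_4$, so that $\|\nabla \widehat{U}_{\varepsilon, t\nu}\|_{L^2}^2 = 2\mathcal{S}_4(1+I)$. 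Expanding $(U_1+U_2)^4$ and using $\|U_i\|_{L^4}^4 = 1$ together with the antipodal symmetry $\int U_1 U_2^3 = \int U_1^3 U_2$, we obtain $\|\widehat{U}_{\varepsilon, t\nu}\|_{L^4}^4 = 2 + 8I + 6J$, and therefore
\begin{equation*}
Q_{g_{\R^4}}(\widehat{U}_{\varepsilon, t\nu}) \;=\; \frac{12\,\mathcal{S}_4\,(1+I)}{\sqrt{2+8I+6J}}.
\end{equation*}

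For the \textbf{upper bound}, this formula reduces the inequality $Q_{g_{\R^4}}(\widehat{U}_{\varepsilon, t\nu}) < 6\sqrt{2}\,\mathcal{S}_4$ to the algebraic statement $I^2 < 2I + 3J$. The key ingredient is the Cauchy--Schwarz inequality applied in the form $I = \int (U_1^2)(U_1 U_2)\,dx \le \|U_1\|_{L^4}^2 \, J^{1/2} = J^{1/2}$, giving $I^2 \le J$. Since $I,J>0$ for every $t>0$, this in turn yields $I^2 \le J < 2I + 3J$, and the upper bound follows.

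For the \textbf{lower bound}, I would invoke the classical Sobolev inequality $\mathcal{S}_4 \|u\|_{L^4}^2 \le \|\nabla u\|_{L^2}^2$, which gives the non-strict bound $Q_{g_{\R^4}}(\widehat{U}_{\varepsilon, t\nu}) \ge 6\mathcal{S}_4$, together with the Aubin--Talenti classification: equality is achieved only by translates, dilations and positive multiples of $U$. To promote this to a strict inequality for $t>0$, I would argue that $\widehat{U}_{\varepsilon, t\nu}$ cannot be of the form $c\,U_{\varepsilon', x_0}$: the symmetry $\widehat{U}_{\varepsilon, t\nu}(-x) = \widehat{U}_{\varepsilon, t\nu}(x)$ forces such a representative to have $x_0=0$, hence to be radial around the origin; but a direct evaluation of $\widehat{U}_{\varepsilon, t\nu}$ at points of the form $s\nu$ versus $s\nu^\perp$ (using the explicit formula $U(x)=c_4(1+|x|^2)^{-1}$) produces different values, contradicting radial symmetry. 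This rules out extremality and gives the strict bound $Q_{g_{\R^4}}(\widehat{U}_{\varepsilon, t\nu}) > 6\mathcal{S}_4$.

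The main subtlety lies in the strictness of the lower bound, since the upper bound falls out of a one-line Cauchy--Schwarz argument; for the lower inequality one must exclude that the antipodal sum accidentally reconstitutes a single Aubin--Talenti bubble. I do not foresee any dependence of the argument on $\varepsilon$: the whole computation is invariant under the joint rescaling $(\varepsilon, t) \mapsto (\lambda \varepsilon, \lambda t)$, so one may as well normalize $\varepsilon = 1$ throughout.
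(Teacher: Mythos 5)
Your proposal is correct, and for the upper bound it takes a genuinely different and markedly more elementary route than the paper; your lower-bound argument (sharp Sobolev inequality plus the classification of extremals, then excluding that the antipodal sum is a single bubble via the symmetry/non-radiality observation) is essentially the paper's own. For the upper bound the paper does not use your algebraic reduction: it first derives the expansion $Q_{g_{\R^4}}(\widehat{U}_{\epsilon,t\nu})=6\sqrt{2}\,\mathcal{S}_4-6\sqrt{2}B\epsilon^2t^{-2}+o(t^{-2})$ as $t\to+\infty$ from Bahri-type interaction estimates, and then argues by contradiction at an interior maximum point $t_0$, using the derivative identity \eqref{eq:der-b-1} relating $b'(t)$ to $a'(t)$ and to the derivative of $c(t)=\int U_+^2U_-^2$, together with the strict monotonicity of $a(t)$ and $c(t)$ established by a reflection argument. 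Your route instead uses $-\Delta U_i=\mathcal{S}_4U_i^3$ and the antipodal symmetry to write
\begin{equation*}
Q_{g_{\R^4}}\big(\widehat{U}_{\epsilon,t\nu}\big)=\frac{12\,\mathcal{S}_4\,(1+I)}{\sqrt{2+8I+6J}},\qquad I=\int_{\R^4}U_1^3U_2\,dx,\quad J=\int_{\R^4}U_1^2U_2^2\,dx,
\end{equation*}
so that the upper bound is exactly the algebraic inequality $I^2<2I+3J$, which indeed follows from Cauchy--Schwarz ($I\le J^{1/2}$, hence $I^2\le J<2I+3J$ since $I,J>0$); even more simply, H\"older gives $I\le \|U_1\|_{L^4}^3\|U_2\|_{L^4}=1$, whence $I^2\le I<2I+3J$. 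This is a clean, non-asymptotic, pointwise-in-$t$ proof needing no differentiation in $t$ and no monotonicity; what the paper's longer argument buys is quantitative information that is reused later (the expansion \eqref{eq:near-infty} and the interaction estimates \eqref{eq:F3-estimate}--\eqref{eq:F3-estimate-3} feed into the expansions of Section \ref{section:expansions}), but for the inequality \eqref{eq:almost-mon} itself your argument suffices. Note finally that the lower bound cannot be obtained by the same trivial algebra — it is equivalent to $3J<1+2I^2$, which degenerates to an equality as $t\to0$ — so your appeal to the classification of Sobolev extremals there is genuinely needed, and your verification that $\widehat{U}_{\epsilon,t\nu}$ is symmetric but not radial (hence not of the form $cU_{\eps',x_0}$) correctly supplies the strictness.
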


\begin{proof}
    Without loss of generality, assume $\nu=\mathbf{e}_1$ and let us denote $\Uethat=\widehat{U}_{\epsilon,t\mathbf{e}_1}$.
    First, we observe that $Q_{g_{\R^4}} (\Uethat) \ge 6 \mathcal{S}_4$ for every $t \ge 0$, with equality if and only if $t=0$. Indeed, the minimum of the quotient is exactly $6 \mathcal{S}_4$ and the minimizers are classified:  they are indeed positive multiples of the functions defined by~\eqref{eq:bub-family}. In particular, if $\Uethat  = k U_{\eta, x_0}$ for a positive constant $k > 0$, then $t=0$ (and $x_0=0$, $\eta=\eps$, $k=2$).

     In ~\cite{Bahri-89}, estimate F3 and equation~(1.6) (or \eqref{eq:gradprod-integral} below), it is proved that
        \begin{equation}
            \label{eq:F3-estimate}
                \int_{\R^4} \nabla \Uet \cdot \nabla \Uetm  \, dx = B \frac{\epsilon^2}{t^2} + o\Big(\frac{\epsilon^2}{t^2}\Big), \qquad \text{as $\frac{\epsilon}{t} \to 0^+$},
        \end{equation}
    where $B > 0$ is some given positive constant. Now, $\Uet$ is a solution of~\eqref{eq:Y-eq-Rn}, hence, combining~\eqref{eq:F3-estimate} with an integration by parts we have that 
        \begin{equation}
            \label{eq:F3-estimate-2}
                \int _{\R^4} \Uet^3 \Uetm  \, dx = \int _{\R^4} \Uet \Uetm^3  \, dx = B \mathcal{S}_4^{-1} \frac{\epsilon^2}{t^2} + o\Big(\frac{\epsilon^2}{t^2}\Big), \qquad \text{as $\frac{\epsilon}{t}\to 0^+$}.
        \end{equation}
    In addition, we also have the following asymptotic result,
        \begin{equation}
            \label{eq:F3-estimate-3}
                \int _{\R^4} \Uet^2 \Uetm^2  \, dx = O\Big(\frac{\epsilon^4}{t^4}\log\frac{\epsilon}{t}\Big), \qquad \text{as $\frac{\epsilon}{t} \to 0^+$},
        \end{equation}
    see e.g. equation (2.10) in \cite{Bianchi-96}, or \eqref{eq:buest3} below.
    From~\eqref{eq:F3-estimate},~\eqref{eq:F3-estimate-2},~\eqref{eq:F3-estimate-3} and the elementary formula $(1+h)^{-1/2}=1-h/2+o(h)$, as $h \to 0^+$, we deduce that, for $\epsilon>0$ fixed, one has
        \begin{equation}
            \label{eq:near-infty}
                Q_{g_{\R^4}} (\Uethat ) = 6 \sqrt{2} \mathcal{S}_4 - 6 \sqrt{2} B \eps^2 t^{-2} + o( t^{-2}), \qquad \text{as $t \to +\infty$}.
        \end{equation}
    
    \smallskip

    For convenience, we introduce the following notation:
        \begin{equation*}
            %\label{eq:f-a-b}
                f(t):= Q_{g_{\R^4}} (\Uethat ), \quad a(t):= 6 \int_{\R^4} \abs{\nabla \Uethat}^2  \, dx, \quad b(t):= \bigg( \int_{\R^4} \Uethat^4 \, dx \bigg)^{\frac{1}{2}}.
        \end{equation*}
    Clearly $f(t)=a(t)/b(t)$. We claim that the following relation holds:
        \begin{gather}
            \label{eq:der-b-1}
                b'(t) = \frac{2}{b(t)} \bigg( \frac{a'(t)}{6 \mathcal{S}_4} + \frac{3}{2} \frac{d}{d t} \int_{\R^4} \Uet^2 \Uetm^2 \, dx \bigg).
        \end{gather}
    In order to keep the formulas short and highlight the key steps, we also introduce the shortcuts $U_{\pm}$ and $\dot{U}_{\pm}$ for $U_{\eps, \pm t }$ and for $\partial (U_{\eps, \pm t } )/ \partial t$, respectively. Then, from the definition of $b(t)$ we obtain
        \begin{align}
            \label{eq:der-b-2}
            b'(t) & = \frac{2}{b(t)} \int_{\R^4} (U_{+} + U_{-})^3 (\dot{U}_{+} + \dot{U}_{-}) \, dx.
        \end{align}
    Expanding the product within the integral and rearranging terms, we have 
        \begin{align}
            \notag
            (U_{+} + U_{-})^3 (\dot{U}_{+} + \dot{U}_{-}) = \ & \underbrace{U_{+}^3 \dot{U}_{-} + U_{-}^3 \dot{U}_{+}}_{=T_1} 
            + \underbrace{U_{+}^3 \dot{U}_{+} + U_{-}^3 \dot{U}_{-}}_{=T_2} \\[1ex]
            \label{eq:grouping}
            & + \underbrace{3 U_{+}^2 U_{-} \dot{U}_{+} + 3 U_{-}^2 U_{+} \dot{U}_{-}}_{=T_3} 
            + \underbrace{3 U_{+}^2 U_{-} \dot{U}_{-} + 3 U_{-}^2 U_{+} \dot{U}_{+}}_{=T_4}.
        \end{align}
    Now, the integral of $T_2$ coincides with the $t$-derivative of $(\norm{U_{+}}_{L^4(\R^4)}^4 + \norm{U_{-}}_{L^4(\R^4)}^4)/4$, which is a constant quantity in $t$, and therefore is zero. We also observe that $2 T_4 = 3 \partial ( U_{+}^2 U_{-}^2 ) / \partial t$. Moreover, 
        \begin{equation}
            \label{eq:bridge-A-C}
                \int_{\R^4} T_1 \, dx
                =
                \mathcal{S}_4^{-1} \underbrace{\int_{\R^4} \nabla U_{+} \cdot \nabla \dot{U}_{-} + \nabla U_{-} \cdot \nabla \dot{U}_{+} \, dx}_{=a'(t)/12}
                =
                \int_{\R^4} T_3 \, dx,
        \end{equation}
    where the first identity follows from~\eqref{eq:Y-eq-Rn} and an integration by parts, while the second one is a consequence of an integration by parts and the fact that $\dot{U}_{\pm}$ are solutions of the linearized equations
        \[
            - \Delta \dot{U}_{\pm} = 3 \mathcal{S}_4 U_{\pm}^2 \dot{U}_{\pm}.
        \]
    Finally, from~\eqref{eq:der-b-2},~\eqref{eq:grouping} and~\eqref{eq:bridge-A-C} we deduce~\eqref{eq:der-b-1}.
    
    We can now conclude the proof of~\eqref{eq:almost-mon}. Let us suppose by contradiction that $\sup f \ge 6\sqrt{2} \mathcal{S}_4$, then~\eqref{eq:near-infty} implies that $f$ attains its supremum, in particular there exists $t_0 > 0$ such that $f(t_0) \ge 6\sqrt{2} \mathcal{S}_4$ and $f'(t_0) = 0$. Therefore,
        \[
            a'(t_0) b(t_0) = b'(t_0) a(t_0) = f(t_0) \bigg( \frac{a'(t_0)}{3 \mathcal{S}_4} + 3 c'(t_0)  \bigg), \qquad c(t) := \int_{\R^4} \Uet^2 \Uetm^2 \, dx,
        \]
    where the second identity follows from~\eqref{eq:der-b-1}. The previous equation is equivalent to
        \begin{equation}
            \label{eq:aa-final-1}
            a'(t_0) \bigg( b(t_0) - \frac{f(t_0)}{3 \mathcal{S}_4} \bigg) = 3 f(t_0) c'(t_0).
        \end{equation}

    At this point we claim that $a(t)$ and $c(t)$ are monotone decreasing functions, with negative first derivative. Given the claim, from~\eqref{eq:aa-final-1} we deduce that $b(t_0) > f(t_0) / 3 \mathcal{S}_4$, and consequently $a(t_0) = f(t_0) b(t_0) > f(t_0)^2 / 3 \mathcal{S}_4$. We know that $f(t_0) \ge 6 \sqrt{2} \mathcal{S}_4$, hence $a(t_0) > 24 \mathcal{S}_4 = a(0)$. This is a contradiction, since $a(t)$ is monotone decreasing.

    It remains to show that $a(t)$ and $c(t)$ are monotone decreasing functions with negative first derivative for every $t > 0$. We start with $a(t)$. Without loss of generality, we can assume that $\eps=1$. We already know from~\eqref{eq:bridge-A-C} that
        \begin{equation}
            \label{eq:a-der1}
            a'(t)=12 \int_{\R^4} \nabla U_{+} \cdot \nabla \dot{U}_{-} + \nabla U_{-} \cdot \nabla \dot{U}_{+} \, dx = 24 \int_{\R^4} \nabla U_{-} \cdot \nabla \dot{U}_{+} \, dx = 24 \mathcal{S}_4 \int_{\R^4} \dot{U}_{+} U_{-}^3 \, dx,
        \end{equation}
    where the second identity is a consequence of $\nabla U_{+}(-x)=-\nabla U_{-}(x)$ and $\nabla \dot{U}_{+}(-x)=-\nabla \dot{U}_{-}(x)$, while the third identity follows from an integration by parts and ~\eqref{eq:Y-eq-Rn}. Using~\eqref{eq:std-norm-bub},~\eqref{eq:bub-family}, and~\eqref{eq:a-der1} we deduce that
        \begin{align}
            \notag
            a'(t) & = 24 \mathcal{S}_4 \int_{\R^4} \frac{U'(\abs{x-t\mathbf{e}_1})}{\abs{x - t\mathbf{e}_1}} (t-x \cdot \mathbf{e}_1) U^3(\abs{x+t\mathbf{e}_1}) \, dx \\[1ex] \notag
            & = 24 \mathcal{S}_4 \int_{\R^3} dz \int_{\R} \frac{U' \Big(\sqrt{\abs{\zeta - t}^2 + \abs{z}^2}\Big)}{\sqrt{\abs{\zeta - t}^2 + \abs{z}^2}} (t-\zeta) U^3\Big(\sqrt{\abs{\zeta + t}^2 + \abs{z}^2}\Big) \, d\zeta \\[1ex] \notag
            & = - 24 \mathcal{S}_4 \int_{\R^3} dz \int_{\R} \frac{U' \Big(\sqrt{\abs{\zeta}^2 + \abs{z}^2}\Big)}{\sqrt{\abs{\zeta}^2 + \abs{z}^2}} \zeta U^3\Big(\sqrt{\abs{\zeta + 2t}^2 + \abs{z}^2}\Big) \, d\zeta \\[1ex] \label{eq:a-der2}
            & = 24 \mathcal{S}_4 \int_{\R^3} dz \int_{0}^{+\infty} \frac{U' \Big(\sqrt{\abs{\zeta}^2 + \abs{z}^2}\Big)}{\sqrt{\abs{\zeta}^2 + \abs{z}^2}} \zeta \Big[ U^3\Big(\sqrt{\abs{\zeta - 2t}^2 + \abs{z}^2}\Big) - U^3\Big(\sqrt{\abs{\zeta + 2t}^2 + \abs{z}^2}\Big) \Big]  \, d\zeta.
        \end{align}
    Combining the monotonicity of $U$ with the observation that $\abs{\zeta + 2t}^2 > \abs{\zeta - 2t}^2$ for every $t, \zeta > 0$, the conclusion follows from~\eqref{eq:a-der2}.
    Just like for the function $a(t)$, we have that
        \begin{equation}
            \label{eq:c-der1}
            c'(t) = 2 \int_{\R^4} \big(U_{+}^2 U_{-} \dot{U}_{-} + U_{-}^2 U_{+} \dot{U}_{+}\big) \, dx = 4 \int_{\R^4} U_{-}^2 U_{+} \dot{U}_{+} \, dx,
        \end{equation}
    where the second identity is a consequence of $U_{+}(-x)=U_{-}(x)$ and $\dot{U}_{+}(-x)=\dot{U}_{-}(x)$. Using again~\eqref{eq:std-norm-bub},~\eqref{eq:bub-family},  from~\eqref{eq:c-der1} we deduce that
        \begin{align*}
            \notag
            c'(t) & = 4 \int_{\R^4} \frac{U'(\abs{x-t\mathbf{e}_1})}{\abs{x - t\mathbf{e}_1}} (t-x \cdot \mathbf{e}_1) U(\abs{x-t\mathbf{e}_1}) U^2(\abs{x+t\mathbf{e}_1}) \, dx \\[1ex] %\label{eq:c-der2}
            & = 4 \int_{\R^3} dz \int_{0}^{+\infty} \frac{U' (\abs{y})}{\abs{y}} \zeta U (\abs{y}) \Big[ U^2\Big(\sqrt{\abs{\zeta - 2t}^2 + \abs{z}^2}\Big) - U^2\Big(\sqrt{\abs{\zeta + 2t}^2 + \abs{z}^2}\Big) \Big]  \, d\zeta,
        \end{align*}
    where in the second identity we set $\abs{y}^2 = \abs{\zeta}^2 + \abs{z}^2$. As in the case of the function $a(t)$, the conclusion follows directly from the radial monotonicity and the positivity of the function $U$.
\end{proof}

\subsection{Properties of conical metrics}

We list and prove here some properties useful 
to understand metrics near conical points. 
We begin with the following general fact.

\begin{lemma}
    \label{lemma:Analisi-2}
    Let $k \ge 1$ be an integer number and let $a(t,y) \in C^{\infty}(\R \times \R^n)$. Then, the function $b(x):=\abs{x}^k a (\abs{x}, x/ \abs{x})$, defined to be zero at $x=0$, is of class $C^{k-1,1}_{loc}(\R^n)$. In particular, if  $a(t,y)$ does not depend on the $t$-variable, then $b \in C^{k-1,1}(\R^n)$.
\end{lemma}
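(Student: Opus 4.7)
The plan is to reduce the regularity question for $b$ near the origin to a cleaner auxiliary statement about functions of the form $\abs{x}^m\phi(\abs{x},x/\abs{x})$ with $\phi$ smooth, via a Taylor expansion of $a$ in its radial variable. Away from $x=0$ both $\abs{x}^k$ and $x/\abs{x}$ are $C^\infty$, so $b$ is already smooth there; the only issue is the behavior at the origin and in a neighborhood of it.

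I would first Taylor-expand $a(t,y)$ in $t$ at $t=0$ to order $k-1$,
$$a(t,y) \;=\; \sum_{j=0}^{k-1}\frac{t^j}{j!}\, a_j(y) \;+\; t^k\, r(t,y),$$
with $a_j(y):=\partial_t^j a(0,y)\in C^\infty(\R^n)$ and $r\in C^\infty(\R\times\R^n)$ given by the integral remainder. Substituting $t=\abs{x}$, $y=x/\abs{x}$ and multiplying by $\abs{x}^k$,
$$b(x) \;=\; \sum_{j=0}^{k-1}\frac{1}{j!}\,\abs{x}^{k+j}\, a_j(x/\abs{x}) \;+\; \abs{x}^{2k}\, r(\abs{x},x/\abs{x}).$$
This reduces the proof to bounding the regularity of each summand separately. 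In the special case where $a$ does not depend on $t$, only the single term $\abs{x}^k a(x/\abs{x})$ survives, with no remainder.

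Next I would prove, by induction on the integer $m\ge 1$, the auxiliary claim that if $\phi\in C^\infty(\R\times\R^n)$ then $f_m(x):=\abs{x}^m\phi(\abs{x},x/\abs{x})$, extended by $0$ at the origin, is of class $C^{m-1,1}_{loc}(\R^n)$. The base case $m=1$ follows by direct computation of $\partial_i f_1$ on $\R^n\setminus\{0\}$, observing that each such derivative is bounded on compact sets, so that $f_1$ is locally Lipschitz. For the inductive step I would use the identities $\partial_i\abs{x}=x_i/\abs{x}$ and $\partial_i(x_j/\abs{x})=\abs{x}^{-1}(\delta_{ij}-x_ix_j/\abs{x}^2)$ to check that every first partial derivative of $f_m$ on $\R^n\setminus\{0\}$ can be rewritten as $\abs{x}^{m-1}\psi(\abs{x},x/\abs{x})$ for some new $\psi\in C^\infty(\R\times\R^n)$; the inductive hypothesis then yields $\partial_i f_m\in C^{m-2,1}_{loc}$, hence $f_m\in C^{m-1,1}_{loc}$.

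Applying the claim to each summand in the expansion of $b$: the $j$-th one lies in $C^{k+j-1,1}_{loc}$ and the remainder in $C^{2k-1,1}_{loc}$, so the overall regularity is $C^{k-1,1}_{loc}(\R^n)$, saturated by the $j=0$ term. For the final global assertion, if $a$ is independent of $t$ then $b(x)=\abs{x}^k a(x/\abs{x})$, and all partial derivatives of order $j\le k$ are $(k-j)$-homogeneous in $x$ with smooth angular profile, hence uniformly bounded on all of $\R^n$; this upgrades $C^{k-1,1}_{loc}$ to $C^{k-1,1}(\R^n)$ globally. The main obstacle I expect is the inductive step of the auxiliary claim: one must verify in detail, using the chain rule, that every first partial derivative of $\abs{x}^m\phi(\abs{x},x/\abs{x})$ genuinely factors as $\abs{x}^{m-1}\psi(\abs{x},x/\abs{x})$ with $\psi$ smooth, crucially using that the derivative of $x/\abs{x}$ picks up the factor $\abs{x}^{-1}$ only through the $0$-homogeneous quantity $\delta_{ij}-\omega_i\omega_j$, where $\omega=x/\abs{x}$.
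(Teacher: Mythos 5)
Your inductive auxiliary claim --- rewriting each first partial derivative of $\abs{x}^m\phi(\abs{x},x/\abs{x})$ as $\abs{x}^{m-1}\psi(\abs{x},x/\abs{x})$ with a new smooth $\psi$ and invoking the inductive hypothesis --- is precisely the paper's proof (your $\psi$ is the paper's $\bar{a}_i$), so the argument is correct and essentially the same; the preliminary Taylor expansion in $t$ is superfluous, since the auxiliary claim with $m=k$ and $\phi=a$ is already the full lemma. One small slip in the global case: for $j<k$ the order-$j$ derivatives are homogeneous of positive degree $k-j$, hence \emph{not} bounded on $\R^n$; what you actually need (and do have) is that the order-$k$ derivatives are $0$-homogeneous and therefore bounded, which yields the global Lipschitz bound on the derivatives of order $k-1$.
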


\begin{proof}
    We prove the claim by induction. If $k=1$, then $b$ is continuous on the whole $\R^n$. Moreover, for every $i \in \{ 1, \dots, n \}$, we have
        \begin{equation*}
            %\label{eq:ind-Lip-1}
                \frac{\partial b}{\partial x_i}(x) = \frac{x_i}{\abs{x}} a \bigg(\abs{x},  \frac{x}{\abs{x}} \bigg) + x_i \frac{\partial a}{\partial t} \bigg(\abs{x}, \frac{x}{\abs{x}} \bigg) + \sum_{j=1}^n \frac{\partial a}{\partial y_j} \bigg(\abs{x}, \frac{x}{\abs{x}} \bigg) \bigg( \delta_{ij}  - \frac{x_i x_j}{\abs{x}^2} \bigg).
        \end{equation*}
    This implies that $\nabla b \in L^{\infty}_{loc}(\R^n)$, therefore $b$ is a locally Lipschitz function on $\R^n$. It is clear that if the function $a$ does not depend on the first variable, then $\nabla b \in L^{\infty}(\R^n)$, and consequently, $b$ is globally Lipschitz.

    We now consider $b(x):=\abs{x}^{k+1} a (\abs{x}, x/ \abs{x})$. As before, for every $i \in \{ 1, \dots, n \}$, we have
        \begin{align*}
                \frac{\partial b}{\partial x_i}(x) & = (k+1) \abs{x}^{k} \frac{x_i}{\abs{x}} a \bigg(\abs{x}, \frac{x}{\abs{x}} \bigg) +
                \abs{x}^{k} x_i \frac{\partial a}{\partial t} \bigg(\abs{x}, \frac{x}{\abs{x}} \bigg) + \abs{x}^k \sum_{j=1}^n \frac{\partial a}{\partial y_j} \bigg(\abs{x},  \frac{x}{\abs{x}} \bigg) \bigg( \delta_{ij}  - \frac{x_i x_j}{\abs{x}^2} \bigg) \\[0.5ex]
                & =
                \abs{x}^k \bar{a}_i \bigg(\abs{x}, \frac{x}{\abs{x}} \bigg), \qquad
            \bar{a}_i(t,y):= (k+1) y_i a(t, y) + t y_i \frac{\partial a}{\partial t} (t, y) + \sum_{j=1}^n \frac{\partial a}{\partial y_j} (t, y) \big( \delta_{ij} -  y_i y_j \big).
        \end{align*}
    By the inductive hypothesis, we know that $\nabla b \in C^{k-1,1}_{loc}(\R^n)$. This implies that $b \in C^{k,1}_{loc}(\R^n)$. A similar conclusion holds when the function $a$ does not depend on the first variable.
\end{proof}

We introduce the map
\begin{equation}\label{eq:Phimap-def}
            \Phi \colon \R^n \setminus \{ 0 \}  \to (0,+\infty) \times \Sp^{n-1}; \quad \qquad 
            x  \mapsto (\abs{x}, x/\abs{x}) 
\end{equation}   
In the sequel, we will use the same symbol to denote the restriction of $\Phi$ to the set $B_R(0)$.

\begin{corollary}\label{cor:metric-smooth-extension}
    Let $\tilde{g}$ be a smooth metric on $(0,\delta] \times \Sp^{n-1}$. Let us assume that there exists an integer number $k \ge 1 $ such that
        \begin{equation} \label{eq:reg-cond}
            \tilde{g} - g_0 = r^{k+2} \nu(r),
        \end{equation}
    for some smooth function $\nu \colon [0,\delta] \to \Gamma(\mathcal{S}^2(\Sp^{n-1}))$, where $\Gamma(\mathcal{S}^2(\Sp^{n-1}))$ denotes the space of smooth symmetric two-tensor fields on $\Sp^{n-1}$, and $g_0:=dr^2 +r^2 h_0$ is the purely conical metric on $(0,\delta] \times \Sp^{n-1}$.

    Let $g := \Phi^* \tilde{g}$ be the metric on $\overline{B}_{\delta}(0) \setminus \{ 0 \}$ defined as the pull-back of the metric $\tilde{g}$ through the map $\Phi$ introduced in~\eqref{eq:Phimap-def}. Then, $g$ extends to a $C^{k-1,1}$-metric on $\overline{B}_{\delta}(0)$. 
\end{corollary}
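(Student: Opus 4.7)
The plan is to split the pull-back as $g = \Phi^* g_0 + \Phi^*(\tilde{g} - g_0)$ and handle the two summands separately, invoking Lemma~\ref{lemma:Analisi-2} on the components of the correction term.

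The first summand is straightforward: writing the Euclidean metric on $\R^n$ in polar coordinates one has $g_{\R^n} = dr^2 + r^2 g_{\Sp^{n-1}}$, and therefore $\Phi^* g_0 = g_{\R^n}$, which is smooth on all of $\overline{B}_\delta(0)$. For the correction, I would compute the components of $\Phi^*(r^{k+2}\nu(r))$ in the coordinate basis $\{dx^i \otimes dx^j\}$. The retraction $\omega = x/|x|$ from $\R^n \setminus \{0\}$ onto $\Sp^{n-1}$ has differential
$$
    \frac{\partial \omega^\alpha}{\partial x^i} = \frac{1}{|x|}(\delta_{\alpha i} - \omega_\alpha \omega_i),
$$
which is homogeneous of degree $-1$ in $x$. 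Hence the components of the pull-back of any smooth $(0,2)$-tensor from $\Sp^{n-1}$ to $\R^n \setminus \{0\}$ have the form $|x|^{-2}\,\bar{a}_{ij}(x/|x|)$ with $\bar{a}_{ij}$ smooth on $\Sp^{n-1}$. Multiplying by the prefactor $r^{k+2}$ (which pulls back to $|x|^{k+2}$) and tracking the $r$-dependence of $\nu$, each component takes the form
$$
    \bigl(\Phi^*(r^{k+2}\nu(r))\bigr)_{ij}(x) = |x|^{k}\, A_{ij}\!\bigl(|x|,\, x/|x|\bigr),
$$
where $A_{ij} \in C^\infty([0,\delta] \times \Sp^{n-1})$ by the smoothness of $\nu$. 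Applying Lemma~\ref{lemma:Analisi-2} with exponent $k \ge 1$ to each such function yields $\bigl(\Phi^*(\tilde{g} - g_0)\bigr)_{ij} \in C^{k-1,1}_{\mathrm{loc}}(\R^n)$, which, since $\overline{B}_\delta(0)$ is bounded, promotes to $C^{k-1,1}(\overline{B}_\delta(0))$.

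Combining both contributions, $g$ extends to a symmetric $(0,2)$-tensor of class $C^{k-1,1}$ on $\overline{B}_\delta(0)$; non-degeneracy in a neighborhood of the origin is automatic since the correction vanishes at $0$ (as $k \ge 1$), so the extended tensor agrees with $g_{\R^n}$ at the origin and defines a genuine Riemannian metric nearby. The only delicate point in executing this plan is the exponent bookkeeping: the two factors of $|x|^{-1}$ produced by the angular differentials $d\omega^\alpha, d\omega^\beta$ must exactly absorb two of the $k+2$ powers in $r^{k+2}$, leaving the precise exponent $k$ needed to match the hypothesis of Lemma~\ref{lemma:Analisi-2}.
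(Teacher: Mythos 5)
Your proposal is correct and follows essentially the same route as the paper: decompose $g = \Phi^*g_0 + \Phi^*(r^{k+2}\nu(r))$, observe that $\Phi^*g_0$ is the Euclidean metric, show each component of the correction has the form $\abs{x}^k A_{ij}(\abs{x}, x/\abs{x})$ with $A_{ij}$ smooth (your $A_{ij}$ coincides with the paper's $\tilde{a}_{ij}(t,y)=\nu(t)(y)(e_i-(y\cdot e_i)y,\, e_j-(y\cdot e_j)y)$), and conclude via Lemma~\ref{lemma:Analisi-2}. The only cosmetic omission is that, as in the paper, one should apply the lemma to a smooth extension of $A_{ij}$ from $[0,\delta]\times\Sp^{n-1}$ to $\R\times\R^n$, which is immediate.
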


\begin{proof}
    We have $d \Phi_x \colon \R^n \to \R \times T_x \Sp^{n-1}$. A standard computation shows that
        \[
            d \Phi_x(v) = \bigg( \frac{x \cdot v}{\abs{x}}, \frac{v}{\abs{x}} - \frac{(x\cdot v) x}{\abs{x}^3} \bigg).
        \]
    From the previous formula, we deduce that
        \begin{align*} 
            \notag
            g(x)(v,w) := (\Phi^* \tilde{g})(x)(v,w) & = (\Phi^* g_0)(x)(v,w) + (\Phi^* (r^{k+2} \nu(r)))(x)(v,w) \\[1ex]
            & = v \cdot w + \abs{x}^{k} \nu(\abs{x})\bigg( v - \frac{(x\cdot v) x}{\abs{x}^2}, w - \frac{(x \cdot w) x}{\abs{x}^2} \bigg) \\[1ex] \label{eq:pull-back1}
            & = \sum_{i,j = 1}^n v^i w^j \bigg( \delta_{ij} + \abs{x}^k \tilde{a}_{ij}\bigg(\abs{x}, \frac{x}{\abs{x}}\bigg) \bigg),
        \end{align*}
    where
        \begin{equation*}
            \label{eq:pull-back2}
                \tilde{a}_{ij}(t,y):= \nu(t)(y) \Big( e_i - (y \cdot e_i) y, e_j - (y \cdot e_j) y \Big).
        \end{equation*}
    By assumption $\tilde{a}_{ij} \in C^{\infty}([0,\delta] \times \Sp^{n-1})$, so the conclusion follows by applying Lemma~\ref{lemma:Analisi-2} to a smooth extension $a_{ij} \in C^{\infty}(\R \times \R^n)$ of $\tilde{a}_{ij}$, for every $i,j \in \{ 1, \dots, n \}$.
\end{proof}

We will introduce next some useful notation. 
Denote by $B^g_r(x)$ the geodesic ball of radius $r$ and center $x$ in the metric $g$; we will omit the superscript when dealing with the Euclidean metric or when it is clear from the context, while we will omit the center when it coincides with the origin.

Let $\pi \colon \Sp^3\to\R\projective^3$ be the antipodal projection ($\pi(x)=\pi(-x)$). For any tensor field $T$ of rank $(0,q)$ we can define its equivariant lift $\widetilde{T}$ as the pullback $\widetilde{T}:=\pi^*T$. 
Given a conical point $P\in M$ and $\delta>0$ small, we can consider the projection ${\sigma}_P$ given by
    \begin{align}
    \notag
        {\sigma}_P \colon (0,2\delta)\times\Sp^3 & \to B_{2\delta}^g(P) \setminus \{P\}\cong\big( (0,2\delta)\times \R\projective^3, ds^2+s^2h(s)\big) \\
        \label{eq:prj-def}
        (s,y)&\mapsto(s,\pi(y)).
    \end{align}
    Using ${\sigma}_P$, we can define on $(0,2\delta)\times \Sp^3$ the (equivariant) pullback $\tilde{g}:={\sigma}_P^*g$ of $g$.
    Moreover, by virtue of Corollary \ref{cor:metric-smooth-extension} and employing a slight abuse of notation, we know that $\tilde{g}:=\Phi^*\tilde{g}$ extends at the origin with $C^{0,1}$ regularity, that is, we can regard $\tilde{g}$ as a $C^{0,1}$ metric over ${B}_{2\delta}\subset \R^4$ which is smooth outside $0$. Hence, we can also regard ${\sigma}_P$ as a map ${\sigma}_P:B_{2\delta}^{\tilde{g}}\to B_{2\delta}^g(P)$.

By the aforementioned \cite[Theorem 1.1]{freguglia-malchiodi-2024-pre}, we can obtain existence of Yamabe (minimizing) metrics as soon as \eqref{s:xi-conf} is verified. We are therefore interested in  situations in which \eqref{s:xi-conf} is \emph{not} verified. In this last case, we can further reduce to assume $h'(0)=0$  as a corollary of the following argument. Since the argument works in a general setting, we state the result for a generic link $(Y,h_0)$.

\begin{proposition}\label{l:Melrose}
    Let $(Y,h_0)$ be a smooth closed Riemannian $(n-1)$-manifold. Let $f \colon Y \to \R$ be a smooth function, and let $\delta > 0$ be such that $\delta \norm{f}_{\infty} < 1/2$. Then, the map $\alpha \colon [0,\delta] \times Y \to [0,5\delta/4] \times Y$ defined as
        \begin{equation}
            \label{eq:def-alpha-map}
                \alpha(s,z) := (\alpha^1(s,z), \alpha^2(s,z)) = \bigg(s-\frac{s^2}{2} f(z), \psi_s(z) \bigg)
        \end{equation}
    is an embedding, where $\psi_t \colon Y \to Y$ denotes the one-parameter family of diffeomorphisms generated by the vector field $\nabla f/2$, where the gradient is taken with respect to the metric $h_0$.
    
    Moreover, let us consider the metric $\bar{g}$ on $(0,\delta] \times Y$ defined as
        \begin{equation}
            \label{eq:def-metric-bar}
                \bar{g}:=\big(1+2sf(z)\big) \alpha^*\big(dr^2 + r^2 h(r)\big),
        \end{equation}
    where $h \colon [0,5\delta/4] \to \Gamma(\mathcal{S}^2(Y))$ is a smooth function such that $h(0)=h_0$, and $\Gamma(\mathcal{S}^2(Y))$ denotes the space of smooth symmetric two-tensor fields on $Y$. 

    Then, there exist a neighborhood $U$ of $\{ 0 \} \times Y$ in $[0,\delta] \times Y$, a positive number $\eps > 0$, and a diffeomorphism $\Upsilon \colon [0,\eps] \times Y \to U$ such that $\Upsilon(0,p)=(0,p)$ for every $p \in Y$, and
        \begin{equation} \label{eq:reg-cond-3}
            \Upsilon^*\bar{g} = dx^2+x^2 \tilde{h}(x),
        \end{equation}
    for some smooth function $\tilde{h} \colon [0,\eps] \to \Gamma(\mathcal{S}^2(Y))$. In addition, it holds that
        \begin{equation} \label{eq:reg-cond-4}
            \tilde{h}(0) = h_0, \quad \text{and} \quad \tilde{h}'(0) = h'(0) + \nabla^2_{h_0} f + f h_0.
        \end{equation}
\end{proposition}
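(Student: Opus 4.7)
The plan is to verify that $\alpha$ is an embedding, construct the normalizing diffeomorphism $\Upsilon$ via a radial-normal-coordinate argument, and then extract $\tilde h(0)$ and $\tilde h'(0)$ from a Taylor expansion of $\bar g$ at the tip. For the embedding property of $\alpha$, the hypothesis $\delta\|f\|_\infty < 1/2$ forces $\partial_s \alpha^1 = 1 - sf(z) \geq 1/2$, so the first component is strictly increasing in $s$, while $\psi_s$ is a diffeomorphism of $Y$ for each $s$; together with $\alpha(0,z)=(0,z)$ these yield that $\alpha$ is a local diffeomorphism, and a covering/degree argument gives global injectivity.

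For $\Upsilon$, I would decompose $\bar g = A\, ds^2 + 2 B_i\, ds\,dz^i + C_{ij}\,dz^i dz^j$ and observe that $A|_{s=0}=1$, $B_i|_{s=0}=0$, and $C_{ij}(s,z)=O(s^2)$, so $\{s=0\}\times Y$ behaves as a (degenerate) cone tip. The map $\Upsilon$ can be defined by taking $x$ to be the $\bar g$-arclength along the unit-speed $\bar g$-geodesics issuing from $(0,p)$ with initial velocity $\partial_s$; equivalently, $x$ solves the eikonal $|\nabla x|^2_{\bar g}=1$ with $x|_{s=0}=0$, and $p$ is constant along $\nabla x$. The Gauss lemma then gives $\Upsilon^*\bar g = dx^2 + \tilde g(x)$ on some $[0,\eps]\times Y$, with $\tilde g(x)=O(x^2)$, so that $\tilde h(x):=\tilde g(x)/x^2$ extends smoothly up to $x=0$.

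For the Taylor expansion at the tip I would compute $\bar g$ to order $s^3$, exploiting three cancellations. First, $A(s,z)=1+O(s^2)$, because the $-2sf\,ds^2$ produced by $dr^2$ is cancelled by the $+2sf\,ds^2$ produced by the conformal factor. Second, $B_i(s,z)=O(s^3)$, because the $-\tfrac{s^2}{2}\partial_i f$ from the mixed part of $dr^2$ cancels exactly the $+\tfrac{s^2}{2}\partial_i f$ arising from $r^2\,h_0(\nabla f/2,\partial_{z^i})$. Third, using the identity $\mathcal{L}_{\nabla f/2}h_0 = \nabla^2 f$, one obtains $\psi_s^*h(r(s)) = h_0 + s(h'(0)+\nabla^2 f) + O(s^2)$, which combined with $r^2 = s^2 - s^3 f + O(s^4)$ gives $r^2 \psi_s^*h(r(s)) = s^2 h_0 + s^3(h'(0)+\nabla^2 f - f h_0) + O(s^4)$; multiplying by $(1+2sf)$ restores an extra $+2s^3 f h_0$ in the angular block, producing $C_{ij} = s^2 h_0 + s^3(h'(0)+\nabla^2 f + f h_0) + O(s^4)$. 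The smallness of $A-1$ and $B_i$ also forces $\Upsilon(x,p) = (x+O(x^3), p+O(x^2))$ via the Gauss-lemma matching conditions, so that $\tilde h(x) = C_{ij}(x,p)/x^2 + O(x^2)$ at first order, yielding \eqref{eq:reg-cond-4}.

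The main obstacle is precisely these delicate cancellations: they encode the geometric fact that $\nabla^2 f + f h_0$ is the infinitesimal deformation of the conical cross-section generated by a conformal rescaling of factor $(1+2sf)$ combined with the flow along $\nabla f/2$, which is the reason behind the specific definitions in \eqref{eq:def-alpha-map}--\eqref{eq:def-metric-bar}. A secondary subtlety is ensuring that the radial-normal construction of $\Upsilon$ is valid up to the degenerate tip, which uses the non-degeneracy of $\bar g|_{s=0}=ds^2$ in the radial direction together with the correct $s^2$-rate of degeneration in the angular block.
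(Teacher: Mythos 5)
Your Taylor expansion of $\bar g$ is essentially the paper's computation: the cancellations you describe in the $ds^2$ and mixed components, and the angular expansion via $\mathcal{L}_{\nabla f/2}h_0=\nabla^2_{h_0}f$ together with $r=s(1-\tfrac{s}{2}f)$ and the conformal factor, reproduce exactly the paper's formula for $H$ and for $\tfrac{d}{ds}(\iota_s^*H)\big|_{s=0}=h'(0)+\nabla^2_{h_0}f+fh_0$. The genuine gap is in how you obtain \eqref{eq:reg-cond-3}. What must be proved is not merely that radial normal coordinates give $\Upsilon^*\bar g=dx^2+\tilde g(x)$ with $\tilde g(x)=O(x^2)$, but that $\tilde h(x):=x^{-2}\tilde g(x)$ extends \emph{smoothly} in $x$ up to $x=0$ as a family of tensors on $Y$ (otherwise $\tilde h'(0)$ in \eqref{eq:reg-cond-4} is not even defined). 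Your construction of $\Upsilon$ by unit-speed geodesics issuing from the tip, or by the eikonal equation with datum on $\{s=0\}$, is not a routine ODE-plus-Gauss-lemma argument: the metric degenerates on $\{s=0\}$ and the Christoffel symbols blow up like $1/s$ there, so existence, uniqueness and smooth dependence of these radial geodesics up to the boundary — and the resulting smoothness of $\tilde h$ at $x=0$ — is precisely the hard analytic content that you only assert. The paper avoids proving it: it first rewrites $\bar g=ds^2+s^2H(s,z)$ with $H$ smooth up to $s=0$ and $\iota_0^*H$ a metric, and then invokes the Melrose--Wunsch normal-form theorem for conic metrics. In your scheme you must either cite such a result or carry out the singular ODE/blow-up analysis yourself.

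A second gap, even granting the normal form, is your identification $\tilde h'(0)=\tfrac{d}{ds}(\iota_s^*H)\big|_{s=0}$ from the claim $\Upsilon(x,p)=(x+O(x^3),\,p+O(x^2))$, attributed to ``Gauss-lemma matching conditions''. This is not automatic: a first-order angular drift $w(p):=\partial_x\Upsilon^2(0,p)\neq 0$ would add a derivative of $\iota_0^*H$ in the direction $w$ to $\tilde h'(0)$, and an $O(x^2)$ term in $\Upsilon^s$ or an $O(x^2)$ behaviour of $\partial_{p_j}\Upsilon^s$ would likewise contaminate the first-order coefficient; the paper devotes a separate argument to proving $d\Upsilon_{(0,p)}=\mathrm{id}$, $w\equiv 0$, $\Upsilon^s=x+O(x^3)$ and $\partial_{p_j}\Upsilon^s=O(x^3)$, using $H(0,\cdot)(\partial_s,\partial_{z_j})=0$ and the nondegeneracy of $\iota_0^*H$, and something of this kind is needed in your approach as well. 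Finally, your argument that $\alpha$ is an embedding is too quick: $d\alpha$ is not triangular (it has the off-diagonal entries $-\tfrac{s^2}{2}df_z$ and $\tfrac12\nabla f$), so strict monotonicity of $\alpha^1(\cdot,z)$ plus $\psi_s$ being a diffeomorphism does not by itself give invertibility; it does hold because the relevant Schur complement equals $1-sf(z)+\tfrac{s^2}{4}\abs{\nabla f(z)}^2>0$, and global injectivity is obtained in the paper directly from the monotonicity of $f$ along its own gradient flow, which is cleaner than a covering/degree argument on a manifold with boundary mapping onto a proper subset of the target.
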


\begin{proof}
    We start by proving that the map $\alpha$ is injective. Suppose by contradiction that there exist $(s_1,z_1) \neq (s_2, z_2)$ such that $\alpha(s_1,z_1) = \alpha(s_2, z_2)$. In particular, we have $z_1 = \psi_{s_2-s_1}(z_2)$ and we can assume that $s_2 > s_1$. Since $\psi_t$ is the flow generated by the vector field $\nabla f / 2$, it follows that $f(z_1) \ge f(z_2)$. Therefore,
        \[
            \alpha(s_1,z_1) = s_1 - \frac{s_1^2}{2} f(z_1) \le s_1 - \frac{s_1^2}{2} f(z_2) < s_2 - \frac{s_2^2}{2} f(z_2) = \alpha(s_2,z_2),
        \]
    which contradicts $\alpha(s_1,z_1) = \alpha(s_2, z_2)$. Here, we used the fact that, for every $z \in Y$, and every $\delta > 0$ such that $ \delta \norm{f}_{\infty} < 1/2$, the map $\alpha^1(\cdot,z)$ is strictly increasing on $[0,\delta]$.

    We now prove that $\alpha$ is an immersion. To this end, we compute:
        \begin{equation}
            \label{eq:alpha-diff}
                d \alpha_{(s,z)}(t,v) = \big(d \alpha^1_{(s,z)}(t,v), d \alpha^2_{(s,z)}(t,v)\big) = \bigg( \big(1 - s f(z) \big) t - \frac{s^2}{2} df_z(v), \frac{t}{2} \nabla f(\psi_s(z)) + d (\psi_s)_z(v) \bigg).
        \end{equation}
    Fixing $(\ell,w) \in \R \times T_{\psi_s(z)} Y $, we have to solve $d \alpha_{(s,z)}(t,v)=(\ell,w)$. We consider $v_0, v_1 \in T_z Y$ such that $d (\psi_s)_z(v_0) = w$ and $d (\psi_s)_z(v_1) = -\nabla f(\psi_s(z))/2$. In particular, $v_1$ does not depend on $s$, see for example~\cite[Equation~(9.17)]{Lee-Smooth13}, and thus $v_1=- \nabla f(z)/2$. We define $t_* \in \R$ by setting
        \begin{align*}
            t_* = \bigg( 1 - s f(z) - \frac{s^2}{2} df_z(v_1) \bigg)^{-1} \Big( \ell + \frac{s^2}{2} df_z(v_0) \Big) = \bigg( 1 - s f(z) + \frac{s^2}{4} \abs{\nabla f(z)}^2 \bigg)^{-1} \Big( \ell + \frac{s^2}{2} df_z(v_0) \Big).
        \end{align*}
    We observe that the quantity appearing inside the first bracket in the definition of $t_*$ is strictly positive on $[0,\delta] \times Y$, and therefore $t_*$ is well-defined. At this point, one verifies that $d \alpha_{(s,z)}(t_*,v_0+t_*v_1)=(\ell,w)$, thereby concluding the proof that $\alpha$ is an embedding.

    We proceed to show that equation~\eqref{eq:reg-cond-3} holds. First, we prove that there exists a smooth symmetric two-tensor field $H \in \Gamma(\mathcal{S}^2([0,\delta] \times Y))$ such that
        \begin{equation} \label{eq:reg-cond-5}
            \bar{g} = ds^2 + s^2 H(s,z), 
        \end{equation}
    and such that $\iota_0^* H$ defines a Riemannian metric on $Y$, where $\iota_c(z)=(c,z)$ for all $c \ge 0$.
        
    Once this is done, the metric in~\eqref{eq:reg-cond-5} satisfies the assumptions of~\cite[Theorem~1.2]{MelWun04}, which directly implies the existence of $\Upsilon$ such that~\eqref{eq:reg-cond-3} holds.  

    In order to prove~\eqref{eq:reg-cond-5}, we note that
        \begin{equation}
            \label{eq:split1}
                \alpha^*\big(dr^2 + r^2 h(r)\big) = \alpha^* \big( dr^2 \big) + (\alpha^1)^2 \alpha^*\big( h(r) \big).
        \end{equation}
    From~\eqref{eq:alpha-diff}, we obtain
        \begin{equation}
            \label{eq:split2}
            \alpha^* \big( dr^2 \big) = \big( 1 - s f(z) \big)^2 ds^2 - \big(  1 - s f(z)  \big) \frac{s^2}{2} ds \odot df_z + \frac{s^4}{4} df_z \otimes df_z,
        \end{equation}
    where $ds \odot df_z = ds \otimes df_z + df_z \otimes ds$.
    Therefore, we have
        \begin{align}
            \notag
            \big(1+ 2 s f(z)\big) \alpha^* \big( dr^2 \big) = & ds^2 - s^2 f(z)^2 \big(3 - 2 s f(z)\big) ds^2 \\[1ex] \label{eq:split3}
            & - \big(1+2sf(z)\big) \big( 1 - s f(z)  \big) \frac{s^2}{2} ds \odot df_z + \big(1+ 2sf(z)\big) \frac{s^4}{4} df_z \otimes df_z.
        \end{align}
    By combining~\eqref{eq:split1},~\eqref{eq:split3} and the identity $\alpha^1(s,z)=s(1-sf(z)/2)$, we derive~\eqref{eq:reg-cond-5}.
    
    For later use, we write the following expressions in a more explicit form
        \begin{equation*}
            \label{eq:extra-1}
                \alpha^*\big(dr^2 + r^2 h(r)\big)(\partial_s, \partial_{z_j}), \quad \text{and} \quad \alpha^*\big(dr^2 + r^2 h(r)\big)(\partial_{z_i}, \partial_{z_j}).
        \end{equation*}
    From~\eqref{eq:split2}, we deduce that
        \begin{gather}
            \label{eq:extra-2}
            \alpha^* \big( dr^2 \big)(s,z)(\partial_s, \partial_{z_j}) = - \big(  1 - s f(z)  \big) \frac{s^2}{2} \frac{\partial f}{\partial z_j}(z), \\[1ex] \label{eq:extra-2B}
            \alpha^* \big( dr^2 \big)(s,z)(\partial_{z_i}, \partial_{z_j}) = \frac{s^4}{4} \frac{\partial f}{\partial z_i}(z) \frac{\partial f}{\partial z_j}(z).
        \end{gather}
    Similarly from~\eqref{eq:alpha-diff},
        \begin{gather}
            \label{eq:extra-3}
            \alpha^* \big( r^2 h(r) \big)(s,z)(\partial_s, \partial_{z_j}) = s^2 \big( 1 - \frac{s}{2} f(z) \big)^2 h(\alpha^1(s,z))(\alpha^2(s,z)) \bigg( \frac{1}{2}\nabla f(\psi_s(z)),d(\psi_s)_z(\partial_{z_j})\bigg), \\[1ex] \label{eq:extra-3B}
            \alpha^* \big( r^2 h(r) \big)(s,z)(\partial_{z_i}, \partial_{z_j}) = s^2 \big( 1 - \frac{s}{2} f(z) \big)^2 h(\alpha^1(s,z)) (\alpha^2(s,z)) \big(d(\psi_s)_z(\partial_{z_i}),d(\psi_s)_z(\partial_{z_j})\big).
        \end{gather}
    In particular, from~\eqref{eq:extra-2},~\eqref{eq:extra-2B},~\eqref{eq:extra-3}, and~\eqref{eq:extra-3B}, we obtain
        \begin{gather}
            \label{eq:extra-4}
            H(0,z)(\partial_s, \partial_{z_j}) = \lim_{s \to 0^+} s^{-2} \bar{g}(s,z)(\partial_s, \partial_{z_j}) = - \frac{1}{2} \frac{\partial f}{\partial z_j}(z) + \frac{1}{2} h_0(z)\big(\nabla f(z), \partial_{z_j} \big) = 0, \\[1ex] \label{eq:extra-4B}
            H(0,z)(\partial_{z_i}, \partial_{z_j}) = \lim_{s \to 0^+} s^{-2} \bar{g}(s,z)(\partial_{z_i}, \partial_{z_j}) = h_0(z)(\partial_{y_i}, \partial_{y_j}).
        \end{gather}
    Since~\eqref{eq:reg-cond-5} implies that $\iota_s^*H = s^{-2} \iota_s^* \bar{g}$. Then, from~\eqref{eq:extra-2B} and~\eqref{eq:extra-3B}, we conclude that
         \begin{equation}
            \label{eq:H-explicit}
                (\iota_s^* H)(z) =\big( 1 + 2 s f(z) \big) \bigg( \frac{s^2}{4} df_z \otimes df_z + \big( 1 - \frac{s}{2} f(z) \big)^2 \psi_s^* \big( h(\alpha^1(s,z)) \big)(z) \bigg).
        \end{equation}

    It remains to prove~\eqref{eq:reg-cond-4}. We claim that
        \begin{equation} \label{eq:diff-ID}
            d \Upsilon_{(0,p)} = \mathrm{id}, \qquad \forall p \in Y.
        \end{equation}
    Since the restriction of $\Upsilon$ to $\{ 0 \} \times Y$ is the identity map, it follows that
        \begin{equation*}
            %\label{eq:U-yy-EXTRA}
                d \Upsilon_{(0,p)}(0,v) = (0,v), \qquad \forall p \in Y, \forall v \in T_y Y,
        \end{equation*}
    which, when expressed in local coordinates, becomes
        \begin{equation}
            \label{eq:U-yy}
                \frac{\partial \Upsilon^i}{\partial p_j}(0,p) = \delta_{ij}, \qquad \forall p \in Y, \forall j \in \{ 1, \dots, n-1 \},
        \end{equation}
    where $\delta_{ij}$ denotes the Kronecker delta. Using~\eqref{eq:reg-cond-3} and~\eqref{eq:reg-cond-5}, we can write
        \begin{equation*}
            %\label{eq:H-explicit-pre}
            dx^2 + x^2 \tilde{h}(x) = \Upsilon^* \big( ds^2 + s^2 H(s,z) \big).
        \end{equation*}
   As a consequence, we obtain the following identities:
        \begin{gather}
            \label{eq:1-x-1}
            1 = \bigg( \frac{\partial \Upsilon^s}{\partial x}\bigg)^2+ (\Upsilon^s)^2 H(\Upsilon) \bigg(\frac{\partial \Upsilon}{\partial x}, \frac{\partial \Upsilon}{\partial x} \bigg), \\[1ex]
            \label{eq:1-y-1}
            0 = \frac{\partial \Upsilon^s}{\partial x} \frac{\partial \Upsilon^s}{\partial p_j} + (\Upsilon^s)^2 H(\Upsilon) \bigg(\frac{\partial \Upsilon}{\partial x}, \frac{\partial \Upsilon}{\partial p_j} \bigg), \\[1ex]
            \label{eq:1-yy-1}
            x^2 \tilde{h}(x)(\partial_{p_i}, \partial_{p_j}) = \frac{\partial \Upsilon^s}{\partial p_i} \frac{\partial \Upsilon^s}{\partial p_j} + (\Upsilon^s)^2 \big( (\Upsilon \circ \iota_x)^*H \big) (p) (\partial_{p_i}, \partial_{p_j}),
        \end{gather}
    where $\Upsilon$ and its derivatives are implicitly evaluated at $(x,p)$, and $\iota_x(p)=(x,p)$.
    
    Taking the limit as $x$ goes to zero in~\eqref{eq:1-x-1}, and using the fact that~$\Upsilon^s(0,p)=0$, we deduce that
        \begin{equation}
            \label{eq:1-x-2}
                \frac{\partial \Upsilon^s}{\partial x}(0,p) = 1, \qquad \forall p \in Y.
        \end{equation}

    Taking the limit as $x$ goes to zero in~\eqref{eq:1-y-1}, and using~\eqref{eq:1-x-2}, we deduce that
        \begin{equation}
            \label{eq:1-y-2}
                \frac{\partial \Upsilon^s}{\partial p_j}(0,p) = 0, \qquad \forall p \in Y, \forall j \in \{ 1, \dots, n-1 \}.
        \end{equation}

    Moreover, differentiating both sides of~\eqref{eq:1-x-1} with respect to $x$, taking the limit as $x$ goes to zero, and using~\eqref{eq:1-x-2}, we deduce that
        \begin{equation}
            \label{eq:2-x-1}
                \frac{\partial^2 \Upsilon^s}{\partial x^2}(0,p) = 0, \qquad \forall p \in Y.
        \end{equation}
        
    Differentiating both sides of~\eqref{eq:1-x-2} and~\eqref{eq:2-x-1} with respect to $p_j$, we deduce that
        \begin{gather}
            \label{eq:2-xy-1}
                \frac{\partial^2 \Upsilon^s}{\partial x \partial p_j}(0,p) = 0, \qquad \forall p \in Y, \forall j \in \{ 1, \dots, n-1 \}, \\[1ex]  \label{eq:3-xxy-1}
                \frac{\partial^3 \Upsilon^s}{\partial x^2 \partial p_j}(0,p) = 0, \qquad \forall p \in Y, \forall j \in \{ 1, \dots, n-1 \}.
        \end{gather}

    Given that $\Upsilon$ is smooth, a Taylor expansion in the $x$-variable, combined with~\eqref{eq:1-y-2},~\eqref{eq:2-xy-1}, and~\eqref{eq:3-xxy-1}, implies that $(\partial \Upsilon^s / \partial p_j)(x,p) = O(x^3)$ as $x$ goes to zero. Therefore, dividing both sides of~\eqref{eq:1-y-1} by $x^2$, taking the limit as $x$ goes to zero, and using~\eqref{eq:U-yy} and~\eqref{eq:1-y-2}, we obtain
        \begin{equation}
            \label{eq:pull-H-1}
                0 = H(0,p)\bigg( \frac{\partial \Upsilon}{\partial x}(0,p), \partial_{z_j} \bigg)
                = H(0,p)\big(\partial_s, \partial_{z_j} \big) + H(0,p)\big(w(p), \partial_{z_j} \big), 
        \end{equation}
    where $w(p) \in T_pY$ is such that $(\partial \Upsilon/\partial x)(0,p)= \partial_s + w(p)$.

    To conclude the proof of~\eqref{eq:diff-ID}, it remains to show that $w(p)=0$. Combining~\eqref{eq:extra-4} and~\eqref{eq:pull-H-1}, we deduce that
        \begin{equation*}
            %\label{eq:Lie1}
                H(0,p)\big(w(p), \partial_{z_j} \big) = 0, \qquad \forall p \in Y, \forall j \in \{ 1, \dots, n-1 \},
        \end{equation*}
    and since $\iota_0^* H$ is a metric on $Y$, this implies that $w(p) = 0$.

    At this point, we are ready to prove~\eqref{eq:reg-cond-4}. We begin by verifying that the first of the two identities holds. We already know that $(\partial \Upsilon^s / \partial p_j)(x,p) = O(x^3)$ and $\Upsilon^s(x,p)=x+O(x^3)$ as $x$ goes to zero. Therefore, dividing both sides of~\eqref{eq:1-yy-1} by $x^2$, taking the limit as $x$ goes to zero, we obtain
        \begin{equation*}
                \tilde{h}(0)(p)(\partial_{p_i}, \partial_{p_j}) = \iota_0^* H(p) (\partial_{p_i}, \partial_{p_j}) = h_0(p)(\partial_{y_i}, \partial_{y_j}),
        \end{equation*}
    where the last identity follows from~\eqref{eq:extra-4B}.

    We now turn to the proof of the second identity in~\eqref{eq:reg-cond-4}. Dividing both sides of~\eqref{eq:1-yy-1} by $x^2$, differentiating with respect to $x$, and recalling that $(\partial \Upsilon^s / \partial p_j)(x,p) = O(x^3)$ and $\Upsilon^s(x,p)=x+O(x^3)$ as $x$ goes to zero, we deduce that
        \begin{equation}
            \label{eq:H-DER}
                \tilde{h}'(0) = \frac{d}{dx} \big( (\Upsilon \circ \iota_x)^*H \big) \Big|_{x=0} = \frac{d}{ds} \iota_s^* H \Big|_{s=0},
        \end{equation}
    where the last identity follows from~\eqref{eq:diff-ID}, and in particular from the fact that $(\partial \Upsilon/\partial x)(0,p) = \partial_s$.
    
    To complete the proof, we compute the derivative of $\iota_s^* H$, whose explicit expression is given in~\eqref{eq:H-explicit}. In particular, we find that
        \begin{equation}
            \label{eq:H-DER-2}
                \frac{d}{ds} \iota_s^* H \Big|_{s=0} = 2 f h_0 - fh_0 + \frac{d}{ds} \psi_s^* \big( h(\alpha^1(s,z)) \big) \Big|_{s=0}.
        \end{equation}
    Moreover it is known, see for example~\cite[Proposition~1.2.1]{Topp06}, that
        \begin{equation}
            \label{eq:H-DER-3}
            \frac{d}{ds} \psi_s^* \big(h(\alpha^1(s,z)) \big) \Big|_{s=0} = \mathcal{L}_{\frac{\nabla f}{2}} h_0 + h'(0) \frac{\partial \alpha^1}{\partial s}(0,\cdot) = \nabla^2_{h_0} f + h'(0),
        \end{equation}
    where $\mathcal{L}_{\nabla f/2} h_0$ denotes the Lie derivative of $h_0$ with respect to the vector field $\nabla f/2$, and we used the identity $\nabla^2_{h_0} f =\mathcal{L}_{\nabla f/2} h_0$.

    Finally, from~\eqref{eq:H-DER},~\eqref{eq:H-DER-2}, and~\eqref{eq:H-DER-3}, we obtain
        \[
            \tilde{h}'(0) = h'(0) + \nabla^2_{h_0} f + f h_0.
        \]
    This completes the proof.
\end{proof}

We will systematically make use of the following result in the sequel:

\begin{corollary}\label{cor:c1extension}
    Let $(M^n,g)$ be a closed manifold with finitely-many conical points $\{ P_1, \dots, P_l \}$. Assume that, for each $i$, condition~\eqref{eq:diffeo} is satisfied with link $Y_i = \Sp^{n-1} / \Gamma_i$, where $\Gamma_i < O(n)$ is a finite subgroup acting freely on $\Sp^{n-1}$. Suppose that, for each $i$, condition~\eqref{s:xi-conf} does not hold.
    
    Then, there exists a positive function $\mathfrak{u} \in C(M) \cap W^{1,2}(M)$, smooth away from the conical points, such that the following properties hold:
    \begin{itemize}
        \item[1)] For every $i$, there exists a diffeomorphism $\tilde{\sigma}_i$ for which the singular manifold $(M^n, \mathfrak{u} g)$ satisfies~\eqref{eq:diffeo} with $\tilde{\sigma}_i$.
        \item[2)] If $\pi_i \colon \Sp^{n-1} \to \Sp^{n-1} / \Gamma_i$ denotes the quotient map, then the pullback metric $(\tilde{\sigma}_i^{-1} \circ (\mathrm{id \times \pi_i}) \circ \Phi)^* ( \mathfrak{u} g)$ extends to a $C^{1,1}$-metric in a neighborhood of $0 \in \R^n$, where $\Phi$ is the map defined in~\eqref{eq:Phimap-def}.
    \end{itemize}
\end{corollary}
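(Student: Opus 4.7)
The plan is to construct $\mathfrak{u}$ as a conformal factor which, near each conical point $P_i$, has the specific form $1+2sf_i(z)$ prescribed by Proposition \ref{l:Melrose}, with $f_i$ chosen so that the induced conformal deformation cancels the first-order term $h_i'(0)$ in the expansion of the link metric. The assumption that \eqref{s:xi-conf} fails is exactly what enables this choice, and the final degree of regularity comes from applying Corollary \ref{cor:metric-smooth-extension} with $k=2$.

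Since \eqref{s:xi-conf} fails at each $P_i$, for every $i=1,\dots,l$ there exists a smooth $\hat{f}_i\colon Y_i\to\R$ with $h_i'(0)=\nabla^2_{h_{i,0}}\hat{f}_i+\hat{f}_i\,h_{i,0}$. Setting $f_i:=-\hat{f}_i$, the formula in Proposition \ref{l:Melrose} then gives $\tilde{h}_i'(0)=h_i'(0)+\nabla^2_{h_{i,0}}f_i+f_i h_{i,0}=0$. I would then fix pairwise disjoint neighborhoods $\mathcal{U}_i$ of the $P_i$ inside the charts $\sigma_i$, together with a cutoff $\chi\in C^\infty_c([0,\infty))$ equal to $1$ on $[0,\delta_0]$ and supported in $[0,2\delta_0]$, and define $\mathfrak{u}(p):=1+2s_i(p)\,f_i(z_i(p))\,\chi(s_i(p))$ for $p\in\mathcal{U}_i\setminus\{P_i\}$ (with $(s_i,z_i):=\sigma_i(p)$), and $\mathfrak{u}\equiv 1$ elsewhere on $M$. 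Choosing $\delta_0$ small enough so that $4\delta_0\max_i\|f_i\|_{L^\infty(Y_i)}<1$ guarantees that $\mathfrak{u}>0$ everywhere. By construction $\mathfrak{u}$ is smooth on $M\setminus\{P_1,\dots,P_l\}$ and continuous on $M$ with $\mathfrak{u}(P_i)=1$. A short computation in conical coordinates shows that $|\nabla_g\mathfrak{u}|_g$ is bounded near each $P_i$ (the factor $s^{-2}$ in the angular part of the metric is compensated by the factor $s^2$ arising from differentiating $s\chi(s)f_i(z)$ in $z$), so $\mathfrak{u}$ is globally Lipschitz, hence belongs to $W^{1,2}(M)$.

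I then verify properties 1) and 2). Near each $P_i$, the pushforward $(\sigma_i)_*(\mathfrak{u}\,g)$ equals $(1+2sf_i(z))\bigl(ds^2+s^2h_i(s)\bigr)$ for $s\le\delta_0$, which is exactly the metric $\bar g$ of Proposition \ref{l:Melrose}. That proposition supplies a diffeomorphism $\Upsilon_i$ with $\Upsilon_i^*\bar g=dx^2+x^2\tilde h_i(x)$, $\tilde h_i(0)=h_{i,0}$, $\tilde h_i'(0)=0$. Setting $\tilde\sigma_i:=\Upsilon_i^{-1}\circ\sigma_i$ on a suitable smaller neighborhood of $P_i$ yields property 1). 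For property 2), a Taylor expansion in $x$ combined with $\tilde h_i'(0)=0$ writes $\tilde h_i(x)-h_{i,0}=x^2\nu_i(x)$ for some smooth $\nu_i\colon[0,\eps]\to\Gamma(\mathcal S^2(Y_i))$; pulling back via $\mathrm{id}\times\pi_i$ preserves this expansion, so the metric on $(0,\eps)\times\Sp^{n-1}$ satisfies the hypothesis of Corollary \ref{cor:metric-smooth-extension} with $k=2$, giving the desired $C^{k-1,1}=C^{1,1}$ extension at $0\in\R^n$.

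The main conceptual point is the bookkeeping of orders: it is the vanishing of $\tilde h_i'(0)$ (and no higher derivative) that produces the sharp $C^{1,1}$ regularity through $k=2$, which is both the best one should expect from a single application of Proposition \ref{l:Melrose} and exactly what is needed in the sequel. Global positivity, continuity and $W^{1,2}$-regularity of $\mathfrak{u}$ are routine, provided the cutoff size $\delta_0$ is taken small enough in terms of $\max_i\|f_i\|_{L^\infty(Y_i)}$.
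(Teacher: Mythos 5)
Your overall strategy is the same as the paper's (pick $f_i$ from the failure of \eqref{s:xi-conf}, build a conformal factor of the form $1+2sf_i$ near each conical point with cutoffs, invoke Proposition~\ref{l:Melrose} to get conical normal form with $\tilde h_i'(0)=0$, then Taylor-expand and apply Corollary~\ref{cor:metric-smooth-extension} with $k=2$), and the continuity/positivity/$W^{1,2}$ discussion is fine. However, there is a genuine gap in the key step: you identify $(\sigma_i)_*(\mathfrak u g)=(1+2sf_i(z))\big(ds^2+s^2h_i(s)\big)$ with ``the metric $\bar g$ of Proposition~\ref{l:Melrose}'', but by \eqref{eq:def-metric-bar} that proposition concerns $\bar g=(1+2sf(z))\,\alpha^*\big(dr^2+r^2h(r)\big)$, i.e.\ the conformal factor multiplies the \emph{pullback} of the conical metric through the embedding $\alpha$ of \eqref{eq:def-alpha-map}, not the conical metric itself. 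With your definition of $\mathfrak u$ (factor $1+2s_if_i(z_i)$ expressed directly in the chart coordinates of $\sigma_i$), the resulting local metric is a different one: writing it as $ds^2+s^2H(s,z)$ forces $H$ to contain the term $2f(z)s^{-1}\,ds\otimes ds$, so it is not of the form \eqref{eq:reg-cond-5} with $H$ smooth up to $s=0$, and neither Proposition~\ref{l:Melrose} nor the normal-form theorem of \cite{MelWun04} that it relies on can be cited for it. The radial renormalization $r=s-\tfrac{s^2}{2}f(z)$ together with the angular flow $\psi_s$ — i.e.\ exactly the map $\alpha$ you dropped — is what removes the $O(s)\,ds^2$ and $O(s^2)\,ds\odot df$ terms; without it your argument either needs a separate proof that the simpler conformal metric can still be brought to conical form with the stated $\tilde h_i'(0)$, or a correction of the construction.

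The fix is to define $\mathfrak u$ as the paper does: require $\mathfrak u\circ\sigma_i^{-1}=F_i\circ\alpha_i^{-1}$ near $P_i$, with $F_i(s,z)=1+2sf_i(z)$, so that $\alpha_i^*(\sigma_i)_*(\mathfrak u g)=\bar g$ exactly matches \eqref{eq:def-metric-bar}; Proposition~\ref{l:Melrose} then applies verbatim and the correct chart is $\tilde\sigma_i=\Upsilon_i^{-1}\circ\alpha_i^{-1}\circ\sigma_i$ (your $\tilde\sigma_i=\Upsilon_i^{-1}\circ\sigma_i$ misses the factor $\alpha_i^{-1}$ for the same reason). Since $F_i\circ\alpha_i^{-1}$ agrees with $1+2rf_i(y)$ only to first order in the radial variable, the two conformal factors differ at order $r^2$, which is precisely the order that your bookkeeping for $\tilde h_i'(0)$ cannot afford to treat casually; so this is not a purely notational slip but a step that, as written, does not follow from the results you cite. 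The remainder of your argument (choice of $f_i=-\hat f_i$, $\tilde h_i(x)=h_{i,0}+x^2\nu_i(x)$, equivariant pullback by $\mathrm{id}\times\pi_i$, and Corollary~\ref{cor:metric-smooth-extension} with $k=2$ giving $C^{1,1}$) coincides with the paper once the construction of $\mathfrak u$ is corrected.
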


\begin{proof}
    We recall that by definition of conical point, for every $i \in \{ 1, \dots, l \}$, there exist an open neighborhood $\mathcal{U}_i$ of $P_i$ and a diffeomorphism $\sigma_i$ such that
        \[
            \sigma_i \colon \mathcal{U}_i \setminus \{ P_i \} \to (0,1) \times \Sp^{n-1} / \Gamma_i, \quad \text{and} \quad (\sigma_i)_{*} g = ds^2 + s^2 h_i(s).
        \]
    Moreover, since we assume that condition~\eqref{s:xi-conf} does not hold, for every $i \in \{1, \dots, l\}$ there exists a smooth function $f_i \colon \Sp^{n-1} / \Gamma_i \to \R$ such that
        \begin{equation} \label{local-structure}
            h_i'(0) = -\nabla^2_{h_{\Gamma_i}} f_i - f_i h_{\Gamma_i},
        \end{equation}
    where $h_i(0)=h_{\Gamma_i}$ and $h_{\Gamma_i}$ denotes the metric of constant sectional curvature one on $\Sp^{n-1} / \Gamma_i$.

    Using cut-off functions, one can define a positive  $\mathfrak{u} \in C^{\infty}(M \setminus \{ P_1, \dots, P_l  \})$ that, locally around each conical point, satisfies
        \[
            \mathfrak{u} \circ \sigma_i^{-1} = F_i \circ \alpha_i^{-1}, \qquad F_i(s,z):=1 + 2sf_i(z),
        \]
    where $\alpha_i$ is the embedding defined in~\eqref{eq:def-alpha-map}. Since $\alpha_i^1(0,y)=0$, we deduce that $\mathfrak{u}$ extends to a continuous function on the whole $M$. In particular, it holds that
        \[
            \abs{\nabla_{g_i} F_i}^2 = 4 \big( f_i^2 + \abs{\nabla_{h_{\Gamma_i}} f_i}^2 \big), \qquad g_i := ds^2 + s^2 h_{\Gamma_i}.
        \]
    This observation, combined with the identity $d (\alpha_i)_{(0,z)}(t,v)=(t, t \nabla f_i(z)/2 + v)$, see~\eqref{eq:alpha-diff}, implies that the function $\mathfrak{u} \in W^{1,2}(M)$. At this point, we notice that by construction
        \[
           \bar{g} = \alpha_i^* (\sigma_i)_* (\mathfrak{u} g),
        \]
    where $\bar{g}$ is the metric defined in~\eqref{eq:def-metric-bar}. Therefore, the conclusion of the first part of the corollary follows from Proposition~\ref{l:Melrose} taking $\tilde{\sigma}_i = \Upsilon_i^{-1} \circ \alpha_i^{-1} \circ \sigma_i$. Moreover, from~\eqref{local-structure} and the identity~\eqref{eq:reg-cond-4} proved in Proposition~\ref{l:Melrose}, we deduce that
        \[
            (\tilde{\sigma}_i)_*(\mathfrak{u} g) = dx^2 + x^2 \tilde{h}_i(x), \quad \text{with} \quad \tilde{h}_i(x) = h_{\Gamma_i} + O(x^2). 
        \]
    Since $\tilde{h}_i(x)$ is smooth, we can write $\tilde{h}_i(x) = h_{\Gamma_i} + x^2 \nu_i(x)$, for a smooth function $\nu_i$ with values in the space of smooth symmetric two-tensor fields on $\Sp^{n-1} / \Gamma_i$. This implies that the metric $(\mathrm{id} \times \pi_i)^* (\tilde{\sigma}_i)_*(\mathfrak{u} g)$ satisfies condition~\eqref{eq:reg-cond} with $k=2$. In particular, the conclusion of the second part of the corollary is a direct consequence of Corollary~\ref{cor:metric-smooth-extension}.
\end{proof}

As a consequence of the previous corollary, we might assume that $\tilde{g}(=\Phi^*(\tilde{g}))$ extends $C^{1,1}$ at the origin.
It follows that the Christoffel symbols of $\tilde{g}$ are $C^{0,1}$, which, in turn, implies the existence and uniqueness of solutions for the geodesic equation. It is therefore possible to define geodesic normal coordinates.
 In such coordinates around $0$ (and in the above notation), we have the following expansion for $\tilde{g}$ and for the volume element $d\mu_{\tilde{g}}$:
    \begin{equation}\label{eq:lift-metr-exp}
        \tilde{g}_{ij}(x)=\delta_{ij}+O^{''}(\abs{x}^2), \qquad d\mu_{\tilde{g}}(x)=\big(1+O^{''}(\abs{x}^2)\big)\,dx,
    \end{equation}
    where $x\in B_{2\delta}(0)$ and $O^{''}(\abs{x}^2)$ denotes an error term $\Xi\in C^{1,1}(\overline{B}_{2\delta}(0))$ such that $\Xi$ is smooth outside $0$ and $\abs{\nabla^s\Xi(x)}\leq C\abs{x}^{2-s}$ for $s=0,1,2$ and for a constant $C>0$.

\section{\texorpdfstring{The Green's function of $L_g$ on conical manifolds}{The Green's function of Lg on conical manifolds}}\label{sec:greenfunct}

We begin this section by stating the existence of the Green's function for the conformal Laplacian $L_g$ on manifolds with conical points. We then perform a parametrix in order to obtain an expansion  near a conical point $P$ with link $\R\projective^3$; such an expansion will be fundamental in estimating the Yamabe quotient, which will be done in the next sections.

\smallskip Throughout this section  we will assume that \underline{the metric $g$ admits 
a local $\Z_2$-lift of class $C^{1,1}$}  near all conical points, which 
holds in particular when, in the notation of \eqref{eq:diffeo}, one has $h'(0)=0$, see Corollary 
\ref{cor:metric-smooth-extension}. This condition will not be necessary though for Proposition \ref{prop:green-existence}.

\subsection{Existence of Green's function}

As remarked in \cite{freguglia-malchiodi-2024-pre}, in our setting 
the scalar curvature $R_{{g}}$ is bounded by the inverse of the distance 
from the singular set, and indeed, when \eqref{s:xi-conf} is not verified, even uniformly bounded after a proper conformal change of metric, see Corollary \ref{cor:c1extension}. By this reason, $R_{{g}} \in L^q(M)$ for 
some $q > n/2$, and we fit into the framework of \cite{ACM}. Moreover, as in 
\cite{KW-JDG-75}, it is possible to prove that the sign of $\Y(M,[g])$ coincides with that of the first eigenvalue $\lambda_1(L_g)$, 
defined via Rayleigh's quotient, and since we are under assumption \eqref{eq:no-att}, we have that $\lambda_1(L_g) > 0$. The next result 
can be deduced from  \cite{Maz-edge}, but we provide a short proof for the reader's convenience and for later purposes.

\begin{proposition}\label{prop:green-existence}
    Let $(M,g)$ be a smooth stratified space with an iterated cone-edge metric, as defined in~\cite[Section~2.1]{ACM}. Assume $n \in \{3,4,5\}$, $R_g \in L^{q}(M)$ for some $q > n/2$ and  that $\lambda_1(L_g) > 0$. Then, there exists $G \colon M \times M \to (0,+\infty)$ such that
        \begin{equation*}
            %\label{eq:ident-Green}
                \int_{M} G(x,y) \phi(x) \, d\mu_g(x) = (L_g^{-1} \phi)(y), \qquad \forall \phi \in C(M),
        \end{equation*}
    where $L_g^{-1}$ denotes the inverse of the conformal Laplacian associated to the metric $g$. Moreover,
        \begin{equation*}
            %\label{eq:ident-Green2}
                \sup_{y \in M} \norm{G(\cdot,y)}_{L^r(M)} < +\infty, \qquad \forall r\in \bigg[1,\frac{n}{n-2}\bigg).
        \end{equation*}
\end{proposition}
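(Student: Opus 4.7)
The plan is to realize $G$ as the Schwartz kernel of $L_g^{-1}$ via a duality argument. Since $\lambda_1(L_g) > 0$ and $R_g \in L^q(M)$ with $q > n/2$, the bilinear form
\[
\mathcal{B}(u,v) := \int_M \bigl(a\, \nabla_g u \cdot \nabla_g v + R_g\, u\, v\bigr) \, d\mu_g
\]
is continuous and coercive on $W^{1,2}(M)$: continuity follows from the stratified Sobolev embedding $W^{1,2}(M) \hookrightarrow L^{\frac{2n}{n-2}}(M)$ from~\cite{ACM} combined with H\"older's inequality, while coercivity is equivalent to $\lambda_1(L_g) > 0$. Lax--Milgram therefore yields a bounded inverse $L_g^{-1} \colon (W^{1,2}(M))^* \to W^{1,2}(M)$.

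Next, I would invoke the Moser iteration scheme adapted to the singular setting in~\cite{ACM} to upgrade this to a bounded map $L_g^{-1} \colon L^{r'}(M) \to C(M)$ for every $r' > n/2$, with uniform estimate $\|L_g^{-1}\phi\|_{L^\infty(M)} \le C_{r'}\|\phi\|_{L^{r'}(M)}$. Since $r' > n/2$ corresponds precisely to $r < n/(n-2)$, for every $y \in M$ the point evaluation $\phi \mapsto (L_g^{-1}\phi)(y)$ is a bounded linear functional on $L^{r'}(M)$ of norm at most $C_{r'}$, uniformly in $y$. Its Riesz representative $G(\cdot,y) \in L^r(M)$ then satisfies both the integral identity of the proposition and the claimed uniform $L^r$-bound.

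Positivity of $G$ is obtained in two steps. For $\phi \in L^{r'}(M)$ nonnegative, testing $L_g v = \phi$ against $v_- := \min(v,0) \in W^{1,2}(M)$ gives $\mathcal{B}(v_-,v_-) \le 0$, and coercivity forces $v_- \equiv 0$; varying $\phi$ among nonnegative densities concentrated near arbitrary points yields $G(\cdot,y) \ge 0$. The strong maximum principle applied to $L_g G(\cdot,y) = 0$ on the regular part away from $y$ (valid because $\lambda_1(L_g) > 0$) upgrades this to pointwise strict positivity, extended to conical points by continuity of $G(\cdot,y)$ away from the diagonal. The principal obstacle is verifying that the $L^{r'} \to L^\infty$ estimate and the continuity of $L_g^{-1}\phi$ up to all singular strata hold with constants independent of $y$; both are instances of the elliptic theory on stratified spaces with $L^q$-integrable scalar curvature systematically developed in~\cite{ACM}, which is what I would rely on.
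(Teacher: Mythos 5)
Your backbone coincides with the paper's: everything is reduced to showing that $L_g^{-1}$ is bounded from $L^{p}(M)$ to $L^\infty(M)$ for every $p>n/2$, and the kernel with $\sup_y\norm{G(\cdot,y)}_{L^r}<\infty$, $r<\tfrac{n}{n-2}$, is then obtained by dualizing this bound pointwise in $y$. The difference is in how the two black boxes are filled. The paper gets the $L^p\to L^\infty$ bound from the variational solvability in $W^{1,2}(M)$ (via the compact embeddings of \cite{ACM}) together with the embedding $H^{2,p}(M)\hookrightarrow L^\infty(M)$ for $p>n/2$, and then extracts the kernel, its positivity and the uniform $L^{r}$ bound in one stroke from Gel'fand's theorem as stated in Kovalenko--Semenov; the restriction $n\in\{3,4,5\}$ enters only to guarantee $u\in L^p$ before applying that embedding. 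You instead run Moser iteration (which only needs the Sobolev inequality on the stratified space, so your argument does not actually use $n\le 5$, consistently with the paper's remark that this hypothesis is inessential) and then rebuild the kernel by hand via Riesz representation of the evaluation functionals, proving nonnegativity by testing against $v_-$. This is a legitimate, more self-contained route, provided the evaluation $\phi\mapsto (L_g^{-1}\phi)(y)$ is genuinely defined at \emph{every} $y$, including points of the singular strata; you correctly identify this continuity-up-to-the-stratum statement as the crux and defer it to the regularity theory of \cite{ACM}, whereas the abstract kernel theorem used in the paper needs only the $L^p\to L^\infty$ bound and sidesteps this issue.

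There is, however, one step that does not work as written: strict positivity at the conical points. From $G(\cdot,y)\ge 0$ and the strong maximum principle (Harnack) on the regular part you get $G(x,y)>0$ for regular $x\ne y$, but ``extended to conical points by continuity'' only yields $G\ge 0$ there --- a continuous nonnegative function that is positive on a dense open set can perfectly well vanish at a limit point. The same issue reappears when $y$ itself is singular: to know $G(\cdot,y)\not\equiv 0$ you need $(L_g^{-1}\phi)(y)>0$ for some $\phi\ge0$, which is again strict positivity of a nonnegative supersolution \emph{at} a singular point. To close this you need a Harnack inequality (equivalently a strong minimum principle) valid up to the singular stratum --- available in this setting, e.g.\ by lifting through the local $C^{1,1}$ cover when the link is $\Sp^{n-1}/\Gamma$, or from the elliptic theory on stratified spaces with $L^q$ potential --- but it must be invoked explicitly; continuity alone does not deliver it. Also, a small remark: the strong maximum principle on the regular part does not need $\lambda_1(L_g)>0$; for a nonnegative solution one simply rewrites the equation as $-a\Delta G+R_g^+G=R_g^-G\ge0$ and applies Harnack, while $\lambda_1>0$ is what you need (and use) for coercivity and the $v_-$ truncation argument.
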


\begin{proof}
     We first claim that $T:=L_g^{-1}$ is a continuous and linear operator from $L^p(M)$ to $L^{\infty}(M)$, for every $p > n/2$. Given the claim, the conclusion follows directly from Gel'fand's theorem as stated in~\cite[Section~3,~p.~120]{Kovalenko-Semenov78}. Here, we notice that if $p = n/2$, then the conjugate exponent satisfies $p'=n/(n-2)$. Now, we observe that if $n \in \{ 3,4,5 \}$, then $n/2 < 2^*:=2n/(n-2)$, therefore it is enough to prove the claim for every $p \in (n/2, 2^*)$. We begin by recalling that for every $f \in L^{\frac{2n}{n+2}}(M)$, there exists a unique $u \in W^{1,2}(M)$ such that $L_g u = f$. Indeed, as shown in~\cite[Propositions~1.6 and~2.2]{ACM}, for every $\nu \in [1,2^*)$, the inclusion $W^{1,2}(M) \subset L^\nu(M)$ is compact, while for $\nu=2^*$ the inclusion is continuous but not compact. By combining the above results with the assumption $\lambda_1(L_g) > 0$, the function $u$ can be obtained as the minimizer of the following well-defined energy:
        \[
           W^{1,2}(M) \ni u \longrightarrow \frac{1}{2}\int_M a \abs{\nabla u}_g^2 + R_g u^2 \, d\mu_g - \int_M fu \, d\mu_g.
        \]
     Fix now $p \in (n/2,2^*)$, and consider $f \in L^p(M) \subset L^{\frac{2n}{n+2}}(M)$. Let $u \in W^{1,2}(M)$ be the unique solution to $L_gu=f$ previously obtained. It holds that $u \in L^{2^*}(M) \subset L^p(M)$, in particular $u \in H^{2,p}(M)$, where $H^{2,p}(M) := \{ u \in L^p(M) : L_g u \in L^p(M) \}$. As for the smooth case, see~\cite[Equation~(2.5)]{CaLyVe23},  for every $p > n/2$, the space $H^{2,p}(M)$ embeds continuously into $L^{\infty}(M)$. The proof is complete.
\end{proof}

\begin{remark}
    The dimension assumption in the previous proposition is not essential and can be removed, provided one knows that $u \in W^{1,2}(M)$ and $L_gu \in L^p(M)$ imply $u \in L^p(M)$. For $n \in \{3,4,5\}$, this follows directly from  Sobolev's embedding. The implication still holds for larger values of $n$. However, since this article is mainly concerned with the case $n=4$, we  decided to keep the proof short by restricting to a low-dimensional setting.
\end{remark}

\subsection{The Green's function near a conical point}

So far, we proved that there exists a Green's function $G$ for $L_g$ over $M$. Let $P\in M$ be a conical point, and let $\tilde{g}:={\sigma}_P^*g$ be the equivariant lift of $g$ defined in a small neighborhood of $P$ as in \eqref{eq:prj-def}. We recall that  $\tilde{g}$ extends to a $C^{1,1}$-metric over $B_{2\delta}(0)\subset \R^4$ which is smooth outside the origin.

Let $q\in B_\delta^g(P)$ and define $G_q:=G(q,\cdot)$. We denote by $h_q$ the equivariant lift of $G_q$ over $\partial B_\delta(0)$:
\begin{equation*}
    h_q(y):= G_q({\sigma}_P(y)), \quad y\in \partial B_\delta(0).
\end{equation*}
We notice that the lift metric $\tilde{g}$ has, by construction, 
uniformly bounded second derivatives, and it is therefore of class $W^{2,p}$ for all $p > 1$. 
Using for example the arguments in \cite[Section 4]{ACR-25}, given $x\in B_\delta(0)$, we deduce the existence (and uniqueness) of the Green's function $\tilde{G}_x$ of $L_{\tilde{g}}$ on $B_\delta(0)$ with Dirichlet boundary datum:
\begin{equation}\label{eq:dir-green-lift}
    \begin{cases*}
    L_{\tilde{g}}\tilde{G}_x=4a\pi^2\delta_x & \text{in $B_\delta(0)$} \\
    \tilde{G}_x=0 & \text{in $\partial B_\delta(0)$.}
    \end{cases*}
\end{equation}
We know that $\tilde{G}_x\in W^{2,p}_{{loc}}({B}_\delta\backslash\{x\})\cap C^{\infty}_{{loc}}(\overline{B}_\delta\backslash \{0,x\})$ for any $p\in [1,+\infty)$.
Let $H_x$ be the solution to
\begin{equation*}
    \begin{cases*}
        L_{\tilde{g}}H_x=0 & \text{in $B_\delta(0)$} \\
        H_x=h_{{\sigma}_P(x)} & \text{in $\partial B_\delta(0)$;}
    \end{cases*}
\end{equation*}
as above, we have that $H_x\in W^{2,p}({B}_\delta)\cap C^{\infty}_{{loc}}(\overline{B}_\delta\backslash \{0\})$ for any $p\in[1,+\infty)$ due to the regularity of $\tilde{g}$. We also notice that $H_x(y)=H_x(-y)$ $\forall y\in B_\delta$. Indeed, $h_{{\sigma}_P(x)}$ is antipodally symmetric by construction, so we easily see that $\tilde{f}(y):=H_x(-y)$ is another solution for the same problem; however, the solution is unique.

At this point, we claim that 
\begin{equation}\label{eq:green-relations}
    G_{{\sigma}_P(x)}({\sigma}_P(y))= \tilde{G}_x(y)+\tilde{G}_{-x}(y)+ H_x(y), \quad \forall x,y\in B_\delta(0).
\end{equation}
Indeed, the above right-hand side is equivariant, so its projection through $\sigma_P$ is well-defined. The claim then follows from the uniqueness of $G_q$. 
By virtue of \eqref{eq:green-relations}, if we are able to get an expansion for $\tilde{G}_x$, $x\in B_\delta(0)\backslash \{0\}$, then we also obtain an expansion for $G_{{\sigma}_P(x)}$; this will be our next objective.

\subsection{Parametrix of the Green's function for the lifted metric}\label{subseq-parametrix}

In view of \eqref{eq:green-relations}, in order to obtain an expansion for the Green's function on $M$, it is enough to compute a local expansion of the Green's function $\tilde{G}_x$ for the lifted metric $\tilde{g}$.
Let $x\in B_{\delta/4}(0)\setminus \{0\}$ and consider geodesic normal coordinates $\{z^i\}$ centered at $x$ (notice that $\tilde{g}$ is smooth near $x$). Given $r:=\abs{z}$, we start by computing $L_{\tilde{g}}(r^{-2})$, which, by formula (2.18) in \cite{gursky-malchiodi-2015-JEMS}, is equal to
\begin{align}
\label{eq:par-remainder}
    L_{\tilde{g}}(r^{-2})=\frac{2a}{r^3}\p_r\log(\sqrt{\abs{\tilde{g}}})+\frac{R_{\tilde{g}}}{r^2};  
    \qquad 0<r <d_{\tg}(x,0)/2, 
\end{align}
(here $\abs{\tilde{g}}$ denotes the volume element of $\tilde{g}$ in normal coordinates).
 At this point, we would like to bound the above remainder in $L^{p}$ for some $p>4$: this would allow us to employ elliptic estimates in order to get a $C^1$-bound. However, this is not possible in general. 
 
\medskip 

In order to derive a better expansion of the volume element and the scalar curvature at a point $x\in B_{\delta/4}(0)\backslash \{0\}$, we need to perform a conformal change of the metric $\tilde{g}$ \acc localized near  $x$''. The basic idea is to employ \emph{conformal normal coordinates} as in \cite{Lee-Parker-87}, but in our case we further need to keep under control the global behavior of the conformal factor in order to derive a precise asymptotic expansion of the Green's function when the basepoint $x$ is approaching the origin. To this purpose, for each point $x\not=0$, 
we define  an \emph{explicit}  conformal factor $f_x$ which is given by a polynomial. 

\medskip

Let $t:=d_{\tilde{g}}(x,0)>0$ and define $\varphi_t$ to be a smooth, radial and monotone decreasing cutoff such that $\varphi_t(s)=1$ for $s\leq t/4$, $\varphi_t(s)=0$ for $s\geq t/2$ and $\abs{\varphi_{t}^{(k)}(s)}\leq C/{t^k}$ $\forall s$, for $k=1,\dots,4$ and for a suitable $C>0$.
 Following \cite[p. 158]{aubinbook}, given $\{z^i\}$ normal coordinates for $\tilde{g}$ centered at $x\not=0$, we can define a polynomial $\bar{f}=\bar{f}_x$ with the property that
 $\bar{f}(z)$ only contains terms which are quadratic and cubic in $z$. In particular, the coefficients of all quadratic terms are linear expressions of $R_{\tg}(x)$ and $R^{\tg}_{ij}(x)$ (the Ricci tensor), while the coefficients of the cubic terms are linear expressions of $\p_k R_{\tg}(x), \partial_k R^{\tg}_{ij}(x)$. The explicit formula for $\bar{f}_x$ is the following:
 \begin{align}
 \notag
     \bar{f}_x(z):=&\frac{1}{4}\sum_{i}\Big[2R^{\tilde{g}}_{ii}(x)-\frac{1}{3}R_{\tilde{g}}(x) \Big](z^i)^2 +\sum_{i<j} R^{\tilde{g}}_{ij}(x)z^iz^j+\frac{1}{6}\sum_i\Big[\p_i R^{\tg}_{ii}(x)-\frac{1}{6}\p_i R_{\tg}(x)\Big](z^i)^3 \\
     \notag
     &+\frac{1}{6}\sum_{i\not= k}\Big[\p_k R^{\tg}_{ii}(x)+2\p_i R^{\tg}_{ik}(x)-\frac{1}{6}\p_kR_{\tg}(x)\Big](z^i)^2 z^k \\
     \label{eq:conformal-factor-f^q}
     &+\frac{1}{3}\sum_{i<j<k}\big(\p_k R^{\tg}_{ij}(x)+\p_i R^{\tg}_{kj}(x)+\p_j R^{\tg}_{ik}(x)\big) z^i z^j z^k.
 \end{align}
 We can now define
\begin{equation}\label{eq:f_x-definition}
f(z)=f_x(z):=\varphi_t(\abs{z})\bar{f}_x(z),
\end{equation}
 and let 
 \begin{equation}\label{eq:gbar-definition}
 \bar{g}=\bar{g}_x:=e^{f_x}\tilde{g}.
 \end{equation}

\begin{remark}\label{rem:equiv-metric}
    Since we aim  to get an expansion for the Green's function on $M$, we would then like to project $\bar{g}$ through $\pi$ in order to obtain a conformal metric on $M$. This requires $\bar{g}$ to be antipodally symmetric, so that we actually need to symmetrize $f_x$ accordingly. However, this does not affect the next results in any way, and we can freely assume from now on $f_x$ to be antipodally symmetric.
\end{remark}
 
 By the conformal change of scalar and Ricci curvature (see \cite[146]{aubinbook}), we see that
 \begin{equation*}
     R_{\bar{g}}(x)=0, \qquad R^{\bar{g}}_{ij}(x)=0.
 \end{equation*}
 Moreover, we also have (cf. \cite[158]{aubinbook})
 \begin{equation*}
     \p_k R_{\bar{g}}(x)=0, \qquad \p_k R^{\bg}_{ij}+\p_i R^{\bg}_{jk}+\p_j R^{\bg}_{ik}=0, \qquad \forall i,j,k.
 \end{equation*}

\begin{remark}
    By looking e.g. at \cite[Proposition 2.5]{freguglia-malchiodi-2024-pre} and recalling the definition of $\tilde{g}$, we know that, for any $k\in \N$, there exists a positive constant $C(k)>0$ such that
    \begin{equation*}
        \abs{\nabla^{k}_{\tilde{g}}R_{\tilde{g}}(x)}\leq\frac{C(k)}{d_{\tilde{g}}(x,0)^k}, \qquad \abs{\nabla^{k}_{\tilde{g}}R^{\tilde{g}}_{ij}(x)}\leq\frac{C(k)}{d_{\tilde{g}}(x,0)^k}, \qquad \text{when $d_{\tilde{g}}(x,0)<\delta/4$}.
    \end{equation*}
\end{remark}

Using the above remark, the definition of $\varphi_t$ and the formulae for conformal change of scalar and Ricci curvature (cf. \cite[p.146]{aubinbook}), we see that, up to $k=4$, similar estimates also  hold for the scalar curvature and Ricci tensor of $\bar{g}=\bar{g}_x$. Moreover, we have that $d_{\tg}(x,0)$ and $d_{\bg}(x,0)$ are comparable, 
so we can use either of them in the right hand side of the estimates. Let us  denote by $\{y^i\}$ the normal geodesic coordinates for $\bar{g}$ around $x$.
For $k=1,\dots,4$ (and a different constant $C(k)$), we also got
\begin{equation*} %\label{eq:curv-decay-con}
    \abs{\nabla^{k}_{\bar{g}}R_{\bar{g}}(y)}\leq\frac{C(k)}{d_{\tg}(x,0)^k},\quad \abs{\nabla^{k}_{\bg}R^{\bar{g}}_{ij}(y)}\leq\frac{C(k)}{d_{\tg}(x,0)^k} \quad \text{when $d_{\bar{g}}(x,0)<\frac{\delta}{2}$, $\abs{y}<\frac{d_{\tg}(x,0)}{2}$}.
\end{equation*}
 As a consequence, we have the following expansions for $\abs{y}<d_{\tg}(x,0)/{2}$, (cf. \cite{aubinbook}, \cite{Lee-Parker-87}):
\begin{gather}
\label{eq:sc-curv-exp}
    R_{\bar{g}}(y)= d_{\tg}(x,0)^{-2}O^{''}\big(\abs{y}^2\big), \\
    \label{eq:vol-det-exp}
    \det(\bg):=\abs{\bar{g}}=1+ d_{\tg}(x,0)^{-2}O^{''}\big(\abs{y}^4\big),  \\
    \label{eq:gbar-metr-expansion}
    \bar{g}_{ij}(y)=\delta_{ij}-\frac{1}{3}W_{kijl}(x)y^ky^l+ d_{\tilde{g}}(x,0)^{-1}O^{''}\big(\abs{y}^3\big).
\end{gather}
Here $W$ is the Weyl tensor and $O^{(k)}\big(\abs{y}^l\big)$ denotes an error term $\Xi(y)$ such that, for a suitable constant $C>0$ (\emph{independent} of $x$), it holds $\abs{\Xi(y)}\leq C\abs{y}^l$ and $\abs{\nabla^m\Xi(y)}\leq C \abs{y}^{l-m}$ for $m=1,\dots,k$ and $\abs{y}<d_{\tg}(x,0)/{2}$. In other words, we highlighted the dependence upon $d_{\tg}(x,0)$ of the higher order error terms in the expansions in the fixed ball of center $x$ and radius $d_{\tg}(x,0)/2$. This will be crucial for the next estimates.

We are now able to estimate the conformal Laplacian (in normal coordinates for $\bar{g}$ at $x$) of the singular term $\abs{y}^{-2}$. We recall that $t:=d_{\tg}(x,0)$; by \eqref{eq:par-remainder}, \eqref{eq:sc-curv-exp} and \eqref{eq:vol-det-exp}, we got
\begin{equation}\label{eq:clapest-1}
   \big\vert L_{\bg}(\abs{y}^{-2})\big\vert\leq C t^{-2},
\end{equation}
for a suitable constant $C>0$ (independent of $x$) and for $\abs{y}<t/2$. 

\medskip

We aim next  to obtain an $L^p$ estimate for $L_{\bg}(\abs{y}^{-2})$ in the whole domain $B_\delta(0)$. However, since the metric $\tilde{g}$ is only $C^{1,1}$ at the origin, we need to slightly modify the \acc test function'' employed (as $d_{\bg}(x,\cdot)$ is not smooth outside $x$). Nevertheless, we are able to obtain the following estimate for the Green's function:
\begin{lemma}\label{lem:Green-exp-dir}
    For every $x\in B_{\delta/4}(0)\backslash\{0\}$, let $\bg=\bg_x$ be the metric defined in \eqref{eq:gbar-definition} and denote by $\{y^i\}$ the normal coordinates for $\bg$ centered at $x$ (which are defined on $B_{\delta/2}(0)$). Let $\bar{G}_x$ be the Green's function for $L_{\bg}$ defined as in \eqref{eq:dir-green-lift}. Then the following expansion holds: 
    \begin{equation}\label{eq:Green-lift-exp}
        \bar{G}_x(y)=\frac{1}{\abs{y}^2}+\phi_x(y), \qquad \forall\, \abs{y}<t/2,
    \end{equation}
    where $\phi_x\in C^1(B_{t/2}^{\bg}(x))$ and, for any $\mu>0$, $\norm{\phi_x}_{C^0(B^{\bg}_{t/2}(x))}\leq Ct^{-\mu}$ and $\norm{\nabla_{\bg} \phi_x}_{C^0(B^{\bg}_{t/2}(x))}\leq C t^{-1-\mu}$, for a positive constant $C>0$ which depends on $\mu$ and $\delta$, but not on $x$.
\end{lemma}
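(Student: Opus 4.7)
The plan is to construct a parametrix for $\bar{G}_x$ in the $\bar{g}$-normal coordinates $\{y^i\}$ at $x$, using the Euclidean fundamental-solution profile $|y|^{-2}$ truncated at a fixed scale, and to deduce the desired bounds on $\phi_x$ by applying elliptic Calder\'on--Zygmund estimates to the correction. Let $\chi\colon [0,\infty)\to [0,1]$ be a smooth cutoff with $\chi\equiv 1$ on $[0,\delta/4]$ and $\chi\equiv 0$ on $[\delta/2,\infty)$, and set
\[
\Psi_x(y):=\chi(|y|)\,|y|^{-2}
\]
in the normal coordinates (valid on $B_{\delta/2}(0)$ by hypothesis), extended by zero outside. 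Since $t<\delta/4$, one has $\Psi_x=|y|^{-2}$ identically on $B^{\bar{g}}_{t/2}(x)$, and $\Psi_x$ vanishes near $\partial B_\delta$.

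Writing $L_{\bar{g}}\Psi_x=4a\pi^2\delta_x+E_x$, I would analyze the remainder in three regions. On the inner ball $|y|\le t/2$, where the conformal normal expansions \eqref{eq:sc-curv-exp}--\eqref{eq:gbar-metr-expansion} hold by construction of the conformal factor $f_x$, the pointwise estimate \eqref{eq:clapest-1} gives $|E_x|\le C t^{-2}$. On the middle annulus $t/2\le |y|\le \delta/4$, where the conformal factor vanishes so $\bar{g}=\tilde{g}$ is $C^{1,1}$ with bounded scalar curvature, the normal-coordinate identities $\bar{g}_{ij}(y)-\delta_{ij}=O(|y|^2)$ and $\Gamma^k_{ij}(y)=O(|y|)$ yield $|L_{\bar{g}}(|y|^{-2})|\le C/|y|^2$. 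On the cutoff annulus $\delta/4\le |y|\le \delta/2$, the commutator $[\Delta_{\bar{g}},\chi](|y|^{-2})$ is uniformly bounded by a constant. Combining these and integrating, a direct computation gives
\[
\|E_x\|_{L^p(B_\delta)}\le C_p\, t^{4/p-2}\qquad \text{for every } p>2.
\]

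Defining $v_x:=\bar{G}_x-\Psi_x$, the function solves the Dirichlet problem $L_{\bar{g}} v_x=-E_x$ in $B_\delta(0)$ with $v_x|_{\partial B_\delta(0)}=0$; well-posedness follows from $\lambda_1(L_{\bar{g}})>0$, itself inherited from $\lambda_1(L_g)>0$. The Calder\'on--Zygmund $W^{2,p}$-theory (applicable since $\bar{g}$ has $C^{1,1}$ coefficients) gives $\|v_x\|_{W^{2,p}(B_\delta)}\le C_p\|E_x\|_{L^p}$; the Sobolev embeddings $W^{2,p}\hookrightarrow C^0$ for $p>2$ and $W^{2,p}\hookrightarrow C^1$ for $p>4$ then yield, upon choosing e.g.\ $p=4/(2-\mu)$ respectively $p=4/(1-\mu)$ for small $\mu$, the bounds $\|v_x\|_{C^0(B_\delta)}\le C(\mu)\,t^{-\mu}$ and $\|v_x\|_{C^1(B_\delta)}\le C(\mu)\,t^{-1-\mu}$ for any $\mu>0$. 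Since $\Psi_x=|y|^{-2}$ on $B^{\bar{g}}_{t/2}(x)$, we conclude $\phi_x=v_x$ on this ball and the claimed estimates transfer.

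The most delicate point is the middle-region analysis: the metric $\bar{g}$ is only $C^{1,1}$ at the singular point $y^*\in B_{\delta/2}(0)$ corresponding to $0\in\R^4$, which sits at distance $\sim t$ from $x$. The normal-coordinate expansions must therefore be used in their $C^{1,1}$ form near $y^*$ (relying only on $L^\infty$ bounds on the Riemann tensor), and the $W^{2,p}$ elliptic estimates must be shown to hold with constants independent of $x$; both points are doable for $C^{1,1}$ metrics but require care. The tracking of the Sobolev exponent $p$ against the $L^p$ growth rate of $E_x$ is precisely what delivers the freedom to pick \emph{any} $\mu>0$ in the final bounds, with constants $C(\mu)$ that necessarily blow up as $\mu\to 0^+$.
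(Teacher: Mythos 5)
Your overall architecture (parametrix with profile $|y|^{-2}$, an $L^p$ bound of order $t^{4/p-2}$ on the remainder, Dirichlet correction via $W^{2,p}$ elliptic estimates with constants independent of $x$, and Sobolev embedding with $p$ close to $2$ resp.\ $4$ to get the $t^{-\mu}$ and $t^{-1-\mu}$ bounds) is exactly the paper's, and the inner-region estimate via \eqref{eq:clapest-1} and the exponent bookkeeping are fine. The gap is in your middle annulus $t/2\le |y|\le \delta/4$. There you keep the profile $|y|^{-2}$ with $|y|=d_{\bg}(x,\cdot)$ and claim $\bar g_{ij}(y)-\delta_{ij}=O(|y|^2)$, $\Gamma^k_{ij}(y)=O(|y|)$ in the $\bg$-normal coordinates at $x$, ``relying only on $L^\infty$ bounds on the Riemann tensor''. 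This is not justified: that annulus contains the point corresponding to $0$, where $\bg=\tg$ is only $C^{1,1}$, and the derivatives of curvature blow up like the inverse distance to $0$ (see the Remark before \eqref{eq:sc-curv-exp}). A second-order expansion of the metric in normal coordinates, and in particular the bound $\p_r\log\sqrt{|\bar g|}=O(|y|)$ needed to run \eqref{eq:par-remainder} with the claimed $C/|y|^2$ bound, requires more than $|Rm|\in L^\infty$; with only Taylor-type arguments the constants degenerate as $x\to 0$, which is precisely the uniformity the lemma must deliver. Worse, $d_{\bg}(x,\cdot)$ need not be $C^2$ at and beyond the $C^{1,1}$ point (the exponential map of a $C^{1,1}$ metric is in general not $C^1$ through that region), so it is not even clear that $L_{\bg}\Psi_x$ is an $L^p$ function rather than a distribution with a singular part there; your proposal does not address this. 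The paper avoids the whole issue by a different test function: it glues $d_{\bg}(x,\cdot)^{-2}$ (used only for $d_{\bg}(x,\cdot)<t/8$) to the \emph{Euclidean} distance $|\bar y|^{-2}$ (used for $d_{\bg}(x,\cdot)>t/4$), which is smooth across $0$, and then uses the expansion \eqref{eq:metr-eucl-exp-lem-2} of $\bg$ in the translated Euclidean coordinates, valid at scales $\gtrsim t$, to get the $O(|\bar y|^{-2})$ bound in the outer region; the transition annulus at scale $t$ only costs $O(t^{-2})$, so the same $L^p$ bound follows without ever differentiating $d_{\bg}(x,\cdot)$ near the singular point.

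If you want to salvage your version, you would have to replace the normal-coordinate Taylor argument in the middle region by an almost-everywhere Laplacian comparison for $d_{\bg}(x,\cdot)$ under two-sided curvature bounds (plus an argument excluding a cut locus in $B_{\delta/2}$ and excluding a singular distributional part of $L_{\bg}(|y|^{-2})$), which is considerably more delicate than the one-line justification you give and is exactly the work the paper's Euclidean-gluing device is designed to bypass.
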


\begin{proof}
    We start by recalling that, by \eqref{eq:lift-metr-exp}, the standard Euclidean coordinates $\{z^i\}$ for $B_\delta(0)\subset \R^4$ centered at $0$ are also normal coordinates for $\tilde{g}$, so one has 
    \begin{equation}\label{eq:metr-eucl-exp-lem-1}
        \tilde{g}_{ij}(z)=\delta_{ij}+O^{''}(\abs{z}^2),
    \end{equation}
    for any $z\in B_\delta(0)$ (the distance function $d_{\tg}(\cdot,0)$ is smooth outside $0$ for $\delta>0$ small enough). Moreover, the same expansion also holds for $\bg_x$ for all $ x\in B_{\delta/4}(0)$ with uniformly bounded error terms (i.e. they do not depend on $x$). Let now $\bar{y}^i$ be \emph{normal coordinates for the Euclidean metric} $g_E$ centered at $x$. Clearly $\abs{\bar{y}}$ is smooth outside $x$; in particular, it is smooth at the origin $0\in B_\delta$. Since the $\{\bar{y}^i\}$'s are exactly a translation of the coordinates $\{z^i\}$ by a fixed vector of length $\simeq t$ (w.r.t. $\bg$), we see that the expansion \eqref{eq:metr-eucl-exp-lem-1} also holds for $\bg$ in $\bar{y}$-coordinates as soon as we are at range $\gtrsim t$. Hence
    \begin{equation}\label{eq:metr-eucl-exp-lem-2}
        \bar{g}_{ij}(\bar{y})=\delta_{ij}+O^{''}(\abs{\bar{y}}^2), \qquad \text{for $\abs{\bar{y}}\geq t/8$.}
    \end{equation}
    
    Let $\chi\colon\R^+\to[0,1]$ be a smooth cutoff function such that $\chi(s)=0$ for $s\leq t/8$, $\chi(s)=1$ for $s\geq t/4$ and $\abs{\chi^{(k)}(s)}\leq C t^{-k}$ for $k=1,2$. Define now the following \acc test function'' $\zeta\colon B_\delta(0)\to\R$ as
    \begin{equation*}
        \zeta(p):=d_{\bar{g}}(x,p)^{-2} \big(1-\chi(d_{\bg}(x,p))\big)+d_{g_E}(x,p)^{-2}\chi(d_{\bg}(x,p)).
    \end{equation*}
    By definition, $\zeta$ is $C^2$ outside the point $x$, so that $L_{\bg}(\zeta)$ will be continuous in $B_\delta\backslash\{x\}$.
    We next estimate $L_{\bg}(\zeta)(p)$ in different regions of $B_\delta(0)$.

\

    \textbullet \, Assume $d_{\bg}(x,p)< t/8$: then it follows by \eqref{eq:clapest-1} that
    \begin{align}\label{eq:lp-bound-1}
    \norm{L_{\bg} (\zeta)}_{L^p(B^{\bg}_{t/8}(x))}^p\leq C t^{4-2p}, \qquad \forall p>2.
\end{align}

\textbullet \, Assume $t/8< d_{\bg}(x,p)<t/4$: let us call, with a little abuse of notation, $\abs{y}=d_{\bg}(x,p)$ and $\abs{\bar{y}}=d_{g_E}(x,p)$. Then
\begin{align}
\notag
   \Delta_{\bg}(\zeta)(p)=&\Delta_{\bg}\big((1-\chi({\abs{y}}))\abs{y}^{-2}+\chi({\abs{y}})\abs{\bar{y}}^{-2}\big) \\[0.5ex]
   \notag
   =& - (\Delta_{\bg}\chi(\abs{y}))\abs{y}^{-2} +   (1-\chi({\abs{y}})) \Delta_{\bg}(\abs{y}^{-2})-2 \nabla_{\bg} \chi(\abs{y})\cdot \nabla_{\bg}(\abs{y}^{-2}) \\[0.5ex]
   \notag
   &+(\Delta_{\bg}\chi(\abs{y}))\abs{\bar{y}}^{-2}+\chi(\abs{y}) \Delta_{\bg}(\abs{\bar{y}}^{-2})+2\nabla_{\bg} \chi(\abs{y})\cdot \nabla_{\bg}(\abs{\bar{y}}^{-2}) \\[0.5ex]
   \notag
   =&\Delta_{\bg}\chi(\abs{y})\big[\abs{\bar{y}}^{-2}-\abs{y}^{-2}\big]+(1-\chi({\abs{y}})) \Delta_{\bg}(\abs{y}^{-2})+\chi(\abs{y}) \Delta_{\bg}(\abs{\bar{y}}^{-2}) \\[0.5ex]
   \label{eq:lap-conf-lem}
   &+2 \nabla_{\bg}\chi(\abs{y})\cdot \big[\nabla_{\bg}(\abs{\bar{y}}^{-2})-\nabla_{\bg} (\abs{y}^{-2})\big].
\end{align}
We already know that $\Delta_{\bg}(\abs{y}^{-2})=O(t^{-2})$, and clearly $\Delta_{\bg}\chi(\abs{y})=O(t^{-2})$ by definition of $\chi$. By virtue of \eqref{eq:metr-eucl-exp-lem-2}, in $\bar{y}$-coordinates one has $\sqrt{\abs{\bar{g}}}=1+O^{''}(\abs{\bar{y}}^2)$, so it follows from \eqref{eq:par-remainder} that $\Delta_{\bg}(\abs{\bar{y}}^{-2})=O(\abs{\bar{y}}^{-2})=O(t^{-2})$ in this region. Again by \eqref{eq:metr-eucl-exp-lem-2} (we are at scale $t$), one has $d_{\bg}(x,p)=d_{g_E}(x,p)+O^{''}(d_{g_E}(x,p)^3)$; as a consequence,
\begin{equation*}
    \abs{y}^{-2}=(\abs{\bar{y}}+O(t^3))^{-2}=\abs{\bar{y}}^{-2}+O(1),
\end{equation*}
and
\begin{equation*}
    \nabla_{\bg}(\abs{y}^{-2})=\nabla_{\bg}(\abs{\bar{y}}^{-2}+O^{'}(1))=\nabla_{\bg}(\abs{\bar{y}}^{-2})+O(t^{-1}).
\end{equation*}
Using all these relations inside \eqref{eq:lap-conf-lem}, we find that
\begin{equation*}
    \Delta_{\bg}(\zeta)(p)=O(t^{-2})
\end{equation*}
in the desired region, therefore
\begin{equation}\label{eq:lp-bound-2}
    \norm{L_{\bg} (\zeta)}_{L^p(B^{\bg}_{t/4}(x)\backslash B^{\bg}_{t/8}(x))}^p\leq C t^{4-2p}, \qquad \forall p>2.
\end{equation}

\textbullet \, Finally, again by \eqref{eq:metr-eucl-exp-lem-2}, we see that $L_{\bg}(\zeta)(p)=L_{\bg}(\abs{\bar{y}}^{-2})=O(\abs{\bar{y}}^{-2})$ in the complementary region $\{ p\in B_\delta(0)\mid d_{\bg}(x,p)>t/4\}$, which implies 
\begin{equation}\label{eq:lp-bound-3}
    \norm{L_{\bg} (\zeta)}_{L^p(B_\delta(0)\backslash B^{\bg}_{t/4}(x))}^p\leq C t^{4-2p}, \qquad \forall p>2.
\end{equation}

Putting together \eqref{eq:lp-bound-1}, \eqref{eq:lp-bound-2}, \eqref{eq:lp-bound-3}, we see that 
\begin{equation}\label{eq:lp-bound-fin}
\norm{L_{\bg} (\zeta)}_{L^p(B_\delta(0))}\leq C t^{\frac{4-2p}{p}}, \qquad \forall p>2.
\end{equation}
Let now $\phi_x$ be the solution to 
\begin{equation*}
    \begin{cases*}
        L_{\bg} (\phi_x) =-L_{\bg}(\zeta) & \text{in $B_\delta(0)$,} \\
        \phi_x=-\zeta & \text{in $\p B_\delta(0)$.}
    \end{cases*}
\end{equation*}
By construction, the Green's function for $L_{\bg}$ is given by $\bar{G}_x=\zeta+\phi_x$. 
Notice that $\norm{\zeta}_{W^{2,p}(\partial B_\delta)}\leq C=C(p,\delta)$, $\forall \, p>2$. By virtue of \eqref{eq:lp-bound-fin}, we can now use elliptic estimates (for $p=2+\mu$ and $p=4+\mu$ and $\mu$ small) together with the Sobolev embedding theorem in order to easily recover the expansion \eqref{eq:Green-lift-exp}. In particular, even if $L_{\bg}$ depends upon $x$, its coefficients are, by construction, uniformly bounded in $C^0$ and uniformly elliptic $\forall\, x\in B_{\delta/4}(0)\backslash\{0\}$; as a consequence, we have elliptic estimates with constants depending upon $p$, $\delta$, but not on $x$, see Theorem 9.13 and Lemma 9.17 in \cite{Gilbarg-Trudinger}. This concludes the proof. 
\end{proof}

\begin{remark}\label{rem:green-antip-exp}
    The above proof also shows that, if we still denote by $\{{y}^i\}$  normal coordinates for $\bg$ centered at $x$, then, letting $b>1$, one has
    \begin{equation*}
    \bar{G}_{-x}(y)=\frac{1}{4t^2}+\alpha_x(y), \qquad \forall \, \abs{y}<t^b,
    \end{equation*}
    where $\alpha_x$ is a $C^1$ function satisfying $\abs{\alpha_x(y)}\leq C t^{b-3}$ and $\abs{\nabla_{\bg}\alpha_x(y)}\leq C t^{-3}$, for a suitable $C=C(b)>0$ (independent of $x$) and for all $\abs{y}<t^b$.
\end{remark}

\subsection{Expansion of the Green's function near a conical point}\label{subsec:green-exp-manif}

We now employ the previous results in order to recover an expansion for the Green's function of a suitable conformal metric near a conical point $P\in M$. Let $q\in B^g_{\delta/4}(P)$ and call as before $\sigma_P^{-1}(q)=\{x,-x\}$, $\tilde{g}=\sigma_P^*g$ ($\sigma_P$ is defined in \eqref{eq:prj-def}), $t=d_g(q,P)=d_{\tilde{g}}(x,0)$. As explained in Section \ref{subseq-parametrix}, we can pass from $\tilde{g}$ to an equivariant (cf. Remark \ref{rem:equiv-metric}) conformal metric $\bar{g}=\bar{g}_x$ in $B_\delta(0)$, whose Green's function $\bar{G}_x$ satisfies the expansion \eqref{eq:Green-lift-exp} in normal coordinates for $\bar{g}$ around $x$. Since $\bar{g}=\tilde{g}$ in $B_\delta\backslash B_{\delta/2}$, we can push $\bar{g}$ down on $M$ via $\sigma_P$, defining a new metric $g^q$ which is conformal to $g$. In particular, one has
\begin{equation*}
    g^q=e^{f^q}g, \qquad f^q:=f_x\circ \sigma_P^{-1},
\end{equation*}
(note that $f^q$ is well-defined, being $f_x$  antipodally symmetric).
We can now employ \eqref{eq:green-relations}, Lemma \ref{lem:Green-exp-dir} and Remark \ref{rem:green-antip-exp} to recover the following expansion for the Green's function on $M$ associated to $L_{g^q}$:
\begin{lemma}\label{lem:green-exp-manifold}
    For any $q\in B^g_{\delta/4}(P)$, let $g^q$ be defined as above and consider $G_q$ the Green's function for $L_{g^q}$ with pole at $q$. Then, $\forall \, b>1$, the following expansion holds in normal coordinates $\{z^i\}$ for $g^q$ centered at $q$:
    \begin{equation}\label{eq:green-manif-expansion}
 G_q(z)=\frac{1}{\abs{z}^2}+A_q+\beta_q(z), \qquad \forall \, \abs{z}\leq t^b,
    \end{equation}
    where $A_q=1/(4t^2)+O(t^{b-3})$ and $\beta_q$ is a $C^1$ function satisfying
    \begin{equation*}
        \beta_q(0)=0, \quad\abs{\beta_q(z)}\leq C t^{b-3}, \quad \abs{\nabla_{g^q}\beta_q}\leq C t^{-3}, \qquad \forall \, \abs{z}\leq t^b,
    \end{equation*}
for a suitable $C=C(b)>0$ independent of $q$. In particular, the mass of the asymptotically-flat manifold $(M,G_q^2 \, g)$ diverges inverse-quadratically with respect to $d_g(q,P)$.
\end{lemma}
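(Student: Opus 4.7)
The plan is to apply the decomposition \eqref{eq:green-relations} verbatim, but with the lifted metric $\tilde{g}$ replaced by the locally renormalized conformal metric $\bar{g} = \bar{g}_x$ from \eqref{eq:gbar-definition}, and then to insert the asymptotics already established in Lemma \ref{lem:Green-exp-dir} and Remark \ref{rem:green-antip-exp}. Since $\bar{g}$ is antipodally symmetric (Remark \ref{rem:equiv-metric}) and agrees with $\tilde{g}$ outside $B_{\delta/2}(0)$, its pushforward $g^q$ to $M$ is well defined, and exactly the argument giving \eqref{eq:green-relations} yields
\[
G_q \circ \sigma_P \;=\; \bar{G}_x + \bar{G}_{-x} + H_x \qquad \text{on } B_\delta(0),
\]
where $H_x$ is $L_{\bar{g}}$-harmonic in $B_\delta(0)$ with boundary datum $G_q\circ\sigma_P|_{\partial B_\delta(0)}$, and $\bar{G}_{\pm x}$ are the Dirichlet Green's functions of $L_{\bar{g}}$ with poles at $\pm x$. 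Since $\sigma_P$ is a local isometry from $(B_{\delta/2}(0)\setminus\{0\},\bar{g})$ onto $(B_{\delta/2}(P)\setminus\{P\},g^q)$, the $\bar{g}$-normal coordinates $\{y^i\}$ centered at $x$ coincide with $g^q$-normal coordinates $\{z^i\}$ centered at $q$, so I will set $z=y$ throughout.

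Next I would simply plug in the two expansions: for $|z|\le t^b$ with $b>1$ and $t$ small (so that $t^b<t/2$), Lemma \ref{lem:Green-exp-dir} gives $\bar{G}_x(z)=|z|^{-2}+\phi_x(z)$ and Remark \ref{rem:green-antip-exp} gives $\bar{G}_{-x}(z)=\tfrac{1}{4t^2}+\alpha_x(z)$. Setting
\[
A_q := \tfrac{1}{4t^2}+\phi_x(0)+\alpha_x(0)+H_x(0), \qquad \beta_q(z):=[\phi_x(z)-\phi_x(0)]+[\alpha_x(z)-\alpha_x(0)]+[H_x(z)-H_x(0)],
\]
one immediately obtains the desired form \eqref{eq:green-manif-expansion} with $\beta_q(0)=0$. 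The pointwise and gradient estimates on $\beta_q$ then follow from a triangle inequality using the bounds already recorded: $|\phi_x(z)-\phi_x(0)|\le |z|\cdot Ct^{-1-\mu}\le Ct^{b-1-\mu}$ (with $\mu>0$ arbitrary), $|\alpha_x(z)-\alpha_x(0)|\le |z|\cdot Ct^{-3}\le Ct^{b-3}$, $|\nabla_{\bar{g}}\alpha_x|\le Ct^{-3}$, and the $\alpha_x$ contribution is the dominant one; note in particular $t^{b-1-\mu}\le t^{b-3}$ for $\mu\le 2$.

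The step that I expect to be the main obstacle is the uniform control of the harmonic remainder $H_x$ as $x$ ranges in $B_{\delta/4}(0)\setminus\{0\}$. By construction the coefficients of $L_{\bar{g}}$ are uniformly bounded and the operator is uniformly elliptic on $B_\delta(0)$, so interior Schauder/$W^{2,p}$ estimates give $\|H_x\|_{C^1(B_{\delta/2}(0))}\le C\|H_x\|_{L^\infty(B_\delta(0))}$ with a constant independent of $x$. Since $\lambda_1(L_g)>0$, the maximum principle yields $\|H_x\|_{L^\infty(B_\delta(0))}\le \|G_q\|_{L^\infty(\partial B_\delta(P))}$, and to close this one needs a uniform bound on $G_q$ along the \emph{fixed} annulus $\partial B_\delta(P)$ as $q$ varies in $B_{\delta/4}^g(P)$. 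This I would obtain by combining the uniform $L^r$-bounds from Proposition \ref{prop:green-existence} with standard interior elliptic estimates for $L_g$ on the smooth region $\{d_g(\cdot,P)\ge\delta/2\}\setminus\{q\}$ (away from the pole), exploiting that $L_g G_q=0$ there. This yields $|H_x(z)-H_x(0)|\le C|z|\le Ct^b$ and $|\nabla_{\bar{g}} H_x|\le C$, both absorbed by the $t^{b-3}$ and $t^{-3}$ error terms.

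Finally, collecting the three contributions to $A_q-\tfrac{1}{4t^2}$ one has $\alpha_x(0)=O(t^{b-3})$, $H_x(0)=O(1)$, and $\phi_x(0)=O(t^{-\mu})$ for any $\mu>0$; choosing $\mu$ small enough depending on $b$ (e.g.\ $\mu=3-b$ in the relevant regime $1<b<3$) absorbs all three in $O(t^{b-3})$, giving $A_q=\tfrac{1}{4t^2}+O(t^{b-3})$. The assertion on the mass of $(M,G_q^2 g)$ is then immediate from the identification of the mass with the constant term in the expansion of the Green's function in conformal normal coordinates at the pole, as in \cite{Lee-Parker-87}: since the leading constant is $A_q\sim \tfrac{1}{4\, d_g(q,P)^{2}}$, the mass diverges inverse-quadratically as $q\to P$.
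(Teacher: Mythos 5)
Your proof is correct and follows essentially the same route as the paper, whose proof consists precisely of combining the decomposition \eqref{eq:green-relations} (applied with $\bar{g}_x$ in place of $\tilde{g}$), Lemma \ref{lem:Green-exp-dir} and Remark \ref{rem:green-antip-exp}, together with the observation that $H_x$ is bounded in $C^1$ uniformly in $q\in B^g_{\delta/4}(P)$. Your additional paragraph justifying that uniform $C^1$ bound on $H_x$ (interior elliptic estimates plus a uniform bound for the boundary datum on the fixed annulus) merely fills in what the paper asserts without detail, so nothing essential differs.
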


\begin{proof}
    The proof follows immediatly from \eqref{eq:green-relations}, Lemma \ref{lem:Green-exp-dir} and Remark \ref{rem:green-antip-exp}, noticing that $H_x$ in \eqref{eq:green-relations} is uniformly bounded in $C^1(B^{g}_{\delta/2})$ by a constant which does not depend on $q\in B^g_{\delta/4}(P)$.
\end{proof}

\section{The variational argument}\label{sec:variational-argument}
This section will provide a detailed explanation of the variational argument at the base of the proof of Theorem \ref{t:main}.
As already mentioned in the introduction, \cite[Theorem 1.1]{freguglia-malchiodi-2024-pre} allows, in our setting, to find a Yamabe metric (minimizer) whenever  \eqref{s:xi-conf} is satisfied for at least one conical point $P$. More generally, we have existence of a Yamabe metric anytime $\Y(M,[g])<\Y_S$ (cf. \cite{ACM}) and, even if $\Y(M,[g])=\Y_S$, the minimum could sometimes be attained, as for the case of $\Sp^4/\Z_2$: a {\em football} with two antipodal conical points. As a consequence of this, from now on we will make 
the following: 

\begin{assumption}
\eqref{s:xi-conf} $\forall \,P$ does not hold  and $\Y(M,[g])=\Y_S=\frac{1}{\sqrt{2}}\Y_4$ is \underline{not attained}.
\end{assumption}

The fact that \eqref{s:xi-conf} does not hold implies that, by Corollary \ref{cor:c1extension}, after a conformal change \underline{$g$ admits a $\Z_2$-lift to a $C^{1,1}$ metric} in a neighborhood 
of $0 \in \R^4$ near each conical point. 

\smallskip 

In this particular situation, we want to show that it is possible to employ a \emph{mountain pass} scheme in order to find a solution for \eqref{eq:Y}. Let $P_1,P_2$ denote any two conical points of $M$, and consider, for a small $\delta>0$, a smooth cutoff function $\chi_\delta(r)$ such that:
\begin{equation}\label{eq:chi-delta}
    \begin{cases*}
        \chi_\delta(r)=1 & \text{for $r<\delta$,} \\
        \chi_\delta(r)=0 & \text{for $r\geq 2\delta$,} \\
        \abs{\nabla^{(k)}_g \chi_\delta(r)}\leq C\delta^{-k} & \text{$\forall r>0$ and $k=1,2.$}
    \end{cases*}
\end{equation}
Letting $s$ be the geodesic distance from $P_i$,  define a test function $\phi_{\epsilon,P_i}$ (highly concentrated at $P_i$) by
\begin{equation*} %\label{eq:mountain-pass-extremal-function}
    \phi_{\epsilon,P_i}(s):=
    \begin{cases*}
        U_\epsilon(s)\chi_\delta(s) & \text{for $s\leq 2\delta$}, \\
        0 & \text{anywhere else}.
    \end{cases*}
\end{equation*}
From this definition and the preceding discussion, it is clear that, for $i=1,2$, the quotient $Q_g(\phi_{\epsilon,P_i})$ approaches $\mathcal{Y}_S$ from above as $\epsilon\to0^+$.

Given a small and fixed $\epsilon\ll 1$, let
\begin{equation*}%\label{eq:mountain-pass-paths}
    \Pi:=\left\{ \gamma\in C([0,1];W^{1,2}(M)) \mid \gamma(0)=\phi_{\epsilon,P_1}, \gamma(1)=\phi_{\epsilon,P_2}\right\}
\end{equation*}
and define the min-max value $c$ as 
\begin{equation*}%\label{eq:min-max-value}
    c:=\inf_{\gamma\in \Pi}\max_{\tau\in [0,1]} Q_g(\gamma(\tau)).
\end{equation*}
Clearly both $\Pi$ and $c$ depend on $\epsilon$.
An adaptation of the well-known \emph{concentration-compactness} principle to our singular context allows us to prove the following (cf. \cite[Lemma 2.3]{Bianchi-96}):

\begin{lemma}\label{lemma:concentration-compactness}
   It holds $c> \max\{Q_g(\phi_{\epsilon,P_1}), Q_g(\phi_{\epsilon,P_2})\} >\mathcal{\Y}_4/{\sqrt{2}}$ provided $\epsilon$ is small enough.
\end{lemma}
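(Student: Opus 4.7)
The strategy naturally decomposes the claim into its two inequalities. The lower bound $m_\eps := \max\{Q_g(\phi_{\eps,P_1}), Q_g(\phi_{\eps,P_2})\} > \Y_4/\sqrt{2}$ follows immediately from the standing non-attainment hypothesis, since each $\phi_{\eps,P_i}$ is a nontrivial element of $W^{1,2}(M)$, hence $Q_g(\phi_{\eps,P_i}) > \Y(M,[g]) = \Y_S$ strictly for every admissible $\eps > 0$.

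For the main claim $c > m_\eps$, the plan is to argue by contradiction, using a continuous functional to separate the two endpoints of admissible paths. First, I would fix small disjoint neighborhoods $\Omega_1, \Omega_2$ of $P_1, P_2$ that contain no other conical points, and introduce the \emph{localization functional}
\[
\Psi(u) := \frac{\int_{\Omega_1} |u|^{2n/(n-2)} \, d\mu_g}{\int_M |u|^{2n/(n-2)} \, d\mu_g}, \qquad u \in W^{1,2}(M) \setminus \{0\}.
\]
Continuity of $\Psi$ follows from the embedding $W^{1,2}(M) \hookrightarrow L^{2n/(n-2)}(M)$, and for $\eps$ small enough one has $\Psi(\phi_{\eps,P_1}) > 3/4$ and $\Psi(\phi_{\eps,P_2}) < 1/4$. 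Assuming $c = m_\eps$, I would take a minimizing sequence of paths $\gamma_n \in \Pi$ with $\max_\tau Q_g(\gamma_n(\tau)) \leq m_\eps + 1/n$, and invoke the intermediate value theorem applied to $\tau \mapsto \Psi(\gamma_n(\tau))$ to extract $\tau_n \in [0,1]$ with $\Psi(\gamma_n(\tau_n)) = 1/2$. Setting $u_n := \gamma_n(\tau_n)$ normalized to $\|u_n\|_{L^{2n/(n-2)}} = 1$ produces a bounded sequence in $W^{1,2}(M)$ with $Q_g(u_n) \to m_\eps$ and $\Psi(u_n) = 1/2$.

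Next, I would apply the concentration-compactness principle of P.-L.\ Lions to the probability measures $|u_n|^{2n/(n-2)} \, d\mu_g$, in the form adapted to the singular setting of \cite{ACM}. The key numerical input is that in the $Y_i = \Sp^3/\Z_2$ setup each profile concentrating at a conical point $P_j$ contributes $\Y_S = \Y_4/\sqrt{2}$ to the limit Yamabe energy (by the Petean-type formula recalled in the introduction), while a bubble at a regular point contributes $\Y_4$; since $\sqrt{2}\,\Y_S = \Y_4$, any decomposition with two or more bubbles would yield limiting energy at least $\Y_4$. Because $m_\eps < \Y_4$ for $\eps$ small, only two scenarios remain: (i) $u_n$ concentrates as a single bubble at one conical point $P_j$; (ii) $u_n$ converges to a nonzero $u_* \in W^{1,2}(M)$ strongly in $L^{2n/(n-2)}$ without bubbling. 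Scenario (i) is immediately incompatible with $\Psi(u_n) = 1/2$, since the concentration forces $\Psi(u_n) \to 1$ when $j=1$ and $\Psi(u_n) \to 0$ otherwise.

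The hard part is to handle scenario (ii), because a priori the weak limit $u_*$ is only a function with $Q_g(u_*) \le m_\eps$ and $\Psi(u_*) = 1/2$, not automatically a critical point. To close this case, I would sharpen the extraction of $u_n$ via Ekeland's variational principle applied to the functional $\gamma \mapsto \max_\tau Q_g(\gamma(\tau))$ on $\Pi$, yielding a Palais-Smale sequence for $Q_g$ at level $m_\eps$ that still satisfies $\Psi = 1/2$. The Struwe-type decomposition of Palais-Smale sequences for the Yamabe functional then forces either additional bubbling (excluded by $m_\eps < \Y_4$) or strong $W^{1,2}$ convergence to a critical point $u_*$ of $Q_g$ at level $m_\eps$. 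In the latter case $u_*$ is an actual Yamabe metric, directly proving Theorem~\ref{t:main} and making the min-max scheme unnecessary; in either case the assumption $c = m_\eps$ is untenable, yielding $c > m_\eps$.
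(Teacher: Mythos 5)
Your lower bound $m_\eps > \Y_4/\sqrt{2}$ and the broad skeleton (argue by contradiction, select an ``intermediate'' point on near-optimal paths, apply concentration--compactness together with the identity $\sqrt{2}\,\Y_S=\Y_4$) are in the spirit of the paper's proof, but there is a genuine gap in how you close your scenario (ii), and it originates in working at the fixed level $m_\eps$, which lies strictly above the infimum $\Y_S$. First, the bubbling alternative you invoke for $u_n=\gamma_n(\tau_n)$ (``single bubble at a conical point or strong $L^{2^*}$ convergence'') is a Struwe-type quantization and requires a Palais--Smale sequence; your $u_n$ is only a bounded sequence with $Q_g(u_n)\le m_\eps+o(1)$, so at best Lions' lemma applies, which also permits a nontrivial weak limit carrying part of the mass plus atoms (this could be repaired by a splitting estimate, but it is not what you wrote). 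The proposed repair of scenario (ii) is the real problem: Ekeland, or the quantitative minimax principle, applied to $\gamma\mapsto\max_\tau Q_g(\gamma(\tau))$ on $\Pi$ yields a Palais--Smale sequence at level $c$ located near the \emph{near-maximal} points of almost-optimal paths, and nothing forces those points to satisfy $\Psi=1/2$; a Ghoussoub--Preiss localization on the separating set $\{\Psi=1/2\}$ would require the a priori bound $\inf_{\{\Psi=1/2\}}Q_g\ge c$, which is precisely what is unknown. Moreover, even granting such a constrained PS sequence, strong convergence to a critical point at level $m_\eps$ contradicts nothing: the standing Assumption only excludes \emph{minimizers}, so your argument ends with the disjunction ``either $c>m_\eps$ or a Yamabe metric exists'', which is weaker than the Lemma as stated.

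The missing idea, and the paper's actual mechanism, is to negate the statement along a sequence $\eps_k\searrow 0$. Since $Q_g(\phi_{\eps_k,P_i})\to\Y_S$, the contradiction hypothesis forces the maxima of near-optimal paths down to the \emph{unattained} infimum $\tfrac{\sqrt{2}}{2}\Y_4$; then every selected point $\gamma_k(t_k)$ is an almost-minimizer, Ekeland's principle (in its minimization form) produces a minimizing Palais--Smale sequence nearby, and non-attainment itself forces bubbling. The case analysis is then on the behavior of the mass along the whole path: if it stays concentrated for all times, by continuity one can choose $t_k$ so that the concentration point is far from the conical set, producing a regular bubble and hence limiting energy at least $\Y_4$; if at some time the mass is spread, at least two bubbles form and the quantization again gives at least $\Y_4$ --- in both cases contradicting the level $\tfrac{\sqrt{2}}{2}\Y_4$. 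This use of $\eps\to 0$ is exactly what eliminates your scenario (ii): at the infimum level ``strong convergence'' is excluded by the non-attainment assumption, whereas at your fixed level $m_\eps$ it cannot be.
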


\begin{proof} 
    We assume $\eps = \eps_k \searrow 0$:  
first fix  small constants $\hat{\eps}, \delta > 0$, and we will take then $\eps_k \ll 
    \hat{\eps} \ll \delta$. 
Consider now an admissible curve $\gamma_k \in \Pi$. Since the Yamabe energy is invariant by dilation, by properly scaling the functions 
$\phi_{\epsilon,P_1}, \phi_{\epsilon,P_2}$, 
we can assume without loss of generality that $\|\gamma_k(t)\|_{L^4(M)} = 1$ 
for all $t \in [0,1]$. Fixing a small numer $r > 0$, we distinguish  two cases. 

\

\noindent {\bf Case 1.} {\em For all $t \in [0,1]$ there exists $q_t \in M$ such that $\int_{B_{r}(q_t)} |\gamma_k(t)|^{4} d\mu_g \geq 1 - \hat{\eps}$}. 

Notice that $q_t$ is not unique, but all such $q_t$ must be contained 
in a ball of size $4 r$. Since $\gamma_k$ is a continuous curve in $W^{1,2}$, and hence in 
$L^4$, there must be a value $t_k$ of $t$ such that  
$B_{\frac{1}{8} d(P_1, P_2)} (q_{t_k}) $ consists of regular points.

Assuming that the statement is false, 
letting $\eps_k \searrow 0$ we would have that $\gamma_k(t_k)$ is a minimizing sequence 
for $Q_{{g}}$, so $Q_{{g}}(\gamma_k(t_k)) \to \frac{\sqrt{2}}{2} \Y_4$. By Ekeland's variational principle, see e.g. 
\cite{St-book}, there exists a minimizing  Palais-Smale sequence $\tilde{\gamma}_k$ such that $\|\tilde{\gamma}_k - \gamma_k(t_k)
\|_{W^{1,2}(M)} \to 0$, so also $Q_{{g}}(\tilde{\gamma}_k) \to \frac{\sqrt{2}}{2} \Y_4$.
Since we are assuming the infimum of $Q_{{g}}$, equal to 
$\frac{\sqrt{2}}{2} \Y_4$, is not attained, by the result in \cite{St-84}, which can be rather easily adapted to the 
present situation, $\tilde{\gamma}_k$ must develop 
a finite number of bubbles. Since also $q_{t_k}$ is 
not approaching any singular point and since the local 
Yamabe constant of smooth points coincides with $\Y_4$, 
we must have that 
\[
\lim_k Q_{{g}}(\tilde{\gamma}_k) \geq \Y_4, 
\]
against the contradiction assumption.

\

\noindent {\bf Case 2.} {\em There exists $t_k \in [0,1]$ such that for all $q$  $\int_{B_{r}(q)} |\gamma_k(t_k)|^{2^*} d\mu_g < 1 - \hat{\eps}$}. 

We let again $\eps_k \searrow 0$, and still assume by contradiction that the statement is false. Then we can find a minimizing 
Palais-Smale sequence $\tilde{\gamma}_k$ as before 
for $Q_{{g}}$ such that $Q_{{g}}(\tilde{\gamma}_k) \to 
\frac{\sqrt{2}}{2} \Y_4$. 

Since we are in Case 2 (and we are assuming that the Yamabe quotient is not attained), the sequence $\tilde{\gamma}_k$  must 
develop at least two bubbles. More precisely $\tilde{\gamma}_k$ can be 
written as 
\begin{equation}\label{eq:PS}
    \tilde{\gamma}_k = \sum_{i=1}^{j_1} u_{p_{i,k},\eps_{i,k}} + \sum_{l=1}^{j_2} v_{p_{l,k},\eps_{l,k}} + o_k(1),
\end{equation}
where the sequences $(p_{i,k})_k$ are converging 
to some singular points, $(p_{l,k})_k$ to 
some points in $M$ (regular or singular), $\eps_{i,k}, \eps_{l,k} \to 0$ and where 
the $u_{i,k}$'s and the $v_{l,k}$'s have profiles of singular bubbles and regular bubbles respectively. Since for both of these we have that 
\[
\lim_k Q_{{g}}(u_{p_{i,k},\eps_{i,k}}) \geq 
\frac{\sqrt{2}}{2} \Y_4; \qquad 
\lim_k Q_{{g}}(v_{p_{l,k},\eps_{i,k}}) \geq 
\frac{\sqrt{2}}{2} \Y_4. 
\]
by the fact that $j_1 + j_2 \geq 2$ and standard concavity arguments, see for example Chapter III in \cite{St-book}, we have again 
\[
\lim_k Q_{{g}}(\tilde{\gamma}_k) \geq \Y_4, 
\]
which is still a contradiction. This concludes the proof. 
\end{proof}

\

As an immediate consequence of Lemma \ref{lemma:concentration-compactness} and a 
standard deformation argument,  we can find a Palais-Smale sequence $(u_n)_n$ for $Q_g$ at level $c$.  As we will see at the end of this section, proving $c<\Y_4$ would rule out all possible blow-up scenarios for such a Palais-Smale sequence. 
In order to show that $c<\Y_4$, we will exhibit a competitor $\bar{\gamma}\in\Pi$ such that $\max_{\tau\in[0,1]} Q_g(\bar{\gamma}(\tau))<\Y_4$; this is the most delicate and technically challenging part of the proof of Theorem \ref{t:main}. 

\subsection*{Construction of the competitor}

\noindent
The competitor $\bar{\gamma}$ we are going to exhibit will be, roughly speaking, a continuous path of highly concentrated bubbles with centers along a suitable curve $\hat{\gamma}$ supported on $M$ and connecting the two conical points $P_1$ and $P_2$. In order to keep the value of the Yamabe quotient of each bubble below $\Y_4$, we must pay attention to the error terms of its expansion.

There are two different regimes, each requiring different competitors:
\begin{itemize}
    \item when we are close to the conical points $P_1$ and $P_2$, a good competitor is represented by the image of a ``double bubble'', that is, the image on the manifold of the projection (w.r.t. ${\sigma}_{P_i}$ defined as in \eqref{eq:prj-def}) of a sum of two antipodal bubbles which are centered at points $\pm t\nu$ respectively, for some $\nu\in \Sp^3$ and with scaling parameter $\epsilon$;
    \item when we are \acc far'' from the conical points, a good competitor is represented by a bubble suitably glued to a multiple of the Green's function for the conformal Laplacian, as  done by Schoen for the resolution of Yamabe problem in low dimension and on LCF manifolds, cf. \cite{Schoen-84}, \cite{Lee-Parker-87}.
\end{itemize}
However, in order to pass from one type of competitor to the other in a continuous way, we also need to show that there exists an \emph{intermediate regime} in which it is possible to interpolate in between while keeping the Yamabe quotient below $\Y_4$.

As we will show, this is indeed possible by virtue of the fact that, near the conical points, the error terms in the expansion of the Yamabe quotient are of the \emph{same order} (and uniformly negative) for the two different competitors.

\subsection{Competitors near the conical points}
To begin, we define suitable test functions near the conical points.

On a small ball $B_{2\delta}(0)\subseteq\R^4$, we define $\tilde{u}_{\epsilon,t\nu}(y):=\widehat{U}_{\eps,t\nu}(y)\chi_\delta(\abs{y})$, where $\widehat{U}_{\eps,t\nu}$ is the sum of two bubbles defined in \eqref{eq:double-bubble} and $\chi_\delta$ is the cutoff function defined in \eqref{eq:chi-delta}.

\medskip
 
\noindent \textbf{Notation.}
    Since all the computations involving integrals of $\widehat{U}_{\eps,t\nu}$ over balls are invariant with respect to $\nu\in\Sp^3$, for the sake of simplicity, we will often assume $\nu=\mathbf{e}_1$ and write $\tilde{u}_{\epsilon,t}$ in place of $\tilde{u}_{\epsilon,t\nu}$.

\medskip 

Given a conical point $P$ and ${\sigma}_P:B^{\tilde{g}}_{2\delta}(0)\to  B^g_{2\delta}(P)$ the projection defined in \eqref{eq:prj-def} (recall that $\tilde{g}$ is the lifted metric), we define a test function $u_{\epsilon,t}$ on $M$ by 
    \begin{equation}\label{eq:doubl-bub-test-M}
    u_{\epsilon,t}^P(q)=u_{\epsilon,t}(q):=
        \begin{cases*}
        \tilde{u}_{\epsilon,t}({\sigma}_P^{-1}(q)) & \text{if $d_g(P,q)<2\delta,$} \\
        0 & \text{if $d_g(P,q)\geq 2\delta$,}
        \end{cases*}
    \end{equation}
    where it is understood that, by $\tilde{u}_{\epsilon,t}({\sigma}_P^{-1}(q))$, we mean the value of $\tilde{u}_{\epsilon,t}$ in  any  of the two points in the pre-image of $q$. This function is \emph{well-defined} since $\tilde{u}_{\epsilon,t}$ is antipodally symmetric by construction. 

    \begin{remark}
        When $t=0$, one obtains $u_{\epsilon,0}=2\phi_{\epsilon,P}$, therefore $Q_g(u_{\epsilon,0})=Q_g(\phi_{\epsilon,P})$; moreover, the map $t\to Q_g(u_{\epsilon,t})$ is continuous from $[0, \delta/2)$ to $W^{1,2}(M)$ for any fixed $\epsilon\ll1$. 
    \end{remark}

    Regarding the expansion of Yamabe's quotient at $u_{\epsilon,t}$, we have the following result, whose proof will be given in Section \ref{sec:doub-bubble}:
    \begin{proposition}\label{prop:exp.doub-bub}
        Let $u_{\epsilon,t}$ be defined as in \eqref{eq:doubl-bub-test-M}. For any $\alpha\in\big(1/2,1\big)$ and $t=\epsilon^\alpha$, the following expansion holds:
        \begin{equation*}%\label{eq:dbub-y-exp}
            Q_g(u_{\epsilon,t})=6\mathcal{S}_4-A\epsilon^{2(1-\alpha)}+o\big(\epsilon^{2(1-\alpha)}\big), \qquad \text{as $\epsilon\to0$},
        \end{equation*}
        where $A>0$ is a positive constant (explicitly given by \eqref{eq:A-formula}).
    \end{proposition}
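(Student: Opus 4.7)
\medskip

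\noindent\textbf{Proof plan.}
The strategy is to push the computation back to the Euclidean double cover, where the explicit asymptotics \eqref{eq:F3-estimate}--\eqref{eq:F3-estimate-3} are already available, and then control all the perturbations introduced by the manifold metric.

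First I would observe that, since $\sigma_P$ realizes a local two-to-one cover and $\tilde u_{\epsilon,t}$ is antipodally symmetric by construction, every integral on $M$ equals one half of the corresponding integral on $B_{2\delta}^{\tilde g}(0)\subset\R^4$. Combining the factor $1/2$ in the Yamabe numerator with the factor $(1/2)^{1/2}$ in the denominator, one gets
\[
Q_g(u_{\epsilon,t})=\frac{1}{\sqrt{2}}\,Q_{\tilde g}(\tilde u_{\epsilon,t}),
\]
so it is enough to expand $Q_{\tilde g}(\tilde u_{\epsilon,t})$ up to order $\epsilon^{2(1-\alpha)}$.

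Next, I would quantify the price of replacing $\tilde g$ with the Euclidean metric and $\tilde u_{\epsilon,t}$ with $\widehat U_{\epsilon,t\nu}$. By Corollary~\ref{cor:c1extension} and \eqref{eq:lift-metr-exp}, in normal coordinates for $\tilde g$ one has $\tilde g^{ij}=\delta^{ij}+O''(|x|^2)$, $d\mu_{\tilde g}=(1+O''(|x|^2))\,dx$ and $R_{\tilde g}\in L^\infty$. Since $t=\epsilon^\alpha\ll\delta$ each bubble center $\pm t\nu$ is well inside the region $\{\chi_\delta\equiv 1\}$, and the tail created by the cutoff is $O((\epsilon/\delta)^k)$ for every $k\ge 1$. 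A change of variables $y=\epsilon^{-1}(x\mp t\nu)$ localized around each center, using $|x|^2=|y\epsilon\pm t\nu|^2$ and radial symmetry of $U$, yields
\[
\int_{B_{2\delta}} |x|^2\,|\nabla \widehat U_{\epsilon,t\nu}|^2\,dx=O(t^2)+O(\epsilon^2)=O(\epsilon^{2\alpha}),
\]
\[
\int_{B_{2\delta}} |x|^2\,\widehat U_{\epsilon,t\nu}^4\,dx=O(t^2)+O(\epsilon^2)=O(\epsilon^{2\alpha}),
\]
\[
\int_{B_{2\delta}} |R_{\tilde g}|\,\tilde u_{\epsilon,t}^2\,d\mu_{\tilde g}=O(\epsilon^2|\log\epsilon|).
\]
Since $\alpha\in(1/2,1)$, all three quantities are $o(\epsilon^{2(1-\alpha)})$: the lower bound $\alpha>1/2$ is exactly what makes the quadratic metric error beat the interaction scale $(\epsilon/t)^2$.

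Third, I would plug in the Euclidean leading terms. Expanding $(U_{\epsilon,t\nu}+U_{\epsilon,-t\nu})^4$ and using $\|U\|_{L^4(\R^4)}=1$ together with \eqref{eq:F3-estimate-2}--\eqref{eq:F3-estimate-3} gives
\[
\int_{\R^4} \widehat U_{\epsilon,t\nu}^4\,dx=2+8B\,\mathcal{S}_4^{-1}(\epsilon/t)^2+o\big((\epsilon/t)^2\big),
\]
while expanding $|\nabla(U_{\epsilon,t\nu}+U_{\epsilon,-t\nu})|^2$ and using $\int|\nabla U|^2=\mathcal{S}_4$ with \eqref{eq:F3-estimate} gives
\[
\int_{\R^4}|\nabla \widehat U_{\epsilon,t\nu}|^2\,dx=2\mathcal{S}_4+2B(\epsilon/t)^2+o\big((\epsilon/t)^2\big).
\]
Substituting $t=\epsilon^\alpha$, so that $(\epsilon/t)^2=\epsilon^{2(1-\alpha)}$, and Taylor-expanding $(1+x)^{-1/2}=1-x/2+o(x)$ in the denominator, one obtains
\[
Q_{\tilde g}(\tilde u_{\epsilon,t})=6\sqrt{2}\,\mathcal{S}_4-6\sqrt{2}\,B\,\epsilon^{2(1-\alpha)}+o\big(\epsilon^{2(1-\alpha)}\big),
\]
so that $Q_g(u_{\epsilon,t})=6\mathcal{S}_4-A\epsilon^{2(1-\alpha)}+o(\epsilon^{2(1-\alpha)})$ with $A=6B>0$.

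The main technical point is the error bookkeeping of the second step: one must verify uniformly that none of the corrections (metric perturbation, scalar-curvature term, cutoff tail, or the higher-order interaction coming from \eqref{eq:F3-estimate-3}) competes with the gain $6B(\epsilon/t)^2$. This forces the restriction $\alpha\in(1/2,1)$, where the lower bound is dictated by the quadratic metric error and the upper bound by the requirement that the two bubbles remain well separated on their own scale, so that \eqref{eq:F3-estimate}--\eqref{eq:F3-estimate-3} apply.
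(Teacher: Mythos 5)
Your argument is correct and follows essentially the same route as the paper: reduce to the $\Z_2$-lift via $Q_g(u_{\epsilon,t})=\tfrac{1}{\sqrt{2}}Q_{\tilde g}(\tilde u_{\epsilon,t})$, absorb the metric, scalar-curvature and cutoff errors into $O(\epsilon^{2\alpha})+O(\epsilon^{2}\log\epsilon)=o\big(\epsilon^{2(1-\alpha)}\big)$ thanks to $\alpha>1/2$, and read off the leading correction from the bubble interaction at scale $(\epsilon/t)^2$. The only (minor) difference is that you quote Bahri's estimate \eqref{eq:F3-estimate} with an unspecified constant $B$, so you get $A=6B$, whereas the paper recomputes the interaction explicitly via integration by parts and the boundary integral \eqref{eq:buest2}, giving $B=\pi^2 c_4^2$ and hence the explicit value $A=6\pi^2 c_4^2$ of \eqref{eq:A-formula} claimed in the statement; with that identification (which the paper itself points out when comparing \eqref{eq:F3-estimate} with \eqref{eq:gradprod-integral}) the two proofs coincide.
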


    \begin{remark}
        As it will be clear from the proof, the particular choice of $t$ as function of $\epsilon$ is necessary and it is motivated by the fact that the higher order terms in the metric expansion \eqref{eq:lift-metr-exp} generate errors which become too big for larger $t$.
    \end{remark}

\subsection{Competitors far from the conical points}
When we are farther from conical points, the problem behaves similarly to the ``regular'' Yamabe problem, therefore we can follow the same strategy outlined in \cite{Schoen-84}, \cite{Lee-Parker-87}. 

By the arguments in Sections \ref{subseq-parametrix} and \ref{subsec:green-exp-manif}, we know that, for any point $q$ of $M$ at distance $t\leq\delta/4$ from a conical point $P$, there exists a conformal metric $g^q$ (which agrees with $g$ outside a small ball centered at $q$) such that, if $G_q$ denotes the Green's function of the conformal Laplacian $L_{g^q}$ centered at $q$ (and normalized in such a way that $L_{g^q} G_q=4\pi^2 a\delta_q$, see again Section \ref{sec:greenfunct} for the existence of $G_q$), then, in normal coordinates for $g^q$ around $q$ and $\forall \, b>1$, one has the following expansion: 
\begin{align*}
    G_q(z)=\frac{1}{\abs{z}^2}+A_q+\beta_q(z), \qquad \forall \, \abs{z}\leq t^b,
\end{align*}
where $t=d_g(q,P)$ and $\beta_q$ is a suitable $C^1$ error term, see Lemma \ref{lem:green-exp-manifold} for the details. Let us call $\tau=t^\gamma$ for some $\gamma>b$ (to be specified soon), and let $\chi_\tau$ be a smooth cutoff defined by formula \eqref{eq:chi-delta} with $\tau$ in place of $\delta$. We define a test function $w_{q,\epsilon,\tau}$ as follows:
\begin{equation}\label{eq:sch-bub-test-M}
    \begin{cases*}
       w_{q,\epsilon,\tau}(z)= U_\epsilon(z) & \text{for $\abs{z}\leq\tau$}, \\
       w_{q,\epsilon,\tau}(z)= \frac{1}{\nu}\big( G_q(z)-\chi_{\tau}(z)\beta_q(z)\big) & \text{for $\tau<\abs{z}\leq 2\tau$}, \\
       w_{q,\epsilon,\tau}= \frac{1}{\nu}G_q & \text{in $M\backslash B^{g^q}_{2\tau}(q)$},
    \end{cases*}
\end{equation}
where $\nu\in\R$ is chosen in such a way to make the match at $\abs{z}=\tau$ continuous, that is, we ask 
\begin{equation}\label{eq:cont-match}
    \frac{c_4\epsilon^{-1}}{1+\epsilon^{-2}\tau^2}=\frac{1}{\nu}\Big(\frac{1}{\tau^2}+A_q\Big).
\end{equation}

Regarding the expansion of $Q_{g^q}$ at $w_{q,\epsilon,\tau}$  we have the following result:
\begin{proposition}\label{prop:exp-sch-bub-collapse}
    For $t:=d_{g}(P,q)$ and $\tau=\tau(t):=t^{\omega/\alpha}$, let $w_{q,\epsilon}$ be defined as in \eqref{eq:sch-bub-test-M}. Assume that $\omega$ and $\alpha$ satisfy
    \begin{equation}\label{eq:alphbar-assumpt}
        1>\omega>\alpha>1/2 \quad \text{and} \quad 2+2\alpha-4\omega>0.
    \end{equation}
    Then there exists $0<\bar{\epsilon}\ll\bar{\delta}$ such that, if $\delta\leq \bar{\delta}$, $\epsilon\leq\bar{\epsilon}$, one has
    \begin{equation*}
        Q_{g^q}(w_{q,\epsilon})<6\mathcal{S}_4, \qquad  \forall t\in [\epsilon^\alpha,\delta/4].
    \end{equation*}
    Moreover, the following expansion holds when $t=\epsilon^{\alpha}$:
    \begin{equation*}%\label{eq:schbub-y-exp}
        Q_{g^q}(w_{q,\epsilon})=6\mathcal{S}_4-A\epsilon^{2(1-\alpha)}+o\big(\epsilon^{2(1-\alpha)}\big), \qquad \text{as $\epsilon\to0$},
    \end{equation*}
    where $A>0$ is given by \eqref{eq:A-formula}.
\end{proposition}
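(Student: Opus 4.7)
The plan follows the gluing scheme of Schoen \cite{Schoen-84} (see also \cite{Lee-Parker-87}), but with the standard mass replaced by the quantity $A_q$ produced by Lemma~\ref{lem:green-exp-manifold}. The crucial feature is that, in our regime, $A_q = (4t^2)^{-1} + O(t^{b-3})$ diverges as $q \to P$, so that the resulting negative correction to $Q_{g^q}$ is large and precisely matches the correction already obtained in the double-bubble regime (Proposition~\ref{prop:exp.doub-bub}), making the interpolation between the two types of competitors possible.

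I would decompose $M$ into $\Omega_{\mathrm{in}} := B^{g^q}_\tau(q)$, $\Omega_{\mathrm{an}} := B^{g^q}_{2\tau}(q) \setminus B^{g^q}_\tau(q)$, $\Omega_{\mathrm{out}} := M \setminus B^{g^q}_{2\tau}(q)$, and split the numerator and denominator of $Q_{g^q}(w_{q,\epsilon})$ accordingly. In $\Omega_{\mathrm{in}}$ one has $w_{q,\epsilon} = U_\epsilon$ in normal coordinates for $g^q$; thanks to the conformal normal coordinate expansions \eqref{eq:sc-curv-exp}--\eqref{eq:gbar-metr-expansion}, the scalar curvature term and the Weyl contribution give errors of order $\epsilon^4 t^{-2} |\log \epsilon|$ and $\epsilon^4 |\log \epsilon|$ respectively, and an explicit computation using $\tau \gg \epsilon$ gives
\[
\tfrac{1}{a}\!\int_{B_\tau} a|\nabla U_\epsilon|^2 \, dy = \mathcal{S}_4 - C_1(\epsilon/\tau)^2 + O((\epsilon/\tau)^4), \qquad \int_{B_\tau} U_\epsilon^4 \, dy = 1 - C_2 (\epsilon/\tau)^4 + \ldots.
\]
In $\Omega_{\mathrm{out}}$, where $w_{q,\epsilon} = G_q/\nu$, integration by parts using $L_{g^q} G_q = 4\pi^2 a\,\delta_q$ reduces the energy to a boundary term on $\partial B^{g^q}_{2\tau}(q)$; inserting the expansion~\eqref{eq:green-manif-expansion} (valid since $2\tau = 2t^{\omega/\alpha} \le t^b$ for $b>1$ chosen with $\omega/\alpha > b$) yields the mass contribution $\sim A_q / \nu^2 \sim A_q \epsilon^2$. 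The annular contribution is bounded directly from the estimates $|\beta_q| \le Ct^{b-3}$ and $|\nabla_{g^q} \beta_q| \le Ct^{-3}$ of Lemma~\ref{lem:green-exp-manifold} together with the volume $\sim \tau^4$ of $\Omega_{\mathrm{an}}$.

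Solving~\eqref{eq:cont-match} gives $\nu = c_4^{-1}\epsilon^{-1}\bigl(1 + A_q \tau^2 + O((\epsilon/\tau)^2)\bigr)$ when $\tau \gg \epsilon$, and combining the three regional contributions, the Yamabe quotient expands as
\[
Q_{g^q}(w_{q,\epsilon}) = 6\mathcal{S}_4 - C\, A_q\, \epsilon^2 + (\text{subdominant errors})
\]
with $C>0$ explicit. At $t = \epsilon^\alpha$ this becomes $6\mathcal{S}_4 - A\epsilon^{2(1-\alpha)} + o(\epsilon^{2(1-\alpha)})$, with $A$ as in~\eqref{eq:A-formula}. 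The strict inequality $Q_{g^q}(w_{q,\epsilon}) < 6\mathcal{S}_4$ throughout $t \in [\epsilon^\alpha, \delta/4]$ follows from positivity of $A_q$: Lemma~\ref{lem:green-exp-manifold} gives $A_q \sim 1/(4t^2) > 0$ for small $t$, while for $t$ comparable to $\delta/4$ the coefficient $A_q$ agrees (up to the harmonic correction $H_x$) with the genuine mass at a regular point, positive by the conical positive mass theorem of~\cite{Dai-Sun-Wang-24-preprint}.

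\textbf{Main obstacle.} The delicate point is verifying that the restrictions~\eqref{eq:alphbar-assumpt} are exactly what is required to make all error terms $o(\epsilon^{2(1-\alpha)})$. The inequality $\omega > \alpha$ ensures that the quadratic inner correction $(\epsilon/\tau)^2 = \epsilon^{2(1-\omega)}$, as well as the annular contributions $\epsilon^2/\tau^2$ and $\epsilon^{2+4\omega-6\alpha}$ from $\nabla \beta_q$, are dominated by the mass term $\epsilon^{2(1-\alpha)}$. The second inequality $2 + 2\alpha - 4\omega > 0$ is precisely the condition that the quartic inner correction $(\epsilon/\tau)^4 = \epsilon^{4(1-\omega)}$ is dominated by $\epsilon^{2(1-\alpha)}$. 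The freedom to pick $b>1$ arbitrarily large in Lemma~\ref{lem:green-exp-manifold} then absorbs the contributions containing $\beta_q$ in the annulus without further constraints. Book-keeping the interaction between the cutoff $\chi_\tau \beta_q$ and $G_q$ (in particular the cross-terms involving $\chi_\tau'$) is the most technical part of the proof.
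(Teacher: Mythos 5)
Your overall strategy is the same as the paper's (Schoen-type gluing, with the mass replaced by the diverging quantity $A_q\sim 1/(4t^2)$ from Lemma~\ref{lem:green-exp-manifold}, matched at $|z|=\tau$ via \eqref{eq:cont-match}), but there is a genuine gap in your order accounting, and it sits exactly at the heart of the computation. You claim that the condition $\omega>\alpha$ makes the quadratic inner correction $(\epsilon/\tau)^2=\epsilon^{2(1-\omega)}$ (and the annular terms of order $\epsilon^2/\tau^2$) subdominant with respect to the mass term $\epsilon^{2(1-\alpha)}$. This is backwards: since $\omega>\alpha$, one has $2(1-\omega)<2(1-\alpha)$, so $\epsilon^{2(1-\omega)}$ is \emph{larger} than $\epsilon^{2(1-\alpha)}$ as $\epsilon\to 0$. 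These terms cannot be absorbed as errors; they must \emph{cancel}. Concretely, the inner boundary term $\int_{\partial B_\tau}(\partial_\nu U_\epsilon)U_\epsilon\,d\sigma\approx -4\pi^2c_4^2\,\epsilon^2/\tau^2$ is compensated exactly by the contribution of the singular part $|z|^{-2}$ of $G_q$ in the outer boundary term $-\nu^{-2}\int_{\partial B_\tau}(\partial_\nu G_q)G_q\,d\sigma\approx +4\pi^2c_4^2\,\epsilon^2/\tau^2+\dots$, the matching being guaranteed precisely by \eqref{eq:cont-match} and the expansion of $1/\nu$; only after this cancellation does the surviving contribution of order $\epsilon^2/t^2$ (coming from $A_q$ and the $1/(2\tau^3t^2)$ term in $(\partial_\nu G_q)G_q$) appear with the coefficient $A$ of \eqref{eq:A-formula}. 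If you treat the $\epsilon^{2(1-\omega)}$ terms as dominated errors, as written, the claimed expansion $6\mathcal{S}_4-A\epsilon^{2(1-\alpha)}+o(\epsilon^{2(1-\alpha)})$ does not follow, and the coefficient match with the double-bubble regime (essential for the interpolation in Proposition~\ref{prop:interp-expansion}) is lost. The correct role of $\omega>\alpha$ is different: it gives $\tau\ll t$, so that $B_{2\tau}$ lies inside the region $|z|\le t^b$ where the Green's function expansion holds, and so that the subleading corrections $\tau^2A_q\sim\tau^2/t^2=\epsilon^{2(\omega-\alpha)}$ in the expansion of $1/\nu$ produce only $o(\epsilon^2/t^2)$ after the cancellation. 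Your reading of the second condition $2+2\alpha-4\omega>0$ (controlling $\epsilon^4/\tau^4$) is correct.

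A smaller point: within this proposition ($t\le\delta/4$) the positive mass theorem of \cite{Dai-Sun-Wang-24-preprint} is not needed. Since $A_q=1/(4t^2)+O(t^{b-3})$ with $b>1$ and constants uniform in $q$, the term $1/(4t^2)$ dominates on the whole range $t\in[\epsilon^\alpha,\delta/4]$ once $\delta$ is small, which is how the paper obtains the uniform strict inequality; the PMT enters only in the regime $t\ge\delta/4$ handled by \eqref{eq:green-test-standard-expansion}.
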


The proof of Proposition \ref{prop:exp-sch-bub-collapse} will be given in Section \ref{sec:greenbubl-expansion}. As explained there,  assumption \eqref{eq:alphbar-assumpt} is necessary in order to control some error terms.

\medskip

More generally, when $q$ is at distance $\geq \delta/4$ from all conical points of $(M,g)$, we can still define a test function as in \eqref{eq:sch-bub-test-M} with $\tau=(\delta/4)^{\omega/\alpha}$. In this case, the expansion of Yamabe quotient is the same as the one obtained on a smooth manifold in \cite{Schoen-84}, \cite{Lee-Parker-87}, and is given by
\begin{equation}\label{eq:green-test-standard-expansion}
    Q_{g^q}(w_{q,\epsilon})=6\mathcal{S}_4- A_q \epsilon^2+\epsilon^2 o_\delta(1) +o(\epsilon^2),
\end{equation}
where $A_q$ is a positive multiple of the ADM mass of the associated scalar-flat, asymptotically flat (AF) orbifold $(M\backslash \{q\},G_q^2g)$. In this setting, we can use the recent positive mass theorem of Dai-Sun-Wang \cite[Theorem 1.1]{Dai-Sun-Wang-24-preprint} for AF manifolds with conical singularities in order to deduce that $Q_{g^q}(w_{q,\epsilon})<6\mathcal{S}_4$ for any such choice of $q$.

\subsection{Interpolating in the middle}

In Propositions \ref{prop:exp.doub-bub} and \ref{prop:exp-sch-bub-collapse}, we obtained an expansion for the Yamabe quotient of the test functions $u_{\epsilon,t}$ and $w_{q,\epsilon}$ which are \acc centered'' at distance $t=\epsilon^\alpha$ from a conical point 
 $P\in M$. Assuming $q={\sigma}_P(t\nu)$ (so that both test functions are also centered at the same point), we now  want to interpolate between $u_{\epsilon,t}$ and $w_{q,\epsilon}$ while keeping the value of the Yamabe quotient strictly below the critical level $\Y_4=6\mathcal{S}_4$.
 This is ensured by the following result:
\begin{proposition}\label{prop:interp-expansion}
    Let $u_{\epsilon,t}$ and $w_{q,\epsilon}$ be given as in \eqref{eq:doubl-bub-test-M} and \eqref{eq:sch-bub-test-M} respectively, and assume that $q={\sigma}_P(t\nu)$ (${\sigma}_P$ is defined in \eqref{eq:prj-def}), $t=\epsilon^\alpha$ and $\tau=\epsilon^{\omega}$, with $\alpha,\omega$ satisfying \eqref{eq:alphbar-assumpt}. For $\lambda\in [0,1]$, let
    \begin{equation}\label{eq:psi-lambd}
        \psi_\lambda:=\lambda w_{q,\epsilon}+(1-\lambda) e^{-\frac{f^q}{2}}u_{\epsilon,t}, \qquad \varphi_\lambda:=e^{\frac{f^q}{2}}\psi_\lambda=\lambda e^{\frac{f^q}{2}}w_{q,\epsilon}+(1-\lambda)u_{\epsilon,t},
    \end{equation}
    where $g^q:=e^{f^q}g$ is defined in Section \ref{subsec:green-exp-manif}. Then
    \begin{equation}\label{eq:interp-expansion}
        Q_g(\varphi_\lambda)=Q_{g^q}(\psi_\lambda)=6\mathcal{S}_4-A\epsilon^{2(1-\alpha)}+o\big(\epsilon^{2(1-\alpha)}\big), \qquad \text{as $\epsilon\to0$},
    \end{equation}
    where $A>0$ is given by \eqref{eq:A-formula}.
\end{proposition}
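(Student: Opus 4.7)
First, I would reduce to a single Yamabe quotient computation via conformal invariance. Since $n=4$ and $g^q = e^{f^q} g$, the relation $\varphi_\lambda = e^{f^q/2}\psi_\lambda$ is exactly the conformal change that gives $Q_g(\varphi_\lambda) = Q_{g^q}(\psi_\lambda)$, so it suffices to expand $Q_{g^q}(\psi_\lambda)$. By Proposition~\ref{prop:exp.doub-bub} together with conformal invariance applied to $v := e^{-f^q/2}u_{\epsilon,t}$, and by Proposition~\ref{prop:exp-sch-bub-collapse} applied to $w := w_{q,\epsilon}$, both endpoints $\lambda=0$ and $\lambda=1$ satisfy the desired expansion with the \emph{same} constant $A$.

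Next, I would identify a common ``core'' at scale $\epsilon$ around $q$ and treat $v$ and $w$ as perturbations of it. In $g^q$-normal coordinates $z$ centered at $q$, both $w$ and $v$ equal $U_\epsilon(z)$ plus remainders of size $O(\epsilon^2/t^2) = O(\epsilon^{2(1-\alpha)})$ on $B_\tau(q)$: for $w$ this is built into \eqref{eq:sch-bub-test-M} with the match \eqref{eq:cont-match}; for $v$ it reflects (i) the conformal factor $e^{-f^q/2} = 1 + O(|z|^2)$ since $f^q$ vanishes to second order at $q$, and (ii) the contribution of the antipodal bubble $U_{\epsilon,-t\mathbf{e}_1}$ lifted through $\sigma_P$, which on $B_\tau(q)$ equals $\epsilon^2/(4t^2) + o(\epsilon^2/t^2)$ uniformly. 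Crucially, this antipodal contribution matches, to leading order, the mass-like constant $A_q = 1/(4t^2) + O(t^{b-3})$ carried by $w$ (Lemma~\ref{lem:green-exp-manifold} and Remark~\ref{rem:green-antip-exp}); this is the same ``mirror image at distance $t$'' mechanism that produced the coefficient $A$ in both Propositions~\ref{prop:exp.doub-bub} and~\ref{prop:exp-sch-bub-collapse}. Writing $\psi_\lambda = U_\epsilon + \lambda(w - U_\epsilon) + (1-\lambda)(v - U_\epsilon)$ and splitting integrals as $\int_{B_\tau(q)} + \int_{M \setminus B_\tau(q)}$ exposes this common structure.

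Then I would plug this decomposition into $Q_{g^q}(\psi_\lambda)$ and expand numerator and denominator. Using $R_{g^q}(q) = 0$ and the fact that, to order $\epsilon^{2(1-\alpha)}$, the quantities
\[
\int |\nabla w|^2,\ \int |\nabla v|^2,\ \int \nabla w \cdot \nabla v\ \ \text{and}\ \ \int w^4,\ \int v^4,\ \int w^3v,\ \int v^3 w,\ \int w^2 v^2
\]
all admit the \emph{same} leading correction (each being dominated by the common inner bubble $U_\epsilon$ and receiving an identical $\epsilon^{2(1-\alpha)}$-correction from the ``mirror mass''), one concludes that
\[
\int |\nabla \psi_\lambda|^2_{g^q}\,d\mu_{g^q}\ \text{and}\ \int \psi_\lambda^4\,d\mu_{g^q}
\]
are, to this order, independent of $\lambda$, yielding the claimed expansion \eqref{eq:interp-expansion}. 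The cross integrals between $v$ and $w$ in the regions where they differ substantially (the transition annulus $B_{2\tau}(q) \setminus B_\tau(q)$ and the outer region $M \setminus B_{2\tau}(q)$) are handled via the pointwise decay of $U_\epsilon$ together with the expansion of $G_q$ from Lemma~\ref{lem:green-exp-manifold} and the flat-space interaction estimates~\eqref{eq:F3-estimate}--\eqref{eq:F3-estimate-3}.

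\textbf{Main obstacle.} The delicate point is controlling the mixed terms $\int \nabla w \cdot \nabla v\,d\mu_{g^q}$, $\int w^3 v\,d\mu_{g^q}$, $\int w^2 v^2\,d\mu_{g^q}$ with sufficient precision $o(\epsilon^{2(1-\alpha)})$. The difficulty is that $w$ and $v$ have genuinely different far-field behaviors (Green's function versus antipodal bubble plus cut-off), and the conformal factor $f^q$ is only localized up to scale $t$, causing its derivatives to possibly blow up as $t \to 0$. One needs to verify that the constraint \eqref{eq:alphbar-assumpt} on $\alpha$ and $\omega$ is strong enough for all these cross contributions to match to leading order, exploiting the Taylor expansion of $f^q$ in $|z|$, the decomposition $G_q = 1/|z|^2 + A_q + \beta_q$ from Lemma~\ref{lem:green-exp-manifold}, and the quantitative interaction estimates of Section~\ref{sec:lemmas}.
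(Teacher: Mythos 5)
Your reduction via conformal covariance, your treatment of the endpoints $\lambda=0,1$ through Propositions \ref{prop:exp.doub-bub} and \ref{prop:exp-sch-bub-collapse}, and the splitting at scale $\tau$ using the Green's function expansion of Lemma \ref{lem:green-exp-manifold} all match the paper's setup. However, the central step of your interpolation argument is false: it is \emph{not} true that $\int |\nabla \psi_\lambda|_{g^q}^2\,d\mu_{g^q}$ and $\int \psi_\lambda^4\,d\mu_{g^q}$ are independent of $\lambda$ up to $o(\epsilon^{2(1-\alpha)})$. This already fails at the endpoints: for the double bubble the $L^4$ term is $1+\tfrac{2}{3}A\mathcal{S}_4^{-1}\epsilon^{2(1-\alpha)}+o(\epsilon^{2(1-\alpha)})$ (cf.\ \eqref{eq:den-main-term}, after projecting), while for the Green's-glued bubble it is $1+O(\epsilon^4/\tau^4)=1+o(\epsilon^{2(1-\alpha)})$ (cf.\ \eqref{eq:schest-3}); likewise the gradient terms carry corrections of \emph{opposite sign}, $+\tfrac{A}{3}\epsilon^{2(1-\alpha)}$ in \eqref{eq-n-g-e-f} versus $-\tfrac{A}{3}\epsilon^{2(1-\alpha)}$ coming from the negative boundary (mass) term in \eqref{eq:schexp-mid}--\eqref{eq:diffest-uet-1}. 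The reason your ``common correction'' picture breaks down is that on $B_\tau(q)$ the function $w_{q,\epsilon}$ is exactly $U_\epsilon$, whereas $e^{-f^q/2}u_{\epsilon,t}$ equals $U_\epsilon$ plus the antipodal-bubble contribution $\approx c_4\epsilon/(4t^2)$, which is of size $\epsilon/t^2$ (not $\epsilon^2/t^2$) and, integrated against $U_\epsilon^3$, produces a term exactly of order $\epsilon^{2(1-\alpha)}$; the mirror mass enters $w_{q,\epsilon}$ only through the matching constant in the outer region, i.e.\ through a boundary term in the numerator, and never through the $L^4$ norm. The two competitors thus reach the same quotient expansion by two different mechanisms (interaction raising both numerator and denominator, with the denominator winning, versus mass lowering the numerator only), so your claimed equality of all diagonal and cross integrals cannot hold.

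What a correct proof must reproduce — and what the paper does — is that the corrections are genuinely $\lambda$-dependent and cancel only in the quotient: after lifting, one finds $\int \phi_\lambda^4 = 2+\tfrac{4}{3}(1-\lambda)A\mathcal{S}_4^{-1}\epsilon^{2(1-\alpha)}+o(\epsilon^{2(1-\alpha)})$ (see \eqref{eq:int-den-exp-1}--\eqref{eq:int-den-exp-2}) and $\int|\nabla_{\bar g}\phi_\lambda|_{\bar g}^2 = 2\mathcal{S}_4+(1-2\lambda)\tfrac{A}{3}\epsilon^{2(1-\alpha)}+o(\epsilon^{2(1-\alpha)})$ (see \eqref{eq:int-num-finalexp}); since $(1-2\lambda)-2(1-\lambda)=-1$, Taylor expanding the quotient gives $6\mathcal{S}_4-A\epsilon^{2(1-\alpha)}+o(\epsilon^{2(1-\alpha)})$ for every $\lambda$. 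Obtaining the correct coefficient of the $\lambda(1-\lambda)$ cross terms requires the boundary computations \eqref{eq:diffest-uet-2}--\eqref{eq:diffest-uetm}, together with the comparison Lemmas \ref{lem:error-estimates-interpolation} and \ref{lem:error-estimates-interpolation-boundary} that justify replacing the lifted profiles by exact bubbles. Your plan, which asserts that all these quantities receive an identical correction and that numerator and denominator are separately $\lambda$-independent, would produce wrong intermediate expansions and misses the essential quotient-level cancellation; as written it does not prove the proposition.
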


\begin{remark}
    By the conformal covariance of Yamabe's quotient, we have $Q_g(h)=Q_{g^q}\big(e^{-\frac{f^q}{2}}h\big)$ and, vice-versa, $Q_{g^q}(\tilde{h})=Q_g\big(e^{\frac{f^q}{2}}\tilde{h}\big)$. This motivates the $e^{\pm\frac{f^q}{2}}$-term in the definition of $\psi_\lambda$ above, as well as the first equality in \eqref{eq:interp-expansion}. 
\end{remark}

\subsection{Construction of the competitor}
We are now ready to explicitly define our competitor $\bar{\gamma}$. To begin, consider two distinct singular points, say $P_1, P_2$, having 
closest distance: without loss of generality we can assume 
that $d_g(P_1, P_2) = 1$. Consider next a geodesic $\hat{\gamma}\colon[0,1]\to M$ joining $P_1$ and $P_2$ and a 
 small parameter $\delta>0$. We have then the following properties, by the triangular inequality: 
\begin{equation*}
    \begin{cases*}
        \hat{\gamma}(0)=P_1, \quad \hat{\gamma}(1)=P_2, \\
        \hat{\gamma}\big( (0,1)\big)\cap S=\emptyset, \\
        B^g_{4\delta}\big(\hat{\gamma}(s)\big)\cap\{P_3,\dots,P_l\}=\emptyset & \text{$\forall s\in[0,1]$}, \\
        \hat{\gamma}(s)={\sigma}_{P_1}(s{\nu}_1) & \text{for $s\in[0,\delta]$}, \\
       \hat{\gamma}(s)={\sigma}_{P_2}((1-s){\nu}_2) & \text{for $s\in[1-\delta,1]$,} \\
       B_{\frac{\delta}{2}}^g\big(\hat{\gamma}(s)\big)\cap \{P_1,P_2\} =\emptyset & \text{for $s\in[\delta,1-\delta]$},
   \end{cases*}
\end{equation*}
where $\nu_1,\nu_2\in\Sp^3$ and ${\sigma}_P$ is defined as in \eqref{eq:prj-def}. The last three conditions imply
\begin{equation*}
   \big\{s\mid d_g\big(\hat{\gamma}(s),\{P_1,P_2\}\big)<b\big\}:=[0,b)\cup(1-b,1], \quad \forall \, b\in(0,\delta/2]. 
\end{equation*}
At this point, let $t=t(\mu):=\epsilon^{\alpha}+(\mu-2)(1-2\epsilon^{\alpha})$, $\tau=\tau(t):=\min\{t^{{\omega/\alpha}}, (1-t)^{\omega/\alpha}, (\delta/2)^{\omega/\alpha}\}$ and define the curve $\bar{\bar{\gamma}}\colon[0,5]\to W^{1,2}(M)$ as follows:
\begin{equation}\label{eq:gammabarbar}
    \bar{\bar{\gamma}}(\mu):=\begin{cases*}
        u^{P_1}_{\epsilon,\mu\epsilon^{\alpha}\nu_1} & \text{for $\mu\in[0,1]$}, \\
        \varphi^{P_1}_{\mu-1,\nu_1} & \text{for $\mu\in[1,2]$}, \\
        e^{\frac{f^{\hat{\gamma}(t)}}{2}}w_{\hat{\gamma}(t),\epsilon,\tau(t)} & \text{for $\mu\in[2,3]$}, \\
        \varphi^{P_2}_{4-\mu,\nu_2} & \text{for $\mu\in[3,4]$}, \\
        u^{P_2}_{\epsilon,(5-\mu)\epsilon^{\alpha}\nu_2} & \text{for $\mu\in[4,5]$}.
    \end{cases*}
\end{equation}
Finally, we define $\bar{\gamma}\colon[0,1]\to W^{1,2}(M)$ as:
\begin{equation*}%\label{eq:bub-path}
    \bar{\gamma}(\mu):=\bar{\bar{\gamma}}(5\mu).
\end{equation*}

\begin{proposition}\label{prop:gammabar-yam}
    $\bar{\gamma}\in \Pi$ and, for $\epsilon$ small enough,  there holds $Q_g(\bar{\gamma}(\mu))<\mathcal{Y}\big(\Sp^4,[g_{\Sp^4}]\big)$, $\forall \mu\in [0,1]$.
\end{proposition}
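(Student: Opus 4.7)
The plan is to verify the two claims --- membership in $\Pi$ and the strict upper bound $Q_g(\bar\gamma(\mu)) < 6 \mathcal{S}_4$ --- on each of the five subsegments of $\bar{\bar\gamma}$ separately, applying on each piece the appropriate expansion established earlier in the paper.

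First, continuity and endpoints. Within each of the five pieces, continuity in $W^{1,2}(M)$ is immediate: the double-bubble family $t \mapsto u^{P}_{\epsilon, t\nu}$ varies smoothly in $t$ (the cutoffs are fixed and $\widehat{U}_{\epsilon, t\nu}$ is smooth in $t$), the interpolation $\lambda \mapsto \varphi_\lambda^{P}$ is affine by \eqref{eq:psi-lambd}, and on $[2,3]$ continuity of $t \mapsto e^{f^{\hat\gamma(t)}/2} w_{\hat\gamma(t), \epsilon, \tau(t)}$ follows from the continuous dependence of $G_q$ and $f^q$ on $q$ (Lemma \ref{lem:green-exp-manifold} and Section \ref{subsec:green-exp-manif}) together with the continuity of $t \mapsto \tau(t)$. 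At the internal junctions $\mu \in \{1,2,3,4\}$ the pieces match by construction, because $\varphi_0^{P_1} = u^{P_1}_{\epsilon, \epsilon^\alpha \nu_1}$ and $\varphi_1^{P_1} = e^{f^{q_1}/2} w_{q_1, \epsilon, \tau}$ with $q_1 = \sigma_{P_1}(\epsilon^\alpha \nu_1) = \hat\gamma(\epsilon^\alpha)$, and symmetrically near $P_2$. At the extremes one has $\bar\gamma(0) = u^{P_1}_{\epsilon, 0} = 2\phi_{\epsilon, P_1}$ and $\bar\gamma(1) = 2\phi_{\epsilon, P_2}$; by scale-invariance of $Q_g$, a short rescaling segment prepended/appended connecting $\phi_{\epsilon, P_i}$ to $2\phi_{\epsilon, P_i}$ adjusts the endpoints to land in $\Pi$ without affecting $\max_\mu Q_g(\bar\gamma(\mu))$.

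For the Yamabe estimate I proceed piece by piece. On the outer double-bubble pieces the curve is of the form $u^{P_i}_{\epsilon, t \nu_i}$ with $t \in [0, \epsilon^\alpha]$; pulling back via the equivariant lift $\sigma_{P_i}$ from \eqref{eq:prj-def} and exploiting antipodal symmetry of $\widehat U_{\epsilon, t \nu_i}$, one has $Q_g(u^{P_i}_{\epsilon, t\nu_i}) = \frac{1}{\sqrt{2}} Q_{\tilde g}(\widehat U_{\epsilon, t \nu_i} \chi_\delta)$, and since $\tilde g$ is $C^{1,1}$ at $0$ with expansion \eqref{eq:lift-metr-exp} by Corollary \ref{cor:c1extension}, the right-hand side equals $\frac{1}{\sqrt{2}} Q_{g_{\R^4}}(\widehat U_{\epsilon, t \nu_i}) + o(\epsilon^{2(1-\alpha)})$ uniformly in $t \in [0, \epsilon^\alpha]$ (this is the content of Proposition \ref{prop:exp.doub-bub} at $t = \epsilon^\alpha$, and the quotient is even smaller for smaller $t$ by the monotonicity proven inside the proof of \eqref{eq:almost-mon}). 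Combining with the flat bound $Q_{g_{\R^4}}(\widehat U_{\epsilon, t \nu_i}) < 6\sqrt{2} \mathcal{S}_4$ of \eqref{eq:almost-mon}, one gets $Q_g(u^{P_i}_{\epsilon, t\nu_i}) < 6\mathcal{S}_4$ uniformly on $[0, \epsilon^\alpha]$ for $\epsilon$ small. On the middle Green-Schoen piece I split into the sub-ranges $d_g(\hat\gamma(t), \{P_1, P_2\}) \in [\epsilon^\alpha, \delta/4]$ and $d_g(\hat\gamma(t), \{P_1, P_2\}) \geq \delta/4$: the former is covered by Proposition \ref{prop:exp-sch-bub-collapse}, while on the latter $\tau$ is frozen at $(\delta/2)^{\omega/\alpha}$ and the Schoen-type expansion \eqref{eq:green-test-standard-expansion} applies, with coefficient $A_q$ a positive multiple of the ADM mass of $(M \setminus \{q\}, G_q^2 g)$; strict positivity is granted by the positive mass theorem of \cite{Dai-Sun-Wang-24-preprint} (applicable in our $\Z_2$-conical setting thanks to Corollary \ref{cor:c1extension}), and continuity of $q \mapsto A_q$ on the compact set $\{q : d_g(q, \{P_1, P_2\}) \geq \delta/4\}$ yields a uniform positive lower bound. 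Conformal covariance $Q_g(e^{f^q/2} w_{q, \epsilon}) = Q_{g^q}(w_{q, \epsilon})$ transfers the bound to $\bar\gamma$. Finally, on the two interpolation pieces Proposition \ref{prop:interp-expansion} yields the uniform expansion $Q_g(\varphi_\lambda^{P_i}) = 6 \mathcal{S}_4 - A\epsilon^{2(1-\alpha)} + o(\epsilon^{2(1-\alpha)})$ with $A > 0$ uniformly in $\lambda \in [0,1]$, hence strictly below $6\mathcal{S}_4$ for $\epsilon$ small.

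The main obstacle is the middle piece: one must patch the perturbative bound of Proposition \ref{prop:exp-sch-bub-collapse} (valid only for $t \in [\epsilon^\alpha, \delta/4]$) with the non-perturbative positive-mass control for $t \geq \delta/4$, handle the transition of $\tau(t)$ between the variable value $t^{\omega/\alpha}$ and the frozen one $(\delta/2)^{\omega/\alpha}$, and verify that the two estimates agree strictly and uniformly on a neighborhood of $t = \delta/4$. Once this is in place, combining the four segment-wise bounds gives $\max_{\mu \in [0,1]} Q_g(\bar\gamma(\mu)) < 6\mathcal{S}_4 = \mathcal{Y}(\Sp^4, [g_{\Sp^4}])$ for $\epsilon$ sufficiently small, completing the proof.
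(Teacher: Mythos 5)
Your proof is correct and follows essentially the same route as the paper: continuity holds by construction, and the strict bound is obtained piecewise from Propositions \ref{prop:exp.doub-bub}, \ref{prop:exp-sch-bub-collapse} and \ref{prop:interp-expansion}, together with \eqref{eq:green-test-standard-expansion} and the positive mass theorem of \cite{Dai-Sun-Wang-24-preprint} on the range far from the conical points (your extra care with the endpoints, where $u_{\epsilon,0}=2\phi_{\epsilon,P_i}$ is reconciled with $\Pi$ via scale invariance of $Q_g$, is sound). One small caveat: the proof of \eqref{eq:almost-mon} establishes monotonicity of $a(t)$ and $c(t)$, not of the quotient itself, so on the range $t\in[0,\epsilon^{\alpha}]$ you should drop that parenthetical and rely, as you also do, on the uniform strict bound \eqref{eq:almost-mon} combined with \eqref{eq:near-infty} and scaling (which give a gap of order $\epsilon^{2}/t^{2}\gtrsim\epsilon^{2(1-\alpha)}$) to absorb the $o\big(\epsilon^{2(1-\alpha)}\big)$ metric errors.
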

\begin{proof}
    The continuity of $\bar{\gamma}$ follows directly by construction. In particular, it is clear from \eqref{eq:doubl-bub-test-M}, \eqref{eq:sch-bub-test-M} and \eqref{eq:psi-lambd} that all the test functions $u_{\epsilon,t}$, $\varphi_\lambda$ and $w_{q,\epsilon}$ are continuous in $W^{1,2}(M)$ with respect to the parameters $t,\lambda,q$ (and $\epsilon$); moreover, in \eqref{eq:gammabarbar} we always have continuous transitions between one test function and the other.

    As for the inequality, it is a consequence of Propositions \ref{prop:exp.doub-bub}, \ref{prop:exp-sch-bub-collapse} and \ref{prop:interp-expansion} when the parameter $\mu$ in $\bar{\bar{\gamma}}$ lies in the intervals $[0,2]$ or $[3,5]$, while, for $\mu\in[2,3]$, it is a consequence of Proposition \ref{prop:exp-sch-bub-collapse} and of \eqref{eq:green-test-standard-expansion} coupled with the  positive mass theorem from \cite{Dai-Sun-Wang-24-preprint} for AF conical manifolds.
\end{proof}

\medskip

\begin{proof}[Proof of Theorem \ref{t:main}] 
    By Lemma \ref{lemma:concentration-compactness} and Proposition \ref{prop:gammabar-yam}, the above min-max scheme produces a Palais-Smale sequence     $(\tilde{\gamma}_k)_k$ at level $c \in \left( \frac{\sqrt{2}}{2} \Y_4, \Y_4 \right)$. 
    Since we can always replace $\tilde{\gamma}_k$ by its absolute value without affecting the Yamabe quotient, we can also suppose that $\tilde{\gamma}_k \geq 0$. 
If by contradiction there  is no solution to the Yamabe equation on $(M,g)$, $(\tilde{\gamma}_k)_k$ must develop $j_1$ singular bubbles and 
$j_2$ regular bubbles as in \eqref{eq:PS} (with $j_1 + j_2 \geq 1$), which are positive by the above comment. 

Letting as before $(C(\R\projective^3),g_0):=((0,+\infty)\times \R\projective^3, ds^2+s^2h_0)$ be the metric cone over $\R\projective^3$,
we next notice that every positive finite-energy solution of the Yamabe equation on $(C(\R\projective^3),g_0)$ lifts to a weak and 
hence regular solution to the Yamabe equation in $\R^4$, and must therefore 
be of the type $U$ as in \eqref{eq:std-norm-bub}. As a consequence, the Yamabe quotient of each bubble $u_{p_{i,k},\eps_{i,k}}$ must be equal to 
$\frac{\sqrt{2}}{2} \Y_4$ (and that of each bubble $v_{p_{l,k},\eps_{l,k}}$
equal to $\Y_4$). Hence, it is easy to see that
\begin{equation} \label{eq:j1j2}
    \lim_k Q_g(\tilde{\gamma}_k) = (j_1 + 2 j_2)^{\frac{1}{2}} \frac{\sqrt{2}}{2} \Y_4, 
\end{equation}
which gives a contradiction since $c \in \left( \frac{\sqrt{2}}{2} \Y_4, \Y_4 \right)$. 
\end{proof}

\section{Expansions of the Yamabe quotient}\label{section:expansions}

This technical section contains the expansions of Yamabe's quotients for all the different test functions $u_{\epsilon,t}$, $w_{q,\epsilon}$ and $\psi_\lambda$ defined in Section \ref{sec:variational-argument}.

\subsection{Proof of Proposition \ref{prop:exp.doub-bub}}
\label{sec:doub-bubble}

In order to prove Proposition \ref{prop:exp.doub-bub}, we need to  expand  the Yamabe quotient on $M$ for the ``double bubble'' $u_{\epsilon,t}$ defined in \eqref{eq:doubl-bub-test-M}.
 For the sake of clarity, we recall that, in what follows, $\delta>0$ is a small fixed number and $t,\epsilon$ are two positive small parameters such that $t=t(\epsilon)\to 0$ as $\epsilon\to0$; as it will be clear later, we will also assume that $\epsilon/t\to0$ as $\epsilon\to0$. We will use $C$ to denote various positive constants which are always greater than $1$, which may change from line to line and which \emph{do not depend} upon $\epsilon,t$ and $\delta$. Moreover, for the sake of simplicity, we will denote $\widehat{U}_{\epsilon,t}(y):=\widehat{U}_{\epsilon,t\mathbf{e}_1}(y)$.

By definition, the Yamabe quotient $Q_g(u_{\epsilon,t})$ satisfies
\begin{align}\label{eq:sob-quot-db-loc}
    Q_g(u_{\epsilon,t})&=\frac{\int_M \big(a\abs{\nabla_{g} u_{\epsilon,t}}_{{g}}^2+R_{g} (u_{\epsilon,t})^2\big)\,d\mu_{g}}{\Big(\int_M \abs{u_{\epsilon,t}}^4\,d\mu_g\Big)^\frac{1}{2}}=\frac{\int_{B_{2\delta}} \big(a\abs{\nabla_{\tilde{g}} \tilde{u}_{\epsilon,t}}_{\tilde{g}}^2+R_{\tilde{g}} (\tilde{u}_{\epsilon,t})^2\big)\,d\mu_{\tilde{g}}}{\sqrt{2}\Big(\int_{B_{2\delta}} \abs{\tilde{u}_{\epsilon,t}}^4\,d\mu_{\tilde{g}}\Big)^{\frac{1}{2}}}, 
\end{align}
that is, $Q_g(u_{\epsilon,t})=\frac{1}{\sqrt{2}}Q_{\tilde{g}}(\tilde{u}_{\epsilon,t})$, where we recall that $\tilde{u}_{\epsilon,t}(y):=\widehat{U}_{\eps,t}(y)\chi_\delta(\abs{y})$.

 \subsubsection*{Denominator}
Recalling \eqref{eq:lift-metr-exp}, one has
\begin{align}
\label{eq:den-exp-gen}
\int_{B_{2\delta}}\abs{\tilde{u}_{\epsilon,t}}^4\,d\mu_{\tilde{g}}=\int_{B_\delta}\abs{\widehat{U}_{\epsilon,t}}^4(1+O(\abs{y}^2))\,dy+\int_{B_{2\delta}\backslash B_\delta}\abs{\widehat{U}_{\epsilon,t}}^4\chi_\delta^4(\abs{y})(1+O(\abs{y}^2))\,dy.
\end{align}
First of all, we focus on the principal term of \eqref{eq:den-exp-gen}, that is,
\begin{equation}\label{eq:denom-double-bubble}
    \int_{B_\delta}\abs{\widehat{U}_{\epsilon,t}}^4\,dy=\int_{B_\delta}\big( \Uet^4+\Uetm^4+4\Uet^3\Uetm+4\Uet\Uetm^3+6\Uet^2\Uetm^2\big)\,dy.
\end{equation}
We have, as $\epsilon\to 0$,
\begin{align}
    \notag
    \int_{B_\delta}\Uetm^4\,dy=\int_{B_\delta}\Uet^4\,dy &=1-c_4^4\int_{\R^4\backslash B_\delta}\frac{\epsilon^{-4}}{(1+\epsilon^{-2}\abs{y-t\mathbf{e}_1}^2)^4}\,dy \\
    \label{eq:buest1}
    &= 1+O(\epsilon^4)\int_{\R^4\backslash B_\delta}\frac{dy}{\abs{y}^8}= 1+O(\epsilon^4).
\end{align}
Consider the last term of \eqref{eq:denom-double-bubble}:
\begin{equation*}
    \int_{B_\delta}\Uet^2\Uetm^2=\int_{B_\delta}\frac{c_4^4\epsilon^{-4}}{\Uetden^2\Uetmden^2}\,dy.
\end{equation*}
We split $B_\delta$ as follows:
\begin{equation}\label{eq:int-split-1}
B_\delta=B_{t/2}(t\mathbf{e}_1)\cup B_{t/2}(-t\mathbf{e}_1)\cup \big( B_{2t}(0)\backslash(B_{t/2}(t\mathbf{e}_1)\cup B_{t/2}(-t\mathbf{e}_1))\big)\cup \big( B_\delta\backslash B_{2t(0)}\big).
\end{equation}
One has
\begin{align*}
\Uetmint\Uet^2\Uetm^2&=\Uetint\Uet^2\Uetm^2\leq \frac{C}{t^4}\Uetint\frac{dy}{\Uetden^2} \\
&\leq \frac{C}{t^4}\Big(\Uetintint\,dy+\Uetintext\frac{\epsilon^4\,dy}{\abs{y-t\mathbf{e}_1}^4}\Big) \\
&\leq \frac{C}{t^4}\Big( \epsilon^4 +\epsilon^4 \log\Big(\frac{t}{\epsilon}\Big)\Big)\leq C\frac{\epsilon^4}{t^4}\log\Big(\frac{t}{\epsilon}\Big),  \quad \text{as $\epsilon\to 0$},
\end{align*}

\begin{align*}
    \Uetmidint\Uet^2\Uetm^2&\leq C\frac{\epsilon^4}{t^8}\int_{B_{2t}(0)}\,dy \leq C\frac{\epsilon^4}{t^4}, \quad \text{as $\epsilon\to 0$},
\end{align*}

\begin{align*}
    \Uetextint\Uet^2\Uetm^2\leq  C\epsilon^4\Uetextint\frac{dy}{\abs{y}^8}\leq C\frac{\epsilon^4}{t^4}, \quad \text{as $\epsilon\to0$},
\end{align*}
so, in the end, one gets
\begin{equation}\label{eq:buest3}
    \int_{B_\delta} \Uet^2\Uetm^2=O\Big(\frac{\epsilon^4}{t^4}\log\frac{\epsilon}{t}\Big)=o\Big(\frac{\epsilon^2}{t^2}\Big), \quad \text{as $\epsilon\to0$}.
\end{equation}
%%%%%%%%%%%%%%%%%%%%%%%%%%%%%%%%%%%%%%%%
Consider now the third and fourth terms of \eqref{eq:denom-double-bubble}. Let $\tau=\tau(\epsilon)>0$ be another parameter such that $0<\epsilon\ll \tau \ll t$. We claim that 
\begin{equation}\label{eq:est-uetuetm-int}
    \int_{B_\delta \backslash B_\tau(t\mathbf{e}_1)}\Uet^3 \Uetm\,dy=o\Big(\frac{\epsilon^2}{t^2}\Big), \quad \text{as $\epsilon\to0$.}
\end{equation}
Indeed, this can be easily seen by splitting the integral on the three regions $B_{2t}(0)\backslash\big( B_{t/2}(-t\mathbf{e}_1)\cup B_\tau(t\mathbf{e}_1)\big)$, $B_{t/2}(-t\mathbf{e}_1)$, $B_\delta \backslash B_{2t}(0)$ and arguing similarly to what did in order to obtain \eqref{eq:buest3}.
Using this formula together with \eqref{eq:Y-eq-Rn} and an integration by parts, we get
\begin{align*}
    \int_{B_\delta} \Uet^3 \Uetm \,dy&=\int_{B_\tau(t\mathbf{e}_1)}\Uet^3\Uetm \,dy+o\Big(\frac{\epsilon^2}{t^2}\Big) \\
    &=-\frac{1}{\mathcal{S}_4}\int_{\p B_\tau(t\mathbf{e}_1)}(\p_\nu \Uet) \Uetm\,d\sigma+\frac{1}{\mathcal{S}_4}\int_{B_\tau(t\mathbf{e}_1)}\nabla \Uet \nabla \Uetm \,dy+o\Big(\frac{\epsilon^2}{t^2}\Big).
\end{align*}
We then observe that, for $y\in B_\tau(x)$, one has $\abs{\Uetm(y)} \lesssim \epsilon t^{-2}$ and $\abs{\nabla \Uetm}\lesssim \epsilon t^{-3}$, hence
\begin{equation}\label{eq:uetuetm-est}
    \Big\vert\int_{B_\tau(t\mathbf{e}_1)}\nabla \Uet\cdot \nabla \Uetm\,dy\Big\vert\leq C \frac{\epsilon}{t^3}\int_{B_\tau(0)}\frac{\epsilon^{-3}\abs{y}}{(1+\epsilon^{-2}\abs{y}^2)^2}\,dy=O\Big(\frac{\epsilon^2 \tau}{t^3}\Big)=o\Big(\frac{\epsilon^2}{t^2}\Big).
\end{equation}
Also, a direct computation shows that
\begin{equation*}
    \int_{\p B_\tau(t\mathbf{e}_1)}(\p_\nu \Uet) \Uetm\,d\sigma=-\pi^2 c_4^2 \frac{\epsilon^2}{t^2}+o\Big(\frac{\epsilon^2}{t^2}\Big),
\end{equation*}
therefore
\begin{align}\label{eq:buest2}
    \int_{B_\delta}\Uet^3 \Uetm \,dy=\frac{\pi^2 c_4^2}{\mathcal{S}_4} \frac{\epsilon^2}{t^2}+o\Big(\frac{\epsilon^2}{t^2}\Big), 
\end{align}
and the same is true by symmetry for the integral of $\Uet \Uetm^3$ over $B_\delta$.
Substituting \eqref{eq:buest1}, \eqref{eq:buest3} and \eqref{eq:buest2} into \eqref{eq:denom-double-bubble}, we finally obtain
\begin{equation}\label{eq:den-main-term}
    \int_{B_\delta}\abs{\widehat{U}_{\epsilon,t}}^4\,dy=2+\frac{4}{3}\frac{A}{\mathcal{S}_4}\frac{\epsilon^2}{t^2}+o\Big(\frac{\epsilon^2}{t^2}\Big), \quad \text{as $\epsilon\to0$},
\end{equation}
where 
\begin{equation}\label{eq:A-formula}
    A:=6\pi^2 c_4^2.
\end{equation}
We now want to show that, under a suitable choice of $t$ as a function of $\epsilon$, the remaining terms of \eqref{eq:den-exp-gen} generate a lower order contribution as $\epsilon\to0$. 
Splitting the integrals similarly to \eqref{eq:int-split-1}, we estimate
\begin{multline}
    \left|\int_{B_\delta}\Uet^4O(\abs{y}^2)\,dy\right|\leq C\int_{B_\delta}\Uet^4\abs{y}^2\,dy\leq C\int_{B_\delta}\frac{\epsilon^{-4}\abs{y}^2}{\Uetden^4}\,dy \\
    \leq C\Big( \int_{B_{t/2}(0)}\frac{\epsilon^{4}\abs{y}^2}{t^8}\,dy+t^2\int_{B_{t/2}(t\mathbf{e}_1)} \Uet^4\,dy 
    +\int_{B_{2t}\backslash( B_{t/2}(0)\cup B_{t/2}(t\mathbf{e}_1))}\frac{\epsilon^{4}}{t^6}\,dy+\int_{B_\delta\backslash B_{2t}}\frac{\epsilon^{4}}{\abs{y}^6}\,dy\Big) \\
    \label{eq:err-d-est1}
    \leq C\Big( \frac{\epsilon^4}{t^2}+t^2+\frac{\epsilon^4}{t^2}+\frac{\epsilon^4}{\delta^2}\Big)= O(t^2)+o\Big(\frac{\epsilon^2}{t^2}\Big), \quad \text{as $\epsilon\to0$}.
\end{multline}
Moreover, splitting again the integrals as in \eqref{eq:int-split-1} and arguing as above, it is also easy to show that
\begin{equation}\label{eq:err-d-est2}
    \int_{B_\delta}\big(\Uet^3 \Uetm+ \Uet^2\Uetm^2+\Uet\Uetm^3\big)\abs{y}^2\,dy=o\Big(\frac{\epsilon^2}{t^2}\Big), \quad \text{as $\epsilon\to0$}.
\end{equation}
We now assume the following:
\begin{equation}\label{eq:alpha-assump}
    t=t(\epsilon):=\epsilon^{\alpha}, \quad \text{for $1/2<\alpha<1$}.
\end{equation}
From \eqref{eq:err-d-est1}, \eqref{eq:err-d-est2} and \eqref{eq:alpha-assump} we get
\begin{equation}\label{eq:den-hot}
     \int_{B_\delta}\abs{\widehat{U}_{\epsilon,t}}^4\abs{y}^2\,dy=O(t^2)+o\Big(\frac{\epsilon^2}{t^2}\Big)= o\Big(\frac{\epsilon^2}{t^2}\Big)= o(\epsilon^{2(1-\alpha)}), \quad \text{as $\epsilon\to0$}.
\end{equation}
Consider now the last term of \eqref{eq:den-exp-gen}; being $\abs{\widehat{U}_{\epsilon,t}(y)}\leq (C \epsilon)/{\abs{y}^2}$ for $\abs{y}\geq\delta$, one gets
\begin{equation}\label{eq:err-d-est3}
    \int_{B_{2\delta}\backslash B_\delta}\abs{\widehat{U}_{\epsilon,t}}^4\chi_\delta^4(\abs{y})\abs{y}^2\,dy\leq C \int_{B_{2\delta}\backslash B_\delta}\frac{\epsilon^4}{\abs{y}^8}\,dy=O(\epsilon^4), \quad \text{as $\epsilon\to0$}.
\end{equation}

By virtue of \eqref{eq:den-main-term}, \eqref{eq:den-hot} and \eqref{eq:err-d-est3}, we can now display the complete expansion of \eqref{eq:den-exp-gen}:
\begin{equation}\label{eq:den-db-expansion}
    \int_{B_{2\delta}}\abs{\tilde{u}_{\epsilon,t}}^4\,d\mu_{\tilde{g}}=2+8\frac{\pi^2 c_4^2}{\mathcal{S}_4}\frac{\epsilon^2}{t^2}+o\Big(\frac{\epsilon^2}{t^2}\Big)=2+\frac{4}{3}\frac{A}{\mathcal{S}_4}\epsilon^{2(1-\alpha)}+o(\epsilon^{2(1-\alpha)}), \quad \text{as $\epsilon\to0$.}
\end{equation}

\subsubsection*{Numerator} 
We now want to expand the numerator of \eqref{eq:sob-quot-db-loc}:
\begin{align}
\notag
    \int_{B_{2\delta}} \big(a&\abs{\nabla_{\tilde{g}} \tilde{u}_{\epsilon,t}}_{\tilde{g}}^2+R_{\tilde{g}} (\tilde{u}_{\epsilon,t})^2\big)\,d\mu_{\tilde{g}}=\int_{B_\delta}\big(a\abs{\nabla_{\tilde{g}} \Uethat}_{\tilde{g}}^2+R_{\tilde{g}} (\Uethat)^2\big)(1+O(\abs{y}^2))\,dy \\
    \label{eq:num-exp-gen}
    &+\int_{B_{2\delta}\backslash B_\delta}\big(a\abs{\chi_\delta \nabla_{\tilde{g}}\Uethat+\Uethat \nabla_{\tilde{g}}\chi_\delta}_{\tilde{g}}^2+R_{\tilde{g}}\chi_\delta^2\Uethat^2\big)(1+O(\abs{y}^2))\,dy.
\end{align}
If $\tilde{g}$ is as in \eqref{eq:lift-metr-exp}, then, writing $\tg^{-1}$ as a Neumann series, we immediately see that 
\begin{equation*}
\tilde{g}^{ij}(y)=\delta^{ij}+O(\abs{y}^2),
\end{equation*}
therefore
\begin{equation}\label{eq:gradg-est}
\left|\abs{\nabla_{\tilde{g}}u}_{\tilde{g}}^2-\abs{\nabla u}^2\right|\leq C\abs{y}^2\abs{\nabla u}^2.
\end{equation}

To begin, consider the main term
\begin{align}\label{eq:num-grad-exp}
\int_{B_\delta}\abs{\nabla\Uethat}^2\,dy=2\int_{B_\delta}\abs{\nabla\Uet}^2\,dy+2\int_{B_\delta}\nabla\Uet\nabla\Uetm\,dy.
\end{align}
Integrating by parts, we get
\begin{align*}
    \int_{B_\delta}\abs{\nabla\Uet}^2\,dy&=\int_{\R^4}\abs{\nabla\Uet}^2\,dy-\int_{\R^4\backslash B_\delta}\abs{\nabla\Uet}^2\,dy \\
    &=\int_{\R^4} (-\Delta\Uet)\Uet\,dy-\int_{\R^4\backslash B_\delta}\frac{4 c_4^2 \epsilon^{-6}\abs{y-t\mathbf{e}_1}^2}{\Uetden^4}\,dy \\
    &=\mathcal{S}_4\int_{\R^4}\Uet^4\,dy+O(\epsilon^2)\int_{\R^4\backslash B_\delta}\frac{dy}{\abs{y}^6}=\mathcal{S}_4+O(\epsilon^2), \quad \text{as $\epsilon\to0$},
\end{align*}
where we used \eqref{eq:Y-eq-Rn} in the last line. For the other term of \eqref{eq:num-grad-exp}, by an integration by parts, \eqref{eq:Y-eq-Rn} and \eqref{eq:buest2} we get
\begin{align}\label{eq:gradprod-integral}
    \int_{B_\delta}\nabla\Uet\nabla\Uetm=\pi^2 c_4^2\frac{\epsilon^2}{t^2}+o\Big(\frac{\epsilon^2}{t^2}\Big), \quad  \text{as $\epsilon\to0$}.
\end{align}
These estimates and \eqref{eq:A-formula} imply
\begin{equation}\label{eq-n-g-e-f}
    \int_{B_\delta}\abs{\nabla\Uethat}^2=2\mathcal{S}_4+\frac{A}{3}\frac{\epsilon^2}{t^2}+o\Big(\frac{\epsilon^2}{t^2}\Big), \quad \text{as $\epsilon\to0$}.
\end{equation}

We now turn to the lower-order terms of \eqref{eq:num-exp-gen}. By virtue of \eqref{eq:gradg-est} and \eqref{eq:lift-metr-exp} we see that
\begin{align}
\notag
    \Big|\int_{B_\delta}\abs{\nabla_{\tilde{g}}\Uethat}^2(1+O(\abs{y}^2))\,dy-&\int_{B_\delta}\abs{\nabla \Uethat}^2\,dy\Big|\leq  C\int_{B_\delta}\abs{\nabla\Uethat}^2\abs{y}^2\,dy \\
    \label{eq:estrem-1}
    &\leq C\Big( \int_{B_\delta}\abs{\nabla\Uet}^2\abs{y}^2\,dy+\int_{B_\delta}\nabla\Uet\nabla\Uetm\abs{y}^2\,dy\Big).
\end{align}
Splitting the integrals as in \eqref{eq:err-d-est1}, after some basic estimates we get
\begin{align*}
    \int_{B_\delta}\abs{\nabla\Uet}^2\abs{y}^2\,dy&=4c_4^2\int_{B_\delta}\frac{\epsilon^{-6}\abs{y-t\mathbf{e}_1}^2\abs{y}^2}{\Uetden^4}\,dy \\ 
    &\leq C\Bigg\{ \frac{\epsilon^2}{t^6}\int_{B_{t/2}}\abs{y}^2\,dy+\frac{t^2}{\epsilon^6}\Big(\int_{B_\epsilon(t\mathbf{e}_1)}\abs{y-t\mathbf{e}_1}^2\,dy+\int_{B_{t/2}\backslash B_\epsilon(t\mathbf{e}_1)}\frac{\epsilon^8}{\abs{y-t\mathbf{e}_1}^6}\,dy\Big) \\
   & +\int_{B_{2t}\backslash( B_{t/2}(0)\cup B_{t/2}(t\mathbf{e}_1))}\frac{\epsilon^2}{t^4}\,dy+\int_{B_\delta\backslash B_{2t}}\frac{\epsilon^{2}} {\abs{y}^4}\,dy\Bigg\} \\[0.8ex]
   & = O(\epsilon^2)+ O(t^2)+O(\epsilon^2\log t) =O(t^2)= O(\epsilon^{2\alpha}), \quad \text{as $\epsilon\to0$},
\end{align*}
where the last equality follows from \eqref{eq:alpha-assump}.
For the other term, arguing similarly we obtain
\begin{align*}
    \left|\int_{B_\delta}\nabla\Uet\nabla\Uetm\abs{y}^2\,dy\right|= O(\epsilon^2) + O(\epsilon^2 \log t), \quad \text{as $\epsilon\to0$.}
\end{align*}
We can now apply those estimates to \eqref{eq:estrem-1} to get 
\begin{align}\label{eq:estrem2}
    \Big|\int_{B_\delta}\abs{\nabla_{\tilde{g}}\Uethat}^2(1+O(\abs{y}^2))\,dy-\int_{B_\delta}\abs{\nabla \Uethat}^2\,dy\Big|=O(t^2)=O(\epsilon^{2\alpha}), \quad \text{as $\epsilon\to0$.}
\end{align}

Consider now the scalar curvature term of \eqref{eq:num-exp-gen}; recalling \eqref{eq:lift-metr-exp} and $R_{\tilde{g}}\in L^\infty(B_{2\delta})$, one has
\begin{align*}
    \Big|\int_{B_\delta} R_{\tilde{g}} (\Uethat)^2(1+O(\abs{y}^2))\,dy\Big|\leq  C\int_{B_\delta}\big(\Uet^2+\Uet\Uetm\big)\,dy.
\end{align*}
But
\begin{align*}
    \int_{B_\delta}\Uet^2 = c_4^2\int_{B_\delta}\frac{\epsilon^{-2}}{\Uetden^2}\,dy&\leq C\int_{B_\epsilon(t\mathbf{e}_1)}\epsilon^{-2}\,dy+C\int_{B_{2\delta}\backslash B_{\epsilon}(0)}\frac{\epsilon^2}{\abs{y}^4}\,dy \\
    &= O(\epsilon^2) +O(\epsilon^2\log\epsilon), \quad \text{as $\epsilon\to0$},
\end{align*}
while 
\begin{align*}
    \int_{B_\delta}\Uet\Uetm&=c_4^2\int_{B_\delta}\frac{\epsilon^{-2}}{\Uetden\Uetmden}\,dy \\
    &\leq C\bigg( \int_{B_{t/2}(t \mathbf{e}_1)}\frac{dy}{t^2\Uetden}+C\epsilon^2+\Uetextint\frac{\epsilon^2}{\abs{y}^2}\,dy\bigg) \\
    &=O\Big(\frac{\epsilon^4}{t^2}\Big)+O(\epsilon^2)+O(\epsilon^2\log t)=O(\epsilon^2\log t), \quad \text{as $\epsilon\to0$.}
\end{align*}
Hence
\begin{align}\label{eq:estrem3}
    \int_{B_\delta} R_{\tilde{g}} (\Uethat)^2(1+O(\abs{y}^2))\,dy= O(\epsilon^2\log\epsilon), \quad \text{as $\epsilon\to0$}.
\end{align}
Finally,for $\abs{y}\geq\delta$ (fixed), an easy estimate on the last term of \eqref{eq:num-exp-gen} gives
\begin{align}\label{eq:estrem4}
   \Big| \int_{B_{2\delta}\backslash B_{\delta}}\big(a\abs{\nabla_{\tilde{g}} \tilde{u}_{\epsilon,t}}_{\tilde{g}}^2+R_{\tilde{g}} (\tilde{u}_{\epsilon,t})^2\big)\,d\mu_{\tilde{g}}\Big|\leq C\epsilon^2. 
\end{align}
Substituting \eqref{eq-n-g-e-f}, \eqref{eq:estrem-1}, \eqref{eq:estrem2}, \eqref{eq:estrem3} and \eqref{eq:estrem4} inside \eqref{eq:num-exp-gen} we get, after recalling \eqref{eq:alpha-assump} and that $a=6$, the following expansion of the numerator:
\begin{align}\label{eq:num-db-exp}
\int_{B_{2\delta}}\big(a\abs{\nabla_{\tilde{g}} \tilde{u}_{\epsilon,t}}_{\tilde{g}}^2+R_{\tilde{g}} (\tilde{u}_{\epsilon,t})^2\big)\,d\mu_{\tilde{g}}=12\mathcal{S}_4+2A\epsilon^{2(1-\alpha)}+o\big(\epsilon^{2(1-\alpha)}\big), \quad \text{as $\epsilon\to0$.}
\end{align}

\subsubsection*{Conclusion}
From \eqref{eq:sob-quot-db-loc}, \eqref{eq:den-db-expansion}, \eqref{eq:num-db-exp} and a Taylor expansion of the denominator, we have
\begin{align*}
Q_g(u_{\epsilon,t})=\frac{6\mathcal{S}_4+A\epsilon^{2(1-\alpha)}+o\big(\epsilon^{2(1-\alpha)}\big)}{\Big(1+\frac{2}{3}A \mathcal{S}_4^{-1}\epsilon^{2(1-\alpha)}+o\big(\epsilon^{2(1-\alpha)}\big)\Big)^{\frac{1}{2}}} =6\mathcal{S}_4-A\epsilon^{2(1-\alpha)}+o\big(\epsilon^{2(1-\alpha)}\big), \quad \text{as $\epsilon\to 0$}.
\end{align*}

This proves Proposition \ref{prop:exp.doub-bub}. In particular,  $Q_g(u_{\epsilon,t})<6\mathcal{S}_4=\mathcal{Y}\big(\Sp^4,[g_{\Sp^4}]\big)$ when $\epsilon$ is small enough.

\subsection{Proof of Proposition \ref{prop:exp-sch-bub-collapse}}\label{sec:greenbubl-expansion}

In this subsection we are going to prove Proposition \ref{prop:exp-sch-bub-collapse}.
The computations are similar to those performed in \cite{Lee-Parker-87} (although here we work directly on $(M,g^q)$ instead of working on its conformal blow-up at $q$), but we need to be more careful as the mass of the Green's function $G_q$ and the gluing parameter $\tau$ are not fixed but depend upon the distance from the conical point $P$.

Assume that $t:=d_{g}(q,P)\leq \delta/4$; by virtue of Lemma \ref{lem:green-exp-manifold}, we know that the Green's function for $L_{g^q}$ has the following expansion in normal coordinates for $g^q$ around $q$:
\begin{equation*}
    G_q(z)=\frac{1}{\abs{z}^2}+A_q+ \beta_q(z),
\end{equation*}
where we further recall that
\begin{equation}\label{eq:betaq-est}
    \beta_q(0)=0, \quad \abs{{\beta}_q(z)}\leq C t^{b-3}, \quad \abs{\nabgq \beta_q}\leq Ct^{-3} \quad \text{for any $b>1$, $C=C(b)>0$  and $\forall\, \abs{z}\leq t^b$}.
\end{equation}

\medskip

\noindent
\textbf{Assumption.} In the following, we will take $\epsilon^\alpha\leq t\leq \delta/4$ and $\tau=\tau(t):=t^{\omega/\alpha}$, where, at the moment, we only require $1>\omega>\alpha>\frac{1}{2}$. We also assume that the above constant $b$  satisfies $\omega/\alpha>b>1$.

\medskip

We now compute the various terms in the expansion of $Q_{g^q}(w_{q,\epsilon})$, where $w_{q,\epsilon}$ is defined in \eqref{eq:sch-bub-test-M}.
To begin, recalling that in normal coordinates for $g^q$ around $q$ one has $\dmuq(x)=1+t^{-2}O(\abs{z}^4)$ (cf. \eqref{eq:vol-det-exp}), we compute
\begin{align}
\notag
    \int_{B_\tau^{g^q}(q)}\abs{\nabgq w_{q,\epsilon}}^2\dmuq &=\int_{B_\tau(0)}\abs{\nabla U_\epsilon}^2\big(1+ t^{-2}O(\abs{z}^4)\big)\,dz \\
    \notag
    &=\int_{B_\tau}(-\Delta U_\epsilon) U_\epsilon \,dz +\int_{\partial B_\tau}(\partial_\nu U_\epsilon) U_\epsilon\,d\sigma+t^{-2}\int_{B_{\tau}}\abs{\nabla U_\epsilon}^2 O(\abs{z}^4)\,dz \\
    \notag
    &=\mathcal{S}_4 \int_{B_\tau} U_\epsilon^4 \,dz +\int_{\partial B_\tau}(\partial_\nu U_\epsilon) U_\epsilon\,d\sigma+O(t^{-2}\epsilon^2 \tau^2) \\
    \label{eq:schest-1}
    &=\mathcal{S}_4+\int_{\partial B_\tau}(\partial_\nu U_\epsilon) U_\epsilon\,d\sigma+O\Big(\frac{\epsilon^4}{\tau^4}\Big)+O(t^{-2}\epsilon^2\tau^2).
\end{align}
Regarding the scalar curvature term, being $R_{g^q}=t^{-2}O(\abs{z}^2)$ in our coordinate system (see \eqref{eq:sc-curv-exp})
\begin{align}
\notag
    \int_{B_\tau^{g^q}(q)} \abs{R_{g^q}}w_{q,\epsilon}^2\dmuq&\leq C\int_{B_\tau(0)}\frac{t^{-2}\abs{z}^2\epsilon^{-2}}{\big(1+\epsilon^{-2}\abs{z}^2\big)^2}\,dz \\
    \label{eq:schest-2}
    &\leq C t^{-2}\int_{B_\epsilon}\abs{z}^2\epsilon^{-2}\,dz +t^{-2}\int_{B_\tau\backslash B_\epsilon}\frac{\epsilon^2}{\abs{z}^2}\,dz \leq C t^{-2}\epsilon^2 \tau^2.
\end{align}
Before looking at the integral of the numerator outside $B_\tau^{g^q}(q)$, we  first  expand the denominator:
\begin{align*}
    \int_M (w_{q,\epsilon})^4\dmuq=1+ O\Big(\frac{\epsilon^4}{\tau^4}\Big)+\int_{M\backslash B_\tau^{g^q}(q) }\Big(\frac{1}{\nu}G_q\Big)^4\dmuq. 
\end{align*}
Using the estimate $\abs{G_q(p)}\leq\frac{C}{d_{g^q}(p,q)^2}$ for some constant $C>0$ together with the fact that $\nu= O(\epsilon^{-1})$ (see \eqref{eq:cont-match} or \eqref{eq:estest3} below), we see that
\begin{align*}
    \int_{M\backslash B_\tau^{g^q}(q) }\Big(\frac{1}{\nu}G_q\Big)^4\dmuq=O\Big(\frac{\epsilon^4}{\tau^4}\Big),
\end{align*}
therefore 
\begin{align}\label{eq:schest-3}
    \int_M (w_{q,\epsilon})^4\dmuq=1 +O\Big(\frac{\epsilon^4}{\tau^4}\Big). 
\end{align}

We now focus on the integral of the numerator on the region $M\backslash B_\tau^{g^q}(q)$. By definition of $w_{q,\epsilon}$ (see \eqref{eq:sch-bub-test-M}), one has
\begin{align}
\notag
    &\int_{M\backslash B_\tau^{g^q}(q)}\big(a\abs{\nabgq w_{q,\epsilon}}^2+ R_{g^q}w_{q,\epsilon}^2\big)\dmuq= \frac{1}{\nu^2}\int_{M\backslash B_\tau^{g^q}(q)}\big( a\abs{\nabgq G_q}^2+R_{g^q} G_q^{2}\big) \dmuq \\
    \label{eq:ringr}
    &+\frac{1}{\nu^2}\int_{A_{\tau,2\tau}}\Big[a\abs{\nabgq(\chi_\tau \beta_q)}^2+ R_{g^q}\chi_{\tau}^2\beta_q^2-2 a\nabgq G_q\cdot \nabgq(\chi_\tau\beta_q)-2 R_{g^q}G_q\chi_\tau\beta_q\Big]\dmuq, 
\end{align}
where $A_{\tau,2\tau}:=\{z\in M \mid \tau<d_{g^q}(z,P)<2\tau\}$. Consider the first integral on the RHS of \eqref{eq:ringr}: by definition of $G_q$, we can integrate by parts and obtain
\begin{align}\label{eq:ringbt1}
    \int_{M\backslash B_\tau^{g^q}(q)}\big( a\abs{\nabgq G_q}^2+R_{g^q} G_q^{2}\big) \dmuq= - a\int_{\partial B_\tau^{g^q}(q)}\big(\partial_{\nu}G_q\big) G_q \,d\sigma_{g^q},
\end{align}
where the negative sign is due to the fact that the unit normal $\nu$ is \emph{outward pointing}. Consider now the second integral in \eqref{eq:ringr}; by \eqref{eq:chi-delta} and \eqref{eq:betaq-est}, we easily see that
\begin{align*}
    \abs{\nabgq(\chi_\tau\beta_q)}^2 &=\abs{\nabgq \chi_\tau}^2\beta_q^2+\abs{\nabgq \beta_q}^2\chi_\tau^2+2 \nabgq \chi_\tau \cdot \nabgq \beta_q \\
    &\leq C\Big(\frac{1}{\tau^2 t^{6-2b}}+\frac{1}{t^{6}}+\frac{1}{\tau t^{3}}\Big) \leq \frac{C}{\tau^2 t^{6-2b}+t^6},
\end{align*}
and also
\begin{gather*}
    \abs{\chi_\tau^2\beta_q^2 R_{g^q}}\leq Ct^{2b-6}, \\
    \abs{R_{g^q}G_q\chi_\tau\beta_q}\leq \frac{C}{\tau^2t^{3-b}}, \\
    \abs{\nabgq G_q \cdot \nabgq(\chi_\tau \beta_q)}\leq C\Big( \frac{1}{\tau^4 t^{3-b}}+\frac{1}{\tau^3 t^3}\Big).
\end{gather*}
Hence, recalling the relation between $\tau$ and $t$ and the fact that $\omega/\alpha>b>1$, we see that the main contribution comes from the last term and is of order $O(\tau^{-4}t^{b-3})$. As a consequence,
\begin{align}
\notag
    \frac{1}{\nu^2}  \int_{A_{\tau,2\tau}}\Big|\abs{\nabgq(\chi_\tau \beta_q)}^2+ & R_{g^q}\chi_{\tau}^2\beta_q^2-2 a\nabgq G_q\cdot \nabgq(\chi_\tau\beta_q)-2 R_{g^q}G_q\chi_\tau\beta_q\Big|\dmuq \\
    \label{eq:ringrt2}
    &\leq C\int_{A_{\tau,2\tau}}\frac{\epsilon^2}{\tau^4 t^{3-b}}\dmuq\leq C\frac{\epsilon^2}{t^{3-b}}.
\end{align}
Combining \eqref{eq:ringr}, \eqref{eq:ringbt1} and \eqref{eq:ringrt2}, one gets
\begin{align}\label{eq:schest-4}
    \int_{M\backslash B_\tau^{g^q}(q)}\big(a\abs{\nabgq w_{q,\epsilon}}^2+ R_{g^q}w_{q,\epsilon}^2\big)\dmuq=-\frac{a}{\nu^2}\int_{\partial B_\tau^{g^q}(q)}\big(\partial_{\nu}G_q\big) G_q \,d\sigma_{g^q} +O\Big(\frac{\epsilon^2}{t^{3-b}}\Big).
\end{align}

Finally, putting \eqref{eq:schest-1}, \eqref{eq:schest-2}, \eqref{eq:schest-3} and \eqref{eq:schest-4} together and recalling that $a=6$, one obtains the following expansion:
\begin{equation}\label{eq:schexp-mid}
    Q_{g^q}(w_{q,\epsilon})=6\mathcal{S}_4+6\int_{\partial B_\tau(0)} \big(\partial_\nu U_\epsilon\big) U_\epsilon \,d\sigma-\frac{6}{\nu^2} \int_{\p B_\tau^{g^q}(q)}\big(\partial_\nu G_q\big) G_q \,d\sigma_{g^q}+O\Big(\frac{\epsilon^2}{t^{3-b}}\Big)+O\Big(\frac{\epsilon^4}{\tau^4}\Big).
\end{equation}
We now focus on the two boundary integrals of \eqref{eq:schexp-mid}. At $\abs{z}=\tau$ one has
\begin{equation}\label{eq:estest1}
    \big(\partial_\nu U_\epsilon\big) U_\epsilon=-\frac{2c_4^2\epsilon^{-4}\tau}{\big(1+\epsilon^{-2}\tau^2\big)^3}=-2c_4^2\frac{\epsilon^2}{\tau^5}+6c_4^2\frac{\epsilon^4}{\tau^7}+\frac{\epsilon^2}{\tau^5}o\Big(\frac{\epsilon^2}{\tau^2}\Big) \quad \text{as $\frac{\epsilon}{\tau}\to0$}. 
\end{equation}
On $\partial B_\tau^{g^q}(q)$ instead there holds (recall $\tau=t^{\omega/\alpha}$)
\begin{align}
\notag
    \big(\partial_\nu G_q\big) G_q&=\Big(-\frac{2}{\tau^3}+ O(t^{-3})\Big)\Big(\frac{1}{\tau^2}+\frac{1}{4t^2}+O(t^{b-3})\Big) \\
    \label{eq:estest2}
    &=-\frac{2}{\tau^5}-\frac{1}{2\tau^3t^2}+ O(\tau^{-3}t^{b-3}), \quad \text{as $t\to0$.}
\end{align}
We also notice that, by virtue of \eqref{eq:cont-match}, one further has
\begin{align}
\notag
    \frac{1}{\nu}&=\frac{c_4\epsilon^{-1}}{(1+\epsilon^{-2}\tau^2)\Big(\frac{1}{\tau^2}+A_q\Big)}=\frac{c_4\epsilon}{(1+\epsilon^2 \tau^{-2})\Big(1+\tau^2 A_q\Big)} \\
    \label{eq:nu-exp}
    &=c_4\epsilon\Big(1-\tau^2 A_q-\frac{\epsilon^2}{\tau^2}+\epsilon^2 A_q+o\big(\tau^2 A_q^2\big)+o\Big(\frac{\epsilon^2}{\tau^2}\Big) \Big), \quad \text{as $\epsilon,t\to0$, $t\geq\epsilon^\alpha$}.
\end{align}
If we now recall the definition of $A_q$ (see Lemma \ref{lem:green-exp-manifold}) and assume that $2+2\alpha-4\omega>0$ (in addition to $1>\omega>\alpha>\frac{1}{2}$), then, for any $t\in[\epsilon^{\alpha},\delta/4]$, (and $\delta$ sufficiently small), one obtains 
\begin{align}\label{eq:estest3}
    \frac{1}{\nu}=c_4\epsilon\Big(1-\frac{\tau^2}{4t^2}+o\Big(\frac{\tau^2}{t^2}\Big)\Big), \quad \text{as $\epsilon\to0$.}
\end{align}
We can now substitute \eqref{eq:estest1}, \eqref{eq:estest2} and \eqref{eq:estest3} inside the boundary integrals of \eqref{eq:schexp-mid} (recall also that $d\sigma_{g^q}=\big(1+O(t^{-2}\tau^4)\big)$ because of \eqref{eq:vol-det-exp}) to get
\begin{align}
\notag
    \int_{\partial B_\tau(0)} \big(\partial_\nu U_\epsilon\big)& U_\epsilon \,d\sigma-\frac{1}{\nu^2} \int_{\p B_\tau^{g^q}(q)}\big(\partial_\nu G_q\big) G_q \,d\sigma_{g^q} \\
    \notag
    &=\int_{\partial B_\tau}\Big[6c_4^2\frac{\epsilon^4}{\tau^7}+\frac{1}{\tau^3}o\Big(\frac{\epsilon^4}{\tau^4}\Big)-\frac{c_4^2}{2}\frac{\epsilon^2}{\tau^3 t^2}-\frac{c_4^2}{4}\frac{\epsilon^2}{\tau t^4}+O\Big(\frac{\epsilon^2}{\tau^3t^{3-b}}\Big)+\epsilon^2o(\tau^{-3}t^{-2})\Big]\,d\sigma \\
    \label{eq:diffest-uet-1}
    &=-\pi^2c_4^2\frac{\epsilon^2}{t^2}\Big(1+\frac{\tau^2}{2t^2}+O(t^{b-1})+o_t(1)\Big) +O\Big(\frac{\epsilon^4}{\tau^4}\Big),
 \end{align}
where $o_t(1)\to0$ as $t\to0$. In particular, up to taking $\delta>0$ small enough, we may assume that, for $t<\delta$, one has
\begin{align*}
    1+\frac{\tau^2}{2t^2}+O(t^{b-1})+o_t(1)>\frac{3}{4}.
\end{align*}
Hence, for any $t\in[\epsilon^\alpha,\delta/4]$, by \eqref{eq:schexp-mid} and the above estimates we finally obtain
\begin{align*}
    Q_{g^q}(w_{q,\epsilon})<6\mathcal{S}_4-\frac{9}{2}\pi^2c_4^2\frac{\epsilon^2}{t^2}+O\Big(\frac{\epsilon^4}{\tau^4}\Big),
\end{align*}
which, by virtue of \eqref{eq:alphbar-assumpt}, implies that there exists $\bar{\epsilon}>0$ small enough so that
\begin{align*}
     Q_{g^q}(w_{q,\epsilon})<6\mathcal{S}_4 \quad \text{$\forall \epsilon<\bar{\epsilon}$, $\forall t\in[\epsilon^{\alpha},\delta/4]$. }
\end{align*}
In particular, when $t=\epsilon^\alpha$, one has the following expansion as $\epsilon\to0$:
\begin{align*}
     Q_{g^q}(w_{q,\epsilon})=6\mathcal{S}_4 -A \epsilon^{2(1-\alpha)}+o\big(\epsilon^{2(1-\alpha)}\big),
\end{align*}
where $A$ is given by \eqref{eq:A-formula}.
This completes the proof of Proposition \ref{prop:exp-sch-bub-collapse}.

\subsection{Proof of Proposition \ref{prop:interp-expansion}}\label{sec:interpolation}

This subsection is devoted to estimating the Yamabe quotient $Q_{g^q}$ of the convex combination $\psi_\lambda$ defined in \eqref{eq:psi-lambd} of the ``double bubble'' $u_{\epsilon,t}$ (see \eqref{eq:doubl-bub-test-M}) and the test function $w_{q,\epsilon}$ defined in \eqref{eq:sch-bub-test-M}.
\begin{remark}\label{rem:interp-exp-assumptions}
    In the following, we assume to be in the setting of Proposition \ref{prop:interp-expansion}; in particular, we will always suppose that $t=\epsilon^\alpha$ and $\tau=\epsilon^\omega$, where $\alpha,\omega$ satisfy \eqref{eq:alphbar-assumpt}.
\end{remark}

\noindent
\textbf{Notation.} Since here $t$ depends on $\epsilon$ and $q$ is fixed, we will write for simplicity $u_\epsilon$ and $w_\epsilon$ in place of $u_{\epsilon,t}$ and $w_{q,\epsilon}$ respectively. 

\medskip

To begin, we immediately notice that, on $M\backslash B_\delta^{g^q}(P)$, one has
\begin{align*}
    u_{\epsilon}\leq C \frac{\epsilon}{\delta^2}, \qquad \abs{\nabgq u_{\epsilon}}\leq C \frac{\epsilon}{\delta^3}.
\end{align*}
Similarly, using the estimate $\abs{G_q(x)}\leq\frac{C}{d_{g^q}(q,x)^2}$ we also get 
\begin{align*}
    \abs{w_{\epsilon}}\leq  \frac{C}{\delta^2}, \qquad \abs{\nabgq w_{\epsilon}}\leq  \frac{C}{\delta^3} \quad \text{on $M\backslash B_\delta^{g^q}(P)$}.
\end{align*}
From these estimates and the definition of $\psi_\lambda$, it easily follows that
\begin{align*}
    \int_{M\backslash B_\delta^{g^q}(P)}\big|a\abs{\nabgq\psi_\lambda}^2+R_{g^q}\psi_\lambda^2\big|\dmuq\leq C\Big(\frac{\epsilon^2}{\delta^{6}}+\frac{\epsilon^2}{\delta^4}\Big)\textit{Vol}_{g^q}\big(M\backslash B_\delta^{g^q}(P)\big)= O\Big(\frac{\epsilon^2}{\delta^6}\Big),
\end{align*}
\begin{align*}
    \int_{M\backslash B_\delta^{g^q}(P)}\abs{\psi_\lambda}^4\dmuq\leq C\frac{\epsilon^4}{\delta^{8}}\textit{Vol}_{g^q}\big(M\backslash B_\delta^{g^q}(P)\big) =O\Big(\frac{\epsilon^4}{\delta^8}\Big).
\end{align*}
As a consequence, if (as in Section \ref{sec:greenfunct}) we denote by $\bg=\sigma_P^* g^q$ the equivariant lift of $g^q$ (which extends $C^{1,1}$ at the origin by Corollary \ref{cor:metric-smooth-extension}) to $B_{2\delta}(0)\subset \R^4$ defined as in \eqref{eq:prj-def}, then we see that
\begin{align}\label{eq:int-yam-quotient}
 Q_{g^q}(\psi_\lambda)=\frac{\int_{B_\delta^{\bg}(0)}\big(a\abs{\nabgb \phi_\lambda}_{\bg}^2+R_{\bg}\phi_\lambda^2\big)\dmub+O\Big(\frac{\epsilon^2}{\delta^6}\Big)}{\sqrt{2}\Big(\int_{B_\delta^{\bg}(0)}\abs{\phi_\lambda}^4\dmub+O\Big(\frac
 {\epsilon^4}{\delta^8}\Big)\Big)^{\frac{1}{2}}},
\end{align}
where now $\phi_\lambda:=\psi_\lambda\circ\sigma_P$ denotes the equivariant lift of $\psi_\lambda$ to $B_{\delta}(0)\subset \R^4$. Let us also define
\begin{equation}\label{eq:bubw-def}
\bu:= u_\epsilon\circ \sigma_P, \qquad \qquad\bw:=w_\epsilon\circ\sigma_P;
\end{equation}
it is then clear that
\begin{equation*}
    \phi_\lambda=\lambda\bw +(1-\lambda) e^{-f/2}\bu,
\end{equation*}
where $f$ is the function $f_x$ defined in \eqref{eq:f_x-definition}.

Let $\sigma_P^{-1}(q)=\pm x=\pm t\mathbf{e}_1\in B_\delta$ (where $t=d_{g}(q,P)$). In order to compute \eqref{eq:int-yam-quotient}, we will split the integrals in the regions $B^{\bg}_{\tau}(\pm x)$ and $B_\delta(0)\backslash \big(B^{\bg}_{\tau}(x)\cup B_\tau^{\bg}(-x)\big)$.

\begin{remark}
    (a) Notice that the \acc bubble profiles'' of $\bu$ and $\bw$ are \emph{not equal} in the regions $B^{\bg}_{\tau}(\pm x)$. Indeed, the profile of $\bw$ is defined in normal coordinates for $\bar{g}$ centered at $\pm x$, while the one of $\bu$ is defined with respect to normal coordinates for $\tilde{g}$ centered at the origin.
    Nevertheless, this difference only generates higher order error terms, see Lemmas \ref{lem:error-estimates-interpolation} and \ref{lem:error-estimates-interpolation-boundary} below.

    \medskip

    \noindent
    (b) By definition of the conformal factor $f^q$ in \eqref{eq:conformal-factor-f^q}, we see that, in normal coordinates for $\bg$ centered at $\pm x$, one has the following expansion:
    \begin{equation*}%\label{eq:conf-fact-err}
        e^{-(f^q(y))/{2}}=1+O^{''}(\abs{y}^2), \qquad \forall \abs{y}\leq t/2,
    \end{equation*}
    where the error term does not depend on $t$. In particular, the conformal factor will generate an error of order $\tau^2$ at distance $\tau$ from $\pm x$.
\end{remark}

\subsubsection*{Denominator}
By an explicit calculation, we find 
\begin{align*}
    \int_{B_{\delta}^{\bg}}\abs{\phi_\lambda}^4\dmub&=\int_{B_{\delta}^{\bg}} \big[\lambda^4\abs{\bw}^4+4\lambda^3(1-\lambda)e^{-f/2}\bw^3\bu+6\lambda^2(1-\lambda)^2e^{-f}\bw^2 \bu^2 \\
    & \quad \qquad \qquad  +4\lambda(1-\lambda)^3 e^{-(3f)/2}\bw  \bu^3+(1-\lambda)^4e^{-2f}\abs{\bu}^4\big]\dmub.
\end{align*}
We can split this integral in the subregions $B_\tau^{\bg}(\pm x)$ and the complementary region. Then, applying Lemma \ref{lem:error-estimates-interpolation} (see also Remark \ref{rem:errorlemma-interp}) and exploiting the antipodal symmetry, we got
\begin{align*}
    \int_{B_{\tau}^{\bg}(x)\cup B_\tau^{\bg}(-x)}\abs{\phi_\lambda}^4 \dmub &=2\int_{B_\tau^{\bg}(x)}\abs{\phi_\lambda}^4 \dmub \\
    &=2\int_{B_\tau(x)}\Big[\Uet^4+4(1-\lambda)  \Uet^3\Uetm +6(1-\lambda)^2 \Uet^2 \Uetm^2 \\
    &\qquad +4(1-\lambda)^3 \Uet \Uetm^3 +(1-\lambda)^4 \Uetm^4\Big]\,dy +O(t^2).
\end{align*}
  By e.g. \eqref{eq:buest3}, the contribution coming from the integral of $\Uet^2 \Uetm^2$ is of order $o(\epsilon^2/t^2)$, and we can easily see that the same holds for the integrals over $B_\tau(x)$ of $\Uet \Uetm^3$ and $\Uetm^4$. As a consequence, we can rewrite the previous expression as
\begin{align}\label{eq:int-den-est-1}
     \int_{B_{\tau}^{\bg}(x)\cup B_\tau^{\bg}(-x)}\abs{\phi_\lambda}^4 \dmub =2\int_{B_\tau(x)} \Uet^4\,dy +8(1-\lambda)\int_{B_\tau(x)}\Uet^3 \Uetm\,dy+o\Big(\frac{\epsilon^2}{t^2}\Big).
\end{align}
Recalling that $\norm{\Uet}_{L^4(\R^4)}=1$ and arguing as in \eqref{eq:buest1}, we easily see that the first integral on the right-hand side is equal to $1+O(\epsilon^4/\tau^4)=1+o(\epsilon^2/t^2)$ (recall \eqref{eq:alphbar-assumpt}). As for the other one, we can use \eqref{eq:est-uetuetm-int} and \eqref{eq:buest2} with $s=\tau$. Substituting these expansions in \eqref{eq:int-den-est-1} and using \eqref{eq:A-formula}, we obtain
\begin{equation}\label{eq:int-den-exp-1}
     \int_{B_{\tau}^{\bg}(x)\cup B_\tau^{\bg}(-x)}\abs{\phi_\lambda}^4 \dmub=2+\frac{4}{3}(1-\lambda)\frac{A}{\mathcal{S}_4}\Big(\frac{\epsilon^2}{t^2}\Big)+o\Big(\frac{\epsilon^2}{t^2}\Big).
\end{equation}

At last, we consider the integral of $\abs{\phi_\lambda}^4$ inside the remaining region $B^{\bg}_\delta(0)\backslash\big(B^{\bg}_\tau(x)\cup B^{\bg}_\tau(x)\big)$. Using the estimates $\nu^{-1}=O(\epsilon)$ (cf. \eqref{eq:nu-exp}) and $\abs{{G}_q(\cdot)}\leq C d_{g^q}(q,\cdot)^{-2}$ for some $C>0$, one gets
\begin{equation}\label{eq:int-den-exp-2}
    \int_{B^{\bg}_\delta(0)\backslash\big(B^{\bg}_\tau(x)\cup B^{\bg}_\tau(-x)\big)}\abs{\phi_\lambda}^4\dmub=O\Big(\frac{\epsilon^4}{\tau^4}\Big)=o\Big(\frac{\epsilon^2}{t^2}\Big),
\end{equation}
where the last equality follows from \eqref{eq:alphbar-assumpt}.

\subsubsection*{Numerator}

Consider now the numerator of \eqref{eq:int-yam-quotient}, starting with the scalar curvature term. Recalling as above that the error terms of conformal factor and metric expansion generate higher order contributions, we have (see Lemma \ref{lem:error-estimates-interpolation} and Remark \ref{rem:errorlemma-interp})
\begin{align*}
    \int_{B_\delta^{\bg}} R_{\bg}\phi_\lambda^2\dmub=2\int_{B_\tau(x)}R_{\bg}\big( \Uet + (1-\lambda)\Uetm\big)^2\,dy+\int_{D} R_{\bg} \big(\lambda \bw +(1-\lambda)\bu\big)^2\,dy +O(t^2),
\end{align*}
where $D:=B_\delta(0)\backslash\big(B_\tau(x)\cup B_\tau(-x)\big)$. Recalling \eqref{eq:sc-curv-exp}, we easily see that 
\begin{align*}
    \int_{B_\tau(x)}\big\vert R_{\bg}\big( \Uet + (1-\lambda)\Uetm\big)^2\big\vert \,dy\leq C \int_{B_\tau(0)} t^{-2}\abs{y}^{2}\frac{\epsilon^{-2}}{(1+\epsilon^{-2}\abs{y}^2)^2}\,dy=O\Big(\frac{\epsilon^2 \tau^2}{t^2}\Big).
\end{align*}
On the other hand, using the estimates $\nu^{-1}=O(\epsilon)$ (cf. \eqref{eq:nu-exp}) and $\abs{{G}_q(\cdot)}\leq C d_{g^q}(q,\cdot)^{-2}$, one obtains
\begin{align*}
    \int_D\big\vert R_{\bg} \big(\lambda \bw +(1-\lambda)\bu\big)^2\big\vert\,dy\leq C \int_{B_\delta(0)\backslash B_\tau(0)}\frac{\epsilon^2}{\abs{y}^4}\,dy=O \big(\epsilon^2 \log(\tau)\big).
\end{align*}
The previous estimates together with \eqref{eq:alphbar-assumpt} now imply
\begin{align}\label{eq:scal-curv-interp-est}
    \int_{B_\delta^{\bg}} R_{\bg}\phi_\lambda^2\dmub=o\Big(\frac{\epsilon^2}{t^2}\Big).
\end{align}

\medskip

Next, we focus on the gradient term
\begin{align}
\notag
    \int_{B_\delta^{\bg}}\abs{\nabla_{\bg} \phi_\lambda}^2_{\bg}\dmub=&\int_{B_\delta^{\bg}}\big[\lambda^2\abs{\nabla_{\bg}\bw}^2+2\lambda(1-\lambda)e^{-f/2}\nabla_{\bg}(\bw)\cdot\nabla_{\bg}(\bu)+(1-\lambda)^2e^{-f}\abs{\nabla_{\bg}\bu}^2 \\
    \notag
    &\quad +2\lambda(1-\lambda)\nabla_{\bg}(\bw)\cdot\nabla_{\bg}(e^{-f/2})\bu+(1-\lambda)^2\bu^2\abs{\nabla_{\bg}(e^{-f/2})}^2 \\
    \label{eq:int-num-first-formula}
    &\quad \quad \quad+2(1-\lambda)^2\nabla_{\bg}(e^{-f/2})\cdot\nabla_{\bg}(\bu) \bu e^{-f/2}\big]\dmub.
\end{align}
As for the denominator, we first consider the integral in the subregions $B_\tau^{\bg}(\pm x)$. By looking at the definition of $f$ in Section \ref{sec:greenfunct}, we easily see that, in normal coordinates $\{y^i\}$ centered at $x$ (or $-x$), one has $\abs{\nabla_{\bg}(e^{-f(y)/2})}_{\bg}\leq C\abs{y}$, $\forall \abs{y}\leq \tau$, for some $C>0$ independent of $x$. As a consequence of this and using the definitions of $\bu,\bw$, we deduce that
\begin{gather}
\label{eq:int-errest-1}
    \int_{B_{\tau}^{\bg}(\pm x)}\abs{\nabla_{\bg}(\bw)\cdot\nabla_{\bg}(e^{-f/2})\bu}\dmub\leq C\int_{B_{\tau}(0)}\frac{\abs{y}^2\epsilon^{-4}}{(1+\epsilon^{-2}\abs{y}^2)^3}\,dy=O(\epsilon^2\log\epsilon), \\
    \label{eq:int-errest-2}
    \int_{B_{\tau}^{\bg}(\pm x)}\abs{\bu^2\abs{\nabla_{\bg}(e^{-f/2})}^2}\dmub\leq C\int_{B_{\tau}(0)}\frac{\abs{y}^2 \epsilon^{-2}}{(1+\epsilon^{-2}\abs{y}^2)^2}\,dy =O(\epsilon^2\tau^2), \\
    \label{eq:int-errest-3}
    \int_{B_{\tau}^{\bg}(\pm x)}\abs{\nabla_{\bg}(e^{-f/2})\cdot\nabla_{\bg}(\bu) \bu e^{-f/2}}\dmub\leq C\int_{B_{\tau}(0)}\frac{\abs{y}^2\epsilon^{-4}}{(1+\epsilon^{-2}\abs{y}^2)^3}\,dy =O(\epsilon^2\log\epsilon).
\end{gather}
From \eqref{eq:int-num-first-formula}, Lemma \ref{lem:error-estimates-interpolation} and these formulae, we obtain 
\begin{align}
\notag
    \int_{B_\tau^{\bg}(x)\cup B_\tau^{\bg}(-x)}\abs{\nabla_{\bg}& \phi_\lambda}^2_{\bg}\dmub=\int_{B_\tau^{\bg}(x)\cup B_\tau^{\bg}(-x)}\big[\lambda^2\abs{\nabla_{\bg}\bw}^2+2\lambda(1-\lambda)e^{-f/2}\nabla_{\bg}(\bw)\cdot\nabla_{\bg}(\bu) \\
    \notag
    & \qquad \qquad +(1-\lambda)^2e^{-f}\abs{\nabla_{\bg}\bu}^2\big]\dmub +O( \epsilon^2\log\epsilon) \\
    \notag
    & \qquad\quad\,\,\,\,   =2\int_{B_\tau(x)}\big[\lambda^2\abs{\nabla \Uet}^2+2\lambda(1-\lambda)\nabla(\Uet)\cdot \nabla (\Uet+\Uetm) \\
    \label{eq:int-num-1}
    &\qquad \qquad +(1-\lambda)^2\abs{\nabla(\Uet+\Uetm)}^2\big]\,dy+ O(t^2).
\end{align}
At this point, we observe that, for $y\in B_\tau(x)$, one has $\abs{\nabla \Uetm}\lesssim \epsilon t^{-3}$, therefore
\begin{align}\label{eq:uetm-int-err}
    \int_{B_\tau(x)}\abs{\nabla\Uetm}^2 \,dy=O\Big(\frac{\epsilon^2\tau^4}{t^6}\Big).
\end{align}
Substituting this formula and \eqref{eq:uetuetm-est} (for $\tau=s$) inside  \eqref{eq:int-num-1}, recalling Remark \ref{rem:interp-exp-assumptions} and the fact that $\Uet$ is radially symmetric, we find
\begin{align*}
    \int_{B_\tau^{\bg}(x)\cup B_\tau^{\bg}(-x)}\abs{\nabla_{\bg} \phi_\lambda}^2_{\bg}\dmub=2\int_{B_\tau(x)}\abs{\nabla \Uet}^2\,dy+o\Big(\frac{\epsilon^2}{t^2}\Big).
\end{align*}
We can now integrate by parts and use \eqref{eq:Y-eq-Rn} to obtain
\begin{align}\label{eq:int-num-int-int}
    \int_{B_\tau^{\bg}(x)\cup B_\tau^{\bg}(-x)}\abs{\nabla_{\bg} \phi_\lambda}^2_{\bg}\dmub=2\mathcal{S}_4+2\int_{\partial B_\tau(x)}(\partial_\nu \Uet) \Uet \,d\sigma +o\Big(\frac{\epsilon^2}{t^2}\Big).
\end{align}
Notice that the integral on the right-hand side of \eqref{eq:int-num-int-int} is of order $\epsilon^2/\tau^2$.
\medskip

We next consider the integral in the remaining region $D=B^{\bg}_\delta(0)\backslash\big(B_\tau^{\bg}(x)\cup B^{\bg}_\tau(-x)\big)$. To begin, by looking at the definition of $f$ in Section \ref{sec:greenfunct}, we see that $f\equiv 0$ outside $B_{t/2}(\pm x)$ and that $\abs{\nabla_{\bg}(e^{-f(y)/2})}_{\bg}\leq C\abs{y}$, $\forall \abs{y}\leq t/2$. As a consequence,  arguing as for \eqref{eq:int-errest-1}, \eqref{eq:int-errest-2}, \eqref{eq:int-errest-3} we infer 
\begin{align}\label{eq:int-form-interm}
    \int_{D}\abs{\nabla_{\bg} \phi_\lambda}^2_{\bg}\dmub=\int_D\big[\lambda^2\abs{\nabla_{\bg}\bw}^2+2\lambda(1-\lambda)\nabla_{\bg}(\bw)\cdot\nabla_{\bg}(\bu) 
      +(1-\lambda)^2\abs{\nabla_{\bg}\bu}^2\big] \dmub +o\Big(\frac{\epsilon^2}{t^2}\Big).
\end{align} 
We now proceed to expand \eqref{eq:int-form-interm}. By arguing similarly to what done for \eqref{eq:schest-4}, we easily see that
\begin{align}\label{eq:int-num-t1}
    \int_D\abs{\nabla_{\bg}\bw}^2\dmub= -\frac{2 }{\nu^2}\int_{\partial B^{\bg}_\tau(x)}\big(\partial_{\nu}G_x\big) G_x \,d\sigma_{\bg} +o\Big(\frac{\epsilon^2}{t^2}\Big),
\end{align}
where $\nu$ denotes the \emph{outward pointing} unit normal  and $G_x:=G_q\circ \sigma_P$ is the equivariant lift of $G_q$ to $B_\delta$. Notice that here we must also add the scalar curvature term and take into consideration the boundary integral on $\partial B_\delta$; however, both terms turn out to be of higher order (the boundary integral on $\partial B_\delta$ is manifestly $o(\epsilon^2/\delta^2)$, while, for the scalar curvature term, see \eqref{eq:scal-curv-interp-est}).

Consider now the second term on the RHS of \eqref{eq:int-form-interm}. Integrating by parts, exploiting the antipodal symmetry and Lemma  \ref{lem:error-estimates-interpolation-boundary}, we get
\begin{align}
\notag
    \int_D \nabla_{\bg}(\bw)&\cdot\nabla_{\bg}(\bu)\dmub=\frac{1}{\nu}\int_D(-\Delta_{\bg} G_x) (\bu)\dmub -\frac{2}{\nu}\int_{\partial B^{\bg}_\tau(x)}(\p_\nu G_x)(\bu)\,d\sigma+o\Big(\frac{\epsilon^2}{t^2}\Big) \\
    \label{eq:int-num-t2}
    &=-\frac{2c_4\epsilon}{\tau^2 \nu}\int_{\p B_\tau^{\bg}(x)}(\partial_\nu G_x)  \,d\sigma    -\frac{c_4\epsilon}{2t^2\nu}\int_{\p B_\tau^{\bg}(x)}(\p_\nu G_x) \,d\sigma +o\Big(\frac{\epsilon^2}{t^2}\Big),
\end{align}
where in the last equality we also used the definition of $G_x$ and the estimate \eqref{eq:scal-curv-interp-est} on the term involving the scalar curvature. Notice that the cutoff term in $\bw$ (see \eqref{eq:sch-bub-test-M}) generates an higher order contribution which can be estimated exactly as in \eqref{eq:ringrt2}.

Consider now the last term on the RHS of \eqref{eq:int-form-interm}: by formula \ref{eq:errlemma-est5} in Lemma \ref{lem:error-estimates-interpolation},
\begin{align*}
    \int_D \abs{\nabla_{\bg}\bu}^2\dmub=2\int_{D'} \abs{\nabla \Uet}^2\,dy +2\int_{D'} \nabla \Uet \cdot \nabla \Uetm\,dy+O\Big(\frac{\epsilon^2 t^2}{\tau^2}\Big),
\end{align*}
where $D':=B_\delta(0)\backslash\big(B_\tau(x)\cup B_\tau(-x)\big)$.
Using e.g. the equations between \eqref{eq:uetm-int-err} and \eqref{eq:int-num-int-int}, we deduce
\begin{align*}
    \int_{D'} \abs{\nabla \Uet}^2\,dy=\int_{\R^4}\abs{\nabla \Uet}^2\,dy-\int_{B_\tau(x)}\abs{\nabla \Uet}^2\,dy+o\Big(\frac{\epsilon^2}{t^2}\Big)=-\int_{\partial B_\tau(x)}(\partial_\nu \Uet)\Uet\,d\sigma+o\Big(\frac{\epsilon^2}{t^2}\Big).
\end{align*}
Similarly, by applying \eqref{eq:gradprod-integral} and \eqref{eq:uetuetm-est}, we got
\begin{align*}%\label{eq:prod-form-interp}
    \int_{D'}\nabla  \Uet \cdot \nabla \Uetm=\int_{B_\delta}\nabla \Uet \cdot \nabla\Uetm\,dy-2\int_{B_\tau(x)}\nabla \Uet \cdot \nabla\Uetm\,dy =\pi^2 c_4^2\frac{\epsilon^2}{t^2}+o\Big(\frac{\epsilon^2}{t^2}\Big),
\end{align*}
which, together with the previous equation, implies
\begin{align}\label{eq:int-num-t3}
    \int_D \abs{\nabla_{\bg}\bu}^2\dmub=2\pi^2 c_4^2\frac{\epsilon^2}{t^2}-2\int_{\partial B_\tau(x)}(\partial_\nu \Uet)\Uet\,d\sigma+o\Big(\frac{\epsilon^2}{t^2}\Big).
\end{align}
Substituting \eqref{eq:int-num-t1}, \eqref{eq:int-num-t2} and \eqref{eq:int-num-t3} inside \eqref{eq:int-form-interm} and recalling \eqref{eq:A-formula}, we have
\begin{align}
\notag
     \int_{D}\abs{\nabla_{\bg} \phi_\lambda}^2_{\bg}\dmub=&\frac{-2\lambda^2}{\nu^2}\int_{\p B_\tau^{\bg}(x)}(\p_\nu G_x) G_x\,d\sigma_{\bg}-2(1-\lambda)^2\int_{\p B_\tau(x)}(\p_\nu \Uet) \Uet\,d\sigma +(1-\lambda)^2\frac{A}{3}\frac{\epsilon^2}{t^2} \\
     \label{eq:int-num-int-ext}
     &-4\lambda(1-\lambda)\frac{c_4 \epsilon}{\tau^2\nu}\int_{\p B_\tau^{\bg} (x)}(\p_\nu G_x) \,d\sigma-\lambda(1-\lambda)\frac{c_4\epsilon}{t^2\nu}\int_{\p B_\tau^{\bg} (x)}(\p_\nu G_x) \,d\sigma+o\Big(\frac{\epsilon^2}{t^2}\Big).
\end{align}
Combining \eqref{eq:int-num-int-int} and \eqref{eq:int-num-int-ext}, we  obtain the following expression for the numerator of \eqref{eq:int-yam-quotient}:
\begin{align}
\notag
     \int_{B_\delta^{\bg}}\abs{\nabla_{\bg} \phi_\lambda}^2_{\bg}&\dmub=2\mathcal{S}_4+2\lambda^2\Big(\int_{\p B_\tau(x)}(\p_\nu \Uet)\Uet\,d\sigma-\frac{1}{\nu^2}\int_{\p B_\tau^{\bg}(x)}(\p_\nu G_x) G_x\,d\sigma_{\bg}\Big) \\
     \notag
     &+4\lambda(1-\lambda)\Big(\int_{\p B_\tau(x)} ( \p_{\nu} \Uet) \Uet\,d\sigma - \frac{c_4\epsilon}{\tau^2 \nu}\int_{\p B_{\tau}^{\bg}(x)} (\p_\nu G_x) \,d\sigma_{\bg}\Big)\\
     \label{eq:num-interm-form}
     &-\lambda(1-\lambda)\Big(\frac{c_4\epsilon}{t^2 \nu}\int_{\p B_\tau^{\bg}(x)}(\p_\nu G_x)\,d\sigma_{\bg}\Big) +(1-\lambda)^2\frac{A}{3}\frac{\epsilon^2}{t^2}+o\Big(\frac{\epsilon^2}{t^2}\Big).
\end{align}
By looking at \eqref{eq:green-manif-expansion}, \eqref{eq:estest1} and \eqref{eq:estest3}, one easily sees that
\begin{align}
\notag
    \int_{\p B_\tau(x)} &( \p_{\nu} \Uet) \Uet\,d\sigma - \frac{c_4\epsilon}{\tau^2 \nu}\int_{\p B_{\tau}^{\bg}(x)} (\p_\nu G_x)\,d\sigma_{\bg}  \\
    \label{eq:diffest-uet-2}
    &=2\pi^2 \tau^3\Big[-2c_4^2\frac{\epsilon^2}{\tau^5}-\Big(c_4\frac{\epsilon}{\tau^2}\Big)c_4\epsilon\Big(1-\frac{\tau^2}{4t^2}\Big)\Big(-\frac{2}{\tau^3}\Big)\Big]+o\Big(\frac{\epsilon^2}{t^2}\Big)=-\pi^2 c_4^2\frac{\epsilon^2}{t^2}+o\Big(\frac{\epsilon^2}{t^2}\Big).
\end{align}
Similarly, we also have
\begin{align}\label{eq:diffest-uetm}
    \frac{c_4\epsilon}{t^2 \nu}\int_{\p B_\tau^{\bg}(x)}(\p_\nu G_x)\,d\sigma_{\bg}=\frac{2\pi^2\tau^3c_4\epsilon}{t^2}\Big[c_4\epsilon\Big(1-\frac{\tau^2}{4t^2}\Big)\Big(-\frac{2}{\tau^3}\Big)\Big]+o\Big(\frac{\epsilon^2}{t^2}\Big)=-4\pi^2 c_4^2\frac{\epsilon^2}{t^2}+o\Big(\frac{\epsilon^2}{t^2}\Big).
\end{align}
Finally, substituting \eqref{eq:diffest-uet-1}, \eqref{eq:diffest-uet-2} and \eqref{eq:diffest-uetm} inside \eqref{eq:num-interm-form} and recalling \eqref{eq:A-formula}, we obtain
\begin{align}\label{eq:int-num-finalexp}
    \int_{B_\delta^{\bg}}\abs{\nabla_{\bg} \phi_\lambda}^2_{\bg}\dmub=2\mathcal{S}_4+(1-2\lambda)\frac{A}{3} \frac{\epsilon^2}{t^2}+o\Big(\frac{\epsilon^2}{t^2}\Big).
\end{align}

\subsubsection*{Conclusion}

Substituting \eqref{eq:int-den-exp-1}, \eqref{eq:int-den-exp-2} and \eqref{eq:int-num-finalexp} inside \eqref{eq:int-yam-quotient} and recalling that $a=6$ and $t=\epsilon^\alpha$, we find
\begin{align*}
     Q_{g^q}(\psi_\lambda)&=\frac{1}{\sqrt{2}}\frac{12\mathcal{S}_4+2(1-2\lambda)A \epsilon^{2(1-\alpha)}+o\big(\epsilon^{2(1-\alpha)}\big)}{\Big(2+\frac{4}{3}(1-\lambda)A\mathcal{S}_4^{-1}\epsilon^{2(1-\alpha)}+o\big(\epsilon^{2(1-\alpha)}\big)\Big)^{1/2}} \\
     &=\Big(6\mathcal{S}_4+(1-2\lambda)A \epsilon^{2(1-\alpha)}+o\big(\epsilon^{2(1-\alpha)}\big)\Big)\Big(1-\frac{1}{3}(1-\lambda)A\mathcal{S}_4^{-1}\epsilon^{2(1-\alpha)}+o\big(\epsilon^{2(1-\alpha)}\big)\Big) \\
     &=6\mathcal{S}_4-A\epsilon^{2(1-\alpha)}+o\big(\epsilon^{2(1-\alpha)}\big), \qquad \text{as $\epsilon\to0$}.
\end{align*}
This proves Proposition \ref{prop:interp-expansion}.

\subsubsection*{Error estimates on bubble profiles}

The next two lemmas collect some estimates on the behavior of integrals involving powers of $\bu, \bw$, or their gradients, which were employed in the computations above. The takeaway is that, in normal coordinates for $\bg$ centered at $\pm x$, we are always able to substitute $\bu$ with an \acc exact'' double bubble $\Uethat$ when computing the integrals.

\begin{lemma}\label{lem:error-estimates-interpolation}
    Let $\bu, \bw$ be defined as in \eqref{eq:bubw-def}, let $x=t\mathbf{e}_1$ and let $t,\tau$ be as in Proposition \ref{prop:interp-expansion}.
    Then the following estimates hold:
    \begin{gather}
    \label{eq:errlemma-est1}
        \int_{B_\tau^{\bg}(x)}\bu^4\dmub=\int_{B_\tau(x)} \Uethat^4(y)\,dy+O(t^2), \\
        \label{eq:errlemma-est2}
        \int_{B_\tau^{\bg}(x)}\bu^3 \bw\dmub=\int_{B_\tau(x)} \Uethat^3(y) \Uet(y)\,dy+O(t^2), \\
        \label{eq:errlemma-est3}
        \int_{B_\tau^{\bg}(x)} \abs{\nabla_{\bg}\bu}^2\dmub=\int_{B_\tau(x)}\abs{\nabla \Uethat(y)}^2\,dy+O(t^2), \\
        \label{eq:errlemma-est4}
        \int_{B_\tau^{\bg}(x)} \nabla_{\bg} \bu\cdot  \nabla_{\bg} \bw \dmub=\int_{B_\tau(x)} \nabla \Uethat \cdot \nabla \Uet \,dy+O(t^2), \\
        \label{eq:errlemma-est5}
        \int_{D}\abs{\nabla_{\bg}\bu}^2\dmub =\int_{D'}\abs{\nabla \Uethat}^2\,dy +O\Big(\frac{\epsilon^2 t^2}{\tau^2}\Big),
    \end{gather}
    where $\Uethat=\widehat{U}_{\epsilon,t\mathbf{e}_1}$ is the double-bubble defined in \eqref{eq:double-bubble}, $D=B_\delta(0)\backslash \big(B_\tau^{\bg}(x)\cup B_\tau^{\bg}(-x)\big)$ and $D'=B_\delta(0)\backslash \big(B_\tau(x)\cup B_\tau(-x)\big)$.
\end{lemma}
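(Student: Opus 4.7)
The plan is to reduce everything to computations in the Euclidean coordinates $y$ on $B_{2\delta}(0)\subset\R^4$, which are simultaneously normal coordinates for $\tilde{g}$ at the origin by \eqref{eq:lift-metr-exp}. For $y\in B_\tau^{\bar{g}}(x)$ with $\tau=\epsilon^\omega\ll t=\epsilon^\alpha$ one has $|y|\sim t$, so \eqref{eq:lift-metr-exp} yields $\tilde{g}_{ij}(y)=\delta_{ij}+O(t^2)$ uniformly. Using the definition \eqref{eq:f_x-definition} together with the curvature bounds recalled in Section~\ref{subseq-parametrix}, the polynomial $f_x(y)$ is $O(\tau^2)+O(\tau^3/t)=O(\tau^2)$ on $B_\tau^{\bar{g}}(x)$, hence $e^{f_x(y)}=1+O(t^2)$. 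Combining these I obtain $\bar{g}_{ij}(y)=\delta_{ij}+O(t^2)$ and $d\mu_{\bar{g}}(y)=(1+O(t^2))\,dy$ throughout $B_\tau^{\bar{g}}(x)$; consequently the symmetric difference $B_\tau^{\bar{g}}(x)\triangle B_\tau(x)$ is contained in a Euclidean annulus of thickness $O(\tau t^2)$ around $\partial B_\tau(x)$.

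For \eqref{eq:errlemma-est1}--\eqref{eq:errlemma-est2}, I first note that $\chi_\delta\equiv 1$ near $\pm x$, so $\bar{u}=\widehat{U}_{\epsilon,t}$ as a function of $y$. Pulling the Jacobian $1+O(t^2)$ out reduces the claim to estimating the boundary correction coming from the annulus, which is bounded via the pointwise estimate $\widehat{U}_{\epsilon,t}(y)\lesssim \epsilon/\tau^2$ for $|y-x|=\tau$ by $O(\epsilon^4 t^2/\tau^4)=o(t^2)$ thanks to $\omega<1$. The factor $\bar{w}$ in \eqref{eq:errlemma-est2} is treated by observing that the $\bar{g}$-exponential map at $x$ differs from the Euclidean translation by a cubic remainder (since $\bar{g}$ is built so that $R_{\bar{g}}(x)$ and $R^{\bar{g}}_{ij}(x)$ vanish), so $\bar{w}(y)=\Uet(y)$ up to corrections that integrate to higher order. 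The gradient estimates \eqref{eq:errlemma-est3}--\eqref{eq:errlemma-est4} then follow by the same scheme after writing $|\nabla_{\bar{g}} v|^2_{\bar{g}}=(1+O(t^2))|\nabla v|^2$, since the leading integrals $\int_{B_\tau(x)}|\nabla\widehat{U}_{\epsilon,t}|^2\,dy$ and $\int_{B_\tau(x)}\nabla\widehat{U}_{\epsilon,t}\cdot\nabla\Uet\,dy$ are $O(1)$, so the multiplicative $O(t^2)$ factor produces exactly the $O(t^2)$ error claimed.

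For the exterior estimate \eqref{eq:errlemma-est5} the bound $|\bar{g}_{ij}(y)-\delta_{ij}|\le C|y|^2$ is valid pointwise on all of $D$, and on the subregion where $f_{\pm x}\ne 0$ one has $|y|\sim t$ so the same estimate holds. I would then write
\[
\int_D|\nabla_{\bar{g}}\bar{u}|^2\,d\mu_{\bar{g}}-\int_{D'}|\nabla\widehat{U}_{\epsilon,t}|^2\,dy=\int_D|\nabla\widehat{U}_{\epsilon,t}|^2\,O(|y|^2)\,dy+\mathcal{E}_{\partial},
\]
where $\mathcal{E}_\partial$ is the contribution of the $\bar{g}$-to-Euclidean annulus. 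Splitting $D$ as in \eqref{eq:int-split-1}, the dominant contribution to the first term comes from $\tau<|y-x|<t$, where $|\nabla\widehat{U}_{\epsilon,t}|^2\sim \epsilon^2/|y-x|^6$ and $|y|\lesssim t$, giving exactly $O(\epsilon^2 t^2/\tau^2)$; the same bubble pointwise bound applied on the annulus of width $O(\tau t^2)$ around $\partial B_\tau(x)$ shows $\mathcal{E}_\partial=O(\epsilon^2 t^2/\tau^2)$ as well.

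The main obstacle I anticipate is not conceptual but organisational: tracking the three interlinked sources of error (metric remainder, conformal factor, and the change of coordinates between Euclidean and $\bar{g}$-normal coordinates at $x$) uniformly in $\epsilon,t,\tau,\delta$, and in particular verifying rigorously that the cubic coordinate correction for $\bar{w}$ does not compete with the leading $O(\epsilon^2/t^2)$ contributions identified in Section~\ref{sec:doub-bubble}.
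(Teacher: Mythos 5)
Your overall scheme is the mirror image of the paper's: the paper works in $\bg$-normal coordinates at $x$, where $\bw$ has an exact bubble profile and $\bu$ becomes a ``perturbed'' double bubble (their \eqref{eq:perturbed-bubble}), while you work in the Euclidean/$\tg$-coordinates, where $\bu$ is exact and $\bw$ is perturbed. For \eqref{eq:errlemma-est1}, \eqref{eq:errlemma-est3} and \eqref{eq:errlemma-est5} your bookkeeping (volume element and metric deviation $1+O(t^2)$ at scale $|y|\sim t$, the boundary annulus of thickness $O(\tau t^2)$, and the control of \eqref{eq:errlemma-est5} by $\int_{D'}|\nabla\Uethat|^2|y|^2\,dy$ with the dominant region $\tau<|y-x|<t$) matches the paper and is fine.

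The gap is in \eqref{eq:errlemma-est2} and \eqref{eq:errlemma-est4}, i.e.\ exactly where the two profiles must be compared, and it is twofold. First, the mechanism you invoke is wrong: the fact that $R_{\bg}(x)=0$ and $R^{\bg}_{ij}(x)=0$ normalizes the Taylor expansion of $\bg$ \emph{in its own normal coordinates} (this is what gives \eqref{eq:vol-det-exp}--\eqref{eq:gbar-metr-expansion}); it does not make the $\bg$-exponential map at $x$ agree with the Euclidean translation up to a cubic remainder. In the $y$-coordinates one has $\bg_{ij}(x)=\delta_{ij}+O(t^2)$ and $\Gamma^{\bg}(x)=O(t)$ (the conformal factor $f_x$ vanishes to second order at $x$, but $\partial\tg(x)=O(t)$ survives), so the transition map has a linear discrepancy of relative size $O(t^2)$ and a quadratic term of size $O(t)$; what saves you is only that on $B_\tau(x)$ these combine into a multiplicative radial distortion $1+O(t^2)+O(t\tau)=1+O(t^2)$ (using $\omega>\alpha$), which is the content of the comparison $d_{\bg}(x,p)=|x-p|\big(1+O''(t^2)\big)$ used in the paper. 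Second, even granting this, you never estimate the resulting corrections: ``$\bw(y)=\Uet(y)$ up to corrections that integrate to higher order'' is precisely the statement to be proved, and it is the bulk of the paper's proof — after \eqref{eq:perturbed-bubble} they show by explicit splittings (over $B_\epsilon$ and $B_\tau\setminus B_\epsilon$, and with the second bubble centered at $-x$ entering through $|z+2x|^2(1+O''(t^2))$) that replacing the bubble argument $|z|^2$ by $|z|^2(1+O''(t^2))$ changes each of the integrals in \eqref{eq:errlemma-est2}--\eqref{eq:errlemma-est4} by $O(t^2)$ (or less). In your setup the analogous computation, e.g.
\begin{equation*}
\int_{B_\tau(x)}\Uet^3\,\Big|\,U_\epsilon\big(|y-x|(1+O(t^2))\big)-\Uet(y)\Big|\,dy
\;\lesssim\; t^2\int_{\R^4}\frac{|v|^2}{(1+|v|^2)^{5}}\,dv \;=\;O(t^2),
\end{equation*}
and its gradient counterpart for \eqref{eq:errlemma-est4}, must be carried out; you yourself flag this as the unresolved obstacle, so as written the proof of the two mixed estimates is missing, even though your route would go through once these estimates are supplied.
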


\begin{proof}
    We start by observing that the expansions \eqref{eq:metr-eucl-exp-lem-1} for $\tilde{g}$ at $0$ (which holds in $B_\delta$) and \eqref{eq:gbar-metr-expansion} for $\bg$ at $x$ (which holds in $B_{t/2}$) imply 
    \begin{gather*}
        d_{\tg}(x,p)=\abs{x-p}(1+O^{''}(t^2)) \qquad \forall p\in B_{t/2}(x), \\
        d_{\bg}(x,p)=\abs{x-p}(1+O^{''}(\tau^2)) \qquad \forall p\in B_{2\tau}(x), 
    \end{gather*}
    where $\abs{\, \cdot \,}$ denotes the Euclidean distance and $O^{''}(t^2)$ denotes an error term, say $\Psi$, such that $\abs{\Psi}\leq Ct^2$ and $\abs{\nabla^k \Psi}\leq Ct^{2-k}$ for $k=1,2$, where $C$ does not depend upon $t$ or $\tau$. Moreover, by looking at the definition of $\bg$, \eqref{eq:gbar-definition}, we also see that
    \begin{align*}
        d_{\tg}(x,p)=d_{\bg}(x,p)+O^{''}\big(d_{\bg}(x,p)^3\big), \qquad d_{\bg}(x,p)=d_{\tg}(x,p)+O^{''}\big(d_{\tg}(x,p)^3\big), \qquad \forall p \in B_{2\tau}(x).
    \end{align*}

To begin, we notice that \eqref{eq:errlemma-est1} immediately follows once we put ourselves in Euclidean coordinates at $x$ and recall the expansion \eqref{eq:vol-det-exp} for the volume element (the error term is even smaller than the one in \eqref{eq:errlemma-est1}).
Let us now focus on \eqref{eq:errlemma-est2}; the above estimates imply that, in $\bg$-normal coordinates $\{z^i\}$ centered at $x$, one has
\begin{align}\label{eq:perturbed-bubble}
    \bu(z)=\frac{c_4 \epsilon^{-1}}{1+\epsilon^{-2}\abs{z}^2(1+O^{''}(t^2))}+\frac{c_4\epsilon^{-1}}{1+\epsilon^{-2}\abs{z+2x}^2(1+O^{''}(t^2))}.
\end{align}
Notice that, in general, if $z$ corresponds to a point $p\in B_\tau(x)$, then $\abs{z+2x}\not=d_{\bg}(p,-x)$ as the distance function is not smooth at the origin; nevertheless, we still have the above expression for $\bu$ in $\bg$-normal coordinates at $x$. Using \eqref{eq:perturbed-bubble}, a change of variables and the definition of $\bw$, we have
\begin{align*}
    \int_{B_\tau^{\bg}(x)}&\bu^3 \bw\dmub \\ 
     =&\int_{B_\tau(x)}\bigg[\frac{c_4 \epsilon^{-1}}{1+\epsilon^{-2}\abs{y-x}^2(1+O(t^2))}+\frac{c_4\epsilon^{-1}}{1+\epsilon^{-2}\abs{y+x}^2(1+O(t^2))}\bigg]^3\frac{c_4\epsilon^{-1}\,dy}{1+\epsilon^{-2}\abs{y-x}^2}+O(t^2)
\end{align*}
(the slight change of domain is irrelevant for the next estimates). It is  sufficient to estimate the difference between each term in the above integral and their \acc exact'' counterpart. For instance, 
\begin{align*}
    \int_{B_\tau(x)}\bigg[&\frac{ \epsilon^{-4}}{\big(1+\epsilon^{-2}\abs{y-x}^2(1+O(t^2))\big)^3\big(1+\epsilon^{-2}\abs{y-x}^2\big)}-\frac{\epsilon^{-4}}{\big(1+\epsilon^{-2}\abs{y-x}^2\big)^4}\bigg]\,dy  \\
    &=\int_{B_\tau(0)}\frac{\epsilon^{-4}\big(1+\epsilon^{-2}\abs{y}^2\big)^3-\epsilon^{-4}\big(1+\epsilon^{-2}\abs{y}^2(1+O(t^2))\big)^3}{\big(1+\epsilon^{-2}\abs{y}^2(1+O(t^2))\big)^3\big(1+\epsilon^{-2}\abs{y}^2\big)^4}\,dy \\
    &=O(t^2\epsilon^{-4})\int_{B_\tau(0)}\frac{\big(\epsilon^{-6}\abs{y}^6+3\epsilon^{-2}\abs{y}^2+3\epsilon^{-4}\abs{y}^4\big)}{\big(1+\epsilon^{-2}\abs{y}^2\big)^7}\,dy,
\end{align*}
    and  we can now split the integral in the regions $B_\epsilon(x)$ and $B_\tau(x)\backslash B_\epsilon(x)$ and use basic estimates to show that the quantity is an $O(t^2)$. Similarly, we got
    \begin{align*}
        \int_{B_\tau(0)}\bigg[&\frac{\epsilon^{-4}}{\big(1+\epsilon^{-2}\abs{y}^2(1+O(t^2))\big)^2\big(1+\epsilon^{-2}\abs{y+2x}^2(1+O(t^2))\big)(1+\epsilon^{-2}\abs{y}^2)} \\
        & \qquad \quad\qquad\qquad\qquad\qquad\qquad\qquad-\frac{\epsilon^{-4}}{(1+\epsilon^{-2}\abs{y}^2)^3(1+\epsilon^{-2}\abs{y+2x}^2)}\bigg]\,dy \\[0.5ex]
=&O(t^2\epsilon^{-4})\int_{B_\tau(0)}\frac{\epsilon^{-2}\abs{y+2x}^2+2\epsilon^{-2}\abs{y}^2+2\epsilon^{-4}\abs{y}^2\abs{y+2x}^2+\epsilon^{-4}\abs{y}^4+\epsilon^{-6}\abs{y+2x}^2\abs{y}^4}{(1+\epsilon^{-2}\abs{y}^2)^5(1+\epsilon^{-2}\abs{y+2x}^2)^2}\,dy \\
=&O(t^{-2})\int_{B_\tau(0)}\frac{\epsilon^{-2}t^2+\epsilon^{-2}\abs{y}^2+\epsilon^{-4}\abs{y}^2 t^2+\epsilon^{-4}\abs{y}^4+\epsilon^{-6}\abs{y}^4t^2}{(1+\epsilon^{-2}\abs{y}^2)^5}\,dy.
    \end{align*}
    As above, we can split the integral in the regions $B_\epsilon(x)$ and $B_\tau\backslash B_\epsilon(x)$ and use basic estimates to prove that this quantity is an $O(\epsilon^2)$ (i.e. $O(\epsilon^2/t^2) O(t^2)$, cf. \eqref{eq:F3-estimate-2}).
    Arguing in the exact same way, we can show that all the remaining differences are of even higher order, in particular thay are $O(t^2)$. This shows \eqref{eq:errlemma-est2}.

    The estimates \eqref{eq:errlemma-est3} and \eqref{eq:errlemma-est4} can be obtained in the same way by starting from \eqref{eq:perturbed-bubble} and estimating the differences between \acc perturbed'' functions and regular one.

    Finally, to prove estimate \eqref{eq:errlemma-est5}, we can put ourselves in geodesic coordinates for $\bar{g}$ centered at $0$ and observe that, by \eqref{eq:estrem-1}, the difference between the integrals in \eqref{eq:errlemma-est5} can be controlled by 
    \begin{equation*}
        \int_{D'}\abs{\nabla \Uethat}^2\abs{y}^2\,dy.
    \end{equation*}
    By \eqref{eq:estrem2}, this quantity is at least of order  $O(t^2)$, which is already enough for our purposes. However, we can be more precise and split $D'$ into the regions $B_{t/2}(\pm x)\backslash B_\tau(\pm x)$, $B_{2t}(0)\backslash B_{t/2}(\pm x)$ and $B_\delta \backslash B_{2t}(0)$ and estimate the integrand in each component in order to readily deduce \eqref{eq:errlemma-est5}. 
\end{proof}

\begin{remark}\label{rem:errorlemma-interp}
    With the same argument as above, we can also  deduce similar estimates for other combinations of $\bu$ and $\bw$ in $B^{\bg}_\tau(x)$ or $D$, like $\bu^2 \bw^2$, $\bw^3 \bu$ or $\bu \bw$, which are also employed in the estimates above. In all cases, the order of the error term is equal to $t^2$ times the order of the integral. 
\end{remark}

The estimate in the next Lemma is less refined, but still sufficient for our purposes:

\begin{lemma}\label{lem:error-estimates-interpolation-boundary}
    Let $\bu$ be defined as in \eqref{eq:bubw-def} and let $x=t\mathbf{e}_1$, $G_x:=G_q\circ \sigma_P$ and $\tau,t$ as in Proposition \ref{prop:interp-expansion}. Then one has
    \begin{align*}%\label{eq:boundary-error-estimates-interp}
        \frac{1}{\nu}\int_{\p B_\tau^{\bg}(x)}(\p_\nu G_x)\Big(\bu -\frac{c_4\epsilon}{\tau^2}-\frac{c_4\epsilon}{4t^2}\Big)\,d\sigma_{\bg}=o\Big(\frac{\epsilon^2}{t^2}\Big).
    \end{align*}
\end{lemma}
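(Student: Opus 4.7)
The plan is as follows. Since $\tau \ll t/2 < \delta/4$, the cutoff $\chi_\delta$ in the definition of $\bu$ is identically $1$ on $B^{\bg}_\tau(x)$, so $\bu = \Uet + \Uetm$ there (in the lifted variables). Moreover $1/\nu$ does not depend on the boundary point and can be factored out. I would therefore write
\[
\bu - \frac{c_4\epsilon}{\tau^2} - \frac{c_4\epsilon}{4t^2} = \Big(\Uet - \frac{c_4\epsilon}{\tau^2}\Big) + \Big(\Uetm - \frac{c_4\epsilon}{4t^2}\Big),
\]
noticing that the two constants subtracted off are precisely the leading values of the two bubbles at $\bg$-distance $\tau$ from $x$ (resp.\ roughly $2t$ from $-x$), so each difference is a \emph{small correction} to zero and it is only a matter of quantifying this.

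Using the expression \eqref{eq:perturbed-bubble} for $\bu$ in $\bg$-normal coordinates $\{z^i\}$ at $x$, we have $|z|=\tau$ on $\p B^{\bg}_\tau(x)$, and expanding each bubble would give, uniformly on the boundary,
\[
\Uet - \frac{c_4\epsilon}{\tau^2} = O\!\Big(\frac{\epsilon^3}{\tau^4}\Big) + O\!\Big(\frac{\epsilon\, t^2}{\tau^2}\Big),
\qquad
\Uetm - \frac{c_4\epsilon}{4t^2} = -\frac{c_4\epsilon\,(z\cdot x)}{4t^4} + O\!\Big(\frac{\epsilon\tau^2}{t^4}\Big) + O\!\Big(\frac{\epsilon^3}{t^4}\Big) + O(\epsilon),
\]
where the second identity comes from writing $|z+2x|^2=4t^2+4z\cdot x+\tau^2$, Taylor-expanding $(4t^2+\xi)^{-1}$ with $\xi=O(\tau/t)\cdot t^2$, and estimating quadratic remainders; the linear term satisfies $|z\cdot x|\le \tau t$ pointwise.

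The second ingredient is that the weight $(\p_\nu G_x)/\nu\,d\sigma_{\bg}$ has total variation $O(\epsilon)$: by Lemma~\ref{lem:green-exp-manifold} one has $|\p_\nu G_x|\le 2/\tau^3+C/t^3\lesssim 1/\tau^3$ (since $\tau<t$), by \eqref{eq:estest3} one has $|1/\nu|=O(\epsilon)$, and $|\p B^{\bg}_\tau(x)|_{\bg}\asymp\tau^3$. Multiplying each pointwise estimate above by this weight and exploiting the assumptions \eqref{eq:alphbar-assumpt}, every term comes out as $o(\epsilon^2/t^2)$: the critical contribution $\epsilon^4/\tau^4$ from the first bubble equals $\epsilon^{4-4\omega}$, which is smaller than $\epsilon^{2-2\alpha}$ \emph{exactly} when $2+2\alpha-4\omega>0$; the term $\epsilon^2t^2/\tau^2$ is $o(\epsilon^2/t^2)$ because $\omega<1<2\alpha$; the linear term, bounded by $\epsilon^2\tau/t^3$, is $o(\epsilon^2/t^2)$ since $\omega>\alpha$; and $\epsilon^4/t^4=o(\epsilon^2/t^2)$ because $\alpha<1$.

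The main obstacle is pure exponent bookkeeping: the term $\epsilon^4/\tau^4$ is close to being too large, and it is controlled \emph{only} by the inequality $2+2\alpha-4\omega>0$ — which is in fact the precise reason this inequality was imposed back in \eqref{eq:alphbar-assumpt}. One could instead exploit approximate Euclidean spherical symmetry to make the linear term $z\cdot x$ integrate to something of order $\tau^6 t$ rather than $\tau^4 t$, yielding a sharper estimate, but the crude pointwise bound $|z\cdot x|\le \tau t$ already produces the required $o(\epsilon^2/t^2)$ under the standing assumptions, so no such refinement is needed.
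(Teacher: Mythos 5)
Your proposal is correct and follows essentially the same route as the paper's proof: the same decomposition of $\bu$ via \eqref{eq:perturbed-bubble}, the same pointwise estimates of the two bubble differences on $\partial B_\tau^{\bg}(x)$, the same $O(\epsilon)$ bound on the total mass of $\nu^{-1}(\partial_\nu G_x)\,d\sigma_{\bg}$, and the same exponent bookkeeping, in which the only critical term $\epsilon^4/\tau^4$ is handled exactly by the assumption $2+2\alpha-4\omega>0$ (a point you make even more explicitly than the paper does). The additional $O(\epsilon)$ term you record from the $O(t^2)$ metric correction, which you do not tick off in the final count, contributes $\epsilon\cdot O(\epsilon)=O(\epsilon^2)=o(\epsilon^2/t^2)$ trivially, so nothing essential is missing.
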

\begin{proof}
    Recalling \eqref{eq:perturbed-bubble}, we compute
    \begin{equation*}
        \frac{c_4\epsilon^{-1}}{1+\epsilon^{-2}\tau^2(1+O(t^2))}-\frac{c_4\epsilon}{\tau^2}=-\frac{c_4\epsilon+c_4\epsilon^{-1}\tau^2O(t^2)}{\tau^2\big(1+\epsilon^{-2}\tau^2(1+O(t^2))\big)}\simeq \frac{\epsilon^3}{\tau^4}+\frac{\epsilon t^2}{\tau^2}=O\Big(\frac{\epsilon t^2}{\tau^2}\Big),
    \end{equation*}
    and, for $\abs{y}=\tau$, 
    \begin{equation*}
        \frac{c_4\epsilon^{-1}}{1+\epsilon^{-2}\abs{y+2x}^2(1+O(t^2))}-\frac{c_4\epsilon}{4t^2}\simeq \frac{\epsilon^3}{t^4}+\frac{\epsilon \tau}{t^3}=O\Big(\frac{\epsilon\tau}{t^3}\Big).
    \end{equation*}
    Therefore, recalling \eqref{eq:estest3} and that $\abs{\p_\nu G_x}\leq C/\tau^3$ in $\p B_\tau^{\bg}(x)$, we get
    \begin{align*}
         \frac{1}{\nu}\int_{\p B_\tau^{\bg}(x)}(\p_\nu G_x)\Big(\bu -\frac{c_4\epsilon}{\tau^2}-\frac{c_4\epsilon}{4t^2}\Big)\,d\sigma_{\bg}\simeq \epsilon\Big[ O\Big(\frac{\epsilon t^2}{\tau^2}\Big)+O\Big(\frac{\epsilon\tau}{t^3}\Big)\Big]=o\Big(\frac{\epsilon^2}{t^2}\Big),
    \end{align*}
    which completes our proof.
\end{proof}

\printbibliography[heading=bibintoc]

\end{document}